\newcommand{\rl}{{\mathbb{R}}}
\newcommand{\cx}{{\mathbb{C}}}
\newcommand{\id}{{\mathbb{I}}}
\newcommand{\dbar}{\overline{\partial}}
\newcommand{\abs}[1]{\left|{#1}\right|}
\newcommand{\norm}[1]{\left\|{#1}\right\|}
\newcommand{\tensor}{\otimes}
\newcommand{\csor}{\widehat{\otimes}}
\newcommand{\bd}{\mathsf{b}}
\newtheorem{theorem}{Theorem}
\newtheorem{lemma}{Lemma}
\newtheorem{prop}{Proposition}
\newtheorem{cor}{Corollary}
\newtheorem*{sdt}{Serre Duality Theorem}
\DeclareMathOperator{\dm}{Dom} \DeclareMathOperator{\im}{img}
\theoremstyle{definition}
\theoremstyle{remark}
\theoremstyle{remark}
\begin{document}
\title[$L^2$ Serre Duality]{$L^2$ Serre Duality on Domains in Complex Manifolds and Applications}
\author{Debraj Chakrabarti}
\address{Department of Mathematics\\ Indian Institute of Technology Bombay\\ Powai, Mumbai --400 076\\India}
\email[Debraj Chakrabarti]{dchakrab@iitb.ac.in}
\author{Mei-Chi Shaw}
\address{Department of Mathematics\\ University of Notre Dame\\ Notre Dame, IN 46556\\ USA}
\email[Mei-Chi Shaw]{mei-chi.shaw.1@nd.edu}
\thanks{The
second-named author is partially supported by NSF grants.}
\begin{abstract}
An $L^2$ version of the Serre duality on domains  in  complex
manifolds involving duality of Hilbert space realizations of
the $\overline{\partial}$-operator is established. This duality
is used to study the solution of the
$\overline{\partial}$-equation with prescribed support.
Applications are given to $\overline{\partial}$-closed
extension of forms, as well to Bochner-Hartogs type extension
of CR functions.
\end{abstract}
\keywords{Serre Duality, Cauchy-Riemann Equation}
\subjclass[2000]{32C37, 35N15, 32W05}
\maketitle
\section{Introduction}
A fundamental result in the theory of complex manifolds is Serre's duality theorem.
This  establishes a duality between the cohomology of a
complex manifold $\Omega$ and  the cohomology of $\Omega$ with compact supports,
provided the Cauchy-Riemann operator $\dbar$ has closed range in appropriate degrees.

More precisely, this can be stated as follows: let $E$ be a holomorphic vector bundle on $\Omega$, and let $H^{p,q}(\Omega,E)$  denote the $(p,q)$-th Dolbeault
cohomology group for $E$-valued forms on $\Omega$,  and let  $H^{p,q}_{{\rm{comp}}}(\Omega,E)$  denote the $(p,q)$-th Dolbeault cohomology group with
compact support. Let $E^*$ denote the holomorphic vector bundle on $\Omega$  dual to the bundle $E$, and let $n=\dim_\cx\Omega$.
Then (we assume that all manifolds in this paper are countable at infinity):
\begin{sdt}
Suppose that each of  the two operators
\begin{equation}\label{eq-condserre} \mathcal{C}^\infty_{p,q-1}(\Omega,E)\xrightarrow{\dbar_E} \mathcal{C}^\infty_{p,q}(\Omega,E)\xrightarrow{\dbar_E}\mathcal{C}^\infty_{p,q+1}(\Omega,E)\end{equation}
 has closed range with respect to the natural Fr\'{e}chet topology.
 Then the dual of the topological vector space  $H^{p,q}(\Omega,E)$ (with the quotient Fr\'{e}chet topology)
can be canonically identified with the space $H^{n-p,n-q}_{\rm comp}(\Omega,E^*)$ with the quotient topology,
where we endow  spaces of compactly supported forms with the natural  inductive limit topology.
\end{sdt}
In fact, condition that the two maps in \eqref{eq-condserre} have closed range is also
necessary for the duality theorem to hold (see \cite{laufer}; also see \cite{LL1,LL2,LL3} for further
results of this type.)

Serre's original proof \cite{serre} is based on sheaf theory and the theory of topological vector spaces.
A different approach to this result, in the case when $\Omega$ is a compact complex manifold,
was given by Kodaira using Hodge theory (see \cite{kodaira} or    \cite{De1}.) In this note we extend Kodaira's method to non-compact Hermitian manifolds
to obtain  an $L^2$ analog of  the Serre duality. Special cases of  Serre-duality using $L^2$ methods  have appeared before
in many contexts (see \cite{kr}, or \cite[Theorem~5.1.7]{fk} and \cite{Ho2,Ho3}, for example.)  The $L^2$-Serre duality
 between the maximal and minimal realizations of the $\overline{\partial}$-operator is also used in the study of the $\dbar$-operator
 on compact complex spaces (see e.g.  \cite[Proposition~1.3]{PS}) and more general duality results (of the type discussed in $\S$\ref{sec-dualrealizations} below) are used as well in these investigations (see \cite[Chapter 5]{rup}.)
 Our treatment aims to streamline
and systematize these results, with emphasis on non-compact manifolds, and point out its close relation with the choice of $L^2$-realizations
of the Cauchy-Riemann operator $\dbar$, or alternatively, choice of boundary conditions for the $L^2$-realizations
of the formal  complex Laplacian $\dbar_E\vartheta_E+\vartheta_E\dbar_E$.

The $L^2$-duality can be interpreted in many ways. At one level, it is a duality between the standard $\Box$-Laplacian
with $\dbar$-Neumann boundary conditions, and the $\Box_c$-Laplacian with dual ( ``$\dbar$-Dirichlet")
boundary conditions. Using another approach, results regarding solution of the $\dbar$-equation in $L^2$ can be
converted to statements regarding the solution of the $\dbar_c$ equation. This leads to a solution of the $\dbar$-Cauchy
problem, i.e., solution of the $\dbar$-equation with prescribed support. At the heart of the matter
lies the existence of a duality between Hilbert space realizations of the $\dbar$-operator. This is explained
in  $\S$\ref{sec-dualrealizations}. However, for clarity of exposition, we concentrate on the classical duality between
the well-known maximal and minimal realizations of $\dbar$ in the rest of the paper.

As an application of the duality principle, we consider the problem of $\dbar$-closed extension of forms.
It is well-known that solving the $\dbar$-equation  with  a weight can be interpreted as solving $\dbar$ with bundle-valued forms (see  \cite{De2}.)  The weight function $\phi$ corresponds to the   metric for the trivial line bundle with  a metric under which the length of the vector $1$ at the point $z$ is  $e^{-\phi(z)}$.  It was used by H\"ormander  to study  the weighted  $\dbar$-Neumann operator by using weight functions  which are strictly plurisubharmonic in a neighborhood of a pseudoconvex domain. When the boundary is smooth, one  can also use the smooth weight functions to study the boundary regularity for pseudoconvex domains (see \cite{Ko2}) or pseudoconcave domains (see \cite{Sh1, Sh2}) in a Stein manifold.   In this paper we will  use the Serre duality to    study the $\dbar$ problems   with singular weight functions.
The use of singular weight functions allow us to obtain  the existence and regularity problem on  pseudoconcave domains with Lipschitz boundary  in Stein manifolds. The use of singular weights has the advantage that it  only requires  the boundary to be   Lipschitz.    Even when the boundary is smooth, the use of singular weight functions gives the regularity results much more directly (cf. the proof in \cite{Sh2} or  \cite[Chapter~9]{cs}).   This method is also useful when the manifold is not Stein,  as in the case of complex projective space $\cx\mathbb{P}^n$.  In this case, any pseudoconconvex domain  in $\cx\mathbb{P}^n$  is Stein, but $\cx\mathbb{P}^n$ is not Stein.  In recent years these problems have been studied by many people (see \cite{HI,CSW,CS1})  which are all variants of  the Serre duality results.

The plan of this paper is as follows. In $\S$\ref{sec-notation}, we recall basic definitions from complex
differential geometry and functional analysis. This material can be found in standard texts, e.g. \cite{gh,wells,grubb}.
Next, in $\S$\ref{sec-duality} we discuss  several avatars of the  $L^2$-duality theorem: at the level of Laplacians,
at the level of cohomology and for the $\dbar$ and $\dbar_c$ problems. We discuss a general form of the duality theorem using the notion of dual realizations of the $\dbar$ operator on vector bundles.
In $\S$\ref{sec-extension}, we apply the results of $\S$\ref{sec-duality} to
trivial line bundles with singular metrics on pseudoconvex domains. This leads to results on the $\dbar$-closed extension of
forms from pseudoconcave domains.  In the last section, we use the $L^2$ duality results to discuss the holomorphic extension of CR forms from the
boundary of a Lipschitz domain in a complex manifold. We obtain a proof of the Bochner-Hartogs extension theorem using duality.

\section{Notation and preliminaries}\label{sec-notation}Throughout this article,
$\Omega$ will denote a Hermitian manifold, and $E$ a
holomorphic vector bundle on $\Omega$.
\subsection{Differential operators on Hilbert spaces}\label{sec-hilbertoperators}

The metrics on $\Omega$ and $E$ induce an  inner product $(,)$ on the space
$\mathcal{D}(\Omega, E)$ of smooth compactly supported sections of $E$ over $\Omega$. The inner product is given by
\begin{equation}\label{eq-globalinnerproduct} ( f, g) = \int_\Omega \langle f,g\rangle dV,\end{equation}
where $\langle,\rangle$ is the inner product in the metric of the bundle $E$, and
$dV$ denotes the volume form induced by the metric of $\Omega$. This allows us
to define the Hilbert space $L^2(\Omega,E)$ of square integrable sections of $E$ over
$\Omega$ in the usual way as the completion of the space of smooth compactly supported sections
of $E$ over $\Omega$ under the inner product \eqref{eq-globalinnerproduct}.

Let $A$ be a differential operator acting on sections of $E$, i.e. $A:\mathcal{C}^\infty(\Omega,E)\rightarrow\mathcal{C}^\infty(\Omega,E)$,  and let $A'$ be the
formal adjoint of $A$ with respect to the inner product \eqref{eq-globalinnerproduct}. Recall that
this means that for smooth sections $f,g$ of $E$ over $\Omega$, at least one of which is
compactly supported, we have
\begin{equation} \label{eq-formaladjointdef} (Af,g)= (f,A'g).
\end{equation}
The well-known facts that $A'$ exits, that it is also a differential operator acting on sections of $E$,
and that $A'$ has the same order as $A$ follow
from a direct computation in local coordinates using integration by parts.
It is clear that $(A')'= A$, i.e. the formal adjoint of $A'$ is $A$.

By an {\em operator} $T$ from a Hilbert space $\mathsf{H_1}$ to another Hilbert space
$\mathsf{H_2}$ we mean a $\cx$-linear map from a linear subspace $\dm(T)$ of $\mathsf{H_1}$
into $\mathsf{H_2}$. We use the notation $T:\mathsf{H_1}\dashrightarrow \mathsf{H_2}$,
to denote the fact that $T$ is defined on a subspace of $\mathsf{H_1}$ (rather than on all of $\mathsf{H_1}$,
when we write $T:\mathsf{H_1}\rightarrow \mathsf{H_2}$.) Recall that such an
operator is said to be {\em closed} if its graph is closed as a subspace of the product
Hilbert space $\mathsf{H_1}\times \mathsf{H_2}$.

The differential operator $A$  gives rise to several closed operators on
the Hilbert space $L^2(\Omega, E)$.

1. The {\em weak maximal realization} $A_{\max}$:  we say for $f,g\in L^2(\Omega, E)$ that
$Af=g$ in the {\em weak sense} if for all test sections $\phi \in \mathcal{D}(\Omega, E)$ we have
that
\begin{equation}\label{eq-weak}
 (f,A'\phi)= (g,\phi).\end{equation}
(This can be  rephrased in terms of the action of $A$ on distributional sections of $E$, but we will not
need this.)
The weak maximal realization $A_{\max}$ is the densely-defined closed (cf. Lemma~\ref{lem-maxminadjoint})
linear operator on  $L^2(\Omega, E)$ with domain $\dm(A_{\max})$ consisting of
all $f\in L^2(\Omega, E)$ such that $Af\in L^2(\Omega, E)$, where $Af$ is taken in the weak sense.
On $\dm(A_{\max})$, we define $A_{\max}f=Af$ in the weak sense.

2. The {\em strong minimal realization} $A_{\min}$  is the closure of the
densely defined operator $A_{\mathcal{D}}$ on $L^2(\Omega,E)$, where $A_{\mathcal{D}}$
denotes the
restriction of $A$ to the space of compactly supported sections
$\mathcal{D}(\Omega, E)$.
More precisely, $\dm(A_{\min})$ consists of those $f\in
L^2(\Omega, E)$, for which there is a $g\in L^2(\Omega, E)$ and a
sequence $\{f_\nu\}\subset \mathcal{D}(\Omega, E)$ such that
$f_\nu\rightarrow f$ and $Af_\nu\rightarrow g$ in $L^2(\Omega, E)$.
We set $A_{\min}f=g$. The fact that $A_{\mathcal{D}}$ is closeable
is a standard result in functional analysis (see \cite{grubb}.)

More generally, a {\em closed realization} of the differential operator $A$ is a closed operator
$\tilde{A}:L^2(\Omega,E)\dashrightarrow L^2(\Omega,E)$ which extends the operator $A_{\min}$. Such an operator
satisfies
\[ A_{\min}\subseteq \tilde{A} \subseteq A_{\max}.\]
Note that if $\Omega$ is complete in its Hermitian metric (in particular if $\Omega$ is compact), then
the space $\mathcal{D}(\Omega,E)$ of compactly supported smooth sections of $E$ is dense in $\dm(A_{\max})$ in
the graph norm, and  it follows that $A_{\max}=A_{\min}$, and there is a unique closed realization of $A$
as a Hilbert-space  operator. We are more interested in the case when $\Omega$ is not complete,
e.g., when $\Omega$ is a relatively  compact domain in a larger Hermitian manifold.

We now recall the following well-known fact, which follows immediately from
\eqref{eq-weak}  (see \cite[Lemma~4.3]{grubb}):
\begin{lemma} \label{lem-maxminadjoint}
As operators on $L^2(\Omega, E)$, the weak maximal realization $A_{\max}$  of
the differential operator $A$ and the strong minimal realization $A'_{\min}$ of
its formal adjoint $A'$ are Hilbert space adjoints, i.e. we have $A_{\max} =\left(A'_{\min}\right)^*$
(note that this implies that $A_{\max}$ is closed) and also   $A'_{\min}=\left(A_{\max}\right)^*$.
\end{lemma}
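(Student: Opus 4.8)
The plan is to show that the graphs of $A_{\max}$ and $(A'_{\min})^*$ coincide by directly unwinding the definition of the Hilbert-space adjoint, and then to deduce the second identity formally from the first. The first ingredient I would use is the standard fact that the Hilbert-space adjoint of a densely defined operator depends only on the closure of its graph (the graph of the adjoint is, up to the canonical flip, the orthogonal complement of the graph of the operator, and orthogonal complements of a set and of its closure agree). Since $A'_{\min}$ is by definition the closure of $A'_{\mathcal{D}}$, this gives $(A'_{\min})^* = (A'_{\mathcal{D}})^*$, so it suffices to identify $(A'_{\mathcal{D}})^*$. Note that $A'_{\mathcal{D}}$ is densely defined because $\mathcal{D}(\Omega,E)$ is dense in $L^2(\Omega,E)$, so this adjoint exists.

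Next I would carry out the identification. Recall that $u \in \dm\bigl((A'_{\mathcal{D}})^*\bigr)$ with $(A'_{\mathcal{D}})^* u = v$ precisely when $(A'\phi, u) = (\phi, v)$ holds for every $\phi \in \dm(A'_{\mathcal{D}}) = \mathcal{D}(\Omega,E)$. Using the conjugate symmetry of the inner product, this is equivalent to $(u, A'\phi) = (v, \phi)$ for all $\phi \in \mathcal{D}(\Omega,E)$, which is exactly the statement that $A u = v$ in the weak sense of \eqref{eq-weak} (here one uses $(A')' = A$). Comparing with the definition of the weak maximal realization, this says precisely that $u \in \dm(A_{\max})$ and $A_{\max} u = v$. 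Hence $\dm\bigl((A'_{\mathcal{D}})^*\bigr) = \dm(A_{\max})$ and the two operators agree on this common domain, giving $A_{\max} = (A'_{\min})^*$. In particular $A_{\max}$, being an adjoint, is automatically closed.

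For the reverse identity I would argue formally: taking adjoints of $A_{\max} = (A'_{\min})^*$ gives $(A_{\max})^* = (A'_{\min})^{**}$. Since $A'_{\min}$ is densely defined and closed, von Neumann's theorem on the double adjoint yields $(A'_{\min})^{**} = A'_{\min}$, and therefore $(A_{\max})^* = A'_{\min}$. The argument is essentially a definition-chase, so I do not expect a serious obstacle; the only points requiring care are the bookkeeping of complex conjugates in the inner product, the verification that $A'_{\mathcal{D}}$ is densely defined (so that its adjoint and double adjoint make sense), and the invocation of the two standard functional-analytic facts above—namely that the adjoint sees only the graph closure, and that the double adjoint of a densely defined closable operator is its closure. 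Conceptually, the lemma merely records that the weak maximal realization was defined precisely so as to be the Hilbert-space adjoint of the minimal realization of the formal adjoint.
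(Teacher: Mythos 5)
Your proposal is correct and follows essentially the same route as the paper: both identify $(A'_{\mathcal{D}})^*$ with $A_{\max}$ by unwinding the weak definition \eqref{eq-weak} (the paper phrases this via boundedness of $\phi\mapsto(f,A'\phi)$ plus Riesz representation, you via the adjoint's defining identity, which is the same computation), then pass to the closure $A'_{\min}$ and invoke $T^{**}=\overline{T}$ for the second identity. The bookkeeping points you flag (density of $\mathcal{D}(\Omega,E)$, the adjoint depending only on the graph closure, $(A')'=A$) are exactly the ingredients the paper uses implicitly, so there is no gap.
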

\begin{proof} Let $A'_\mathcal{D}$ denote the restriction of $A'$ to the compactly supported smooth
sections $\mathcal{D}(\Omega, E)$. Then $A'_\mathcal{D}$ is a densely defined
linear operator on $L^2(\Omega, E)$ and its closure is $\overline{A'_\mathcal{D}}= A'_{\min}$.
For a fixed $f\in L^2(\Omega, E)$, consider the linear map on $\dm(A_\mathcal{D})=\mathcal{D}(\Omega, E)$
given by $\phi\mapsto (f,A'\phi)$.
The definition of $\dm(A_{\max})$ shows that this map is bounded on $\dm(A'_\mathcal{D})$ if and only if
$f\in \dm(A_{\max})$. It now follows that $(A'_\mathcal{D})^*=A_{\max}$. By taking the closure, we conclude
that $(A'_{\min})^*=A_{\max}$. Since $T^{**}=\overline{T}$ it follows that $A'_{\min}=\left(A_{\max}\right)^*$.
\end{proof}

We note parenthetically that all the definitions  and
results of this section also hold in the simpler situation when $\Omega$ is a
Riemannian manifold, and $E$ is a complex vector bundle, and are independent
of the holomorphic structure of $\Omega$ and $E$.

\subsection{Bundle-valued forms}\label{sec-bundlevalued}
We recall the  standard construction of forms
on $\Omega$ with values in $E$ . Recall that an $E$-valued $(p,q)$-form on $\Omega$ is a section of the bundle
 $\Lambda^{p,q}T^*(\Omega)\tensor E,$  where $\Lambda^{p,q}T^*(\Omega)$ is the bundle
of $\cx$-valued forms of bidegree $(p,q)$ (see \cite{wells} for details.)
We denote by $\mathcal{C}^\infty_{p,q}(\Omega,E)$ the space of $E$-valued $(p,q)$-forms of class
$\mathcal{C}^\infty$, so that if $\{e_\alpha\}_{\alpha=1}^k$ is a
local frame of $E$, then locally any element $\phi$ of
$\mathcal{C}^\infty_{p,q}(\Omega)$ has a representation
\begin{equation}\label{eq-phi}
\phi = \sum_\alpha \phi^\alpha\tensor e_\alpha,\end{equation} where
the $\phi^\alpha$ are ($\cx$-valued)  $(p,q)$-forms with smooth
coefficients.

It is well-known that the operator $\dbar$ gives rise
to an operator $\dbar\tensor \id_E=\dbar_E:
\mathcal{C}^\infty_{p,q}(\Omega,E)\rightarrow
\mathcal{C}^\infty_{p,q+1}(\Omega,E)$, via the prescription
\begin{equation}\label{eq-dbare} \dbar_E \phi = \sum_\alpha (\dbar
\phi^\alpha)\tensor e_\alpha.\end{equation} See \cite{gh} for
details of this construction. For each $p$ with $0\leq p\leq n$,
this gives rise to a complex $(\mathcal{C}^\infty_{p,*}(\Omega,E),\dbar_E)$
of $E$-valued forms on $\Omega$.

With the holomorphic vector bundle
$E\rightarrow \Omega$  we can
associate the {\em dual bundle} $E^*\rightarrow\Omega$, which is a
holomorphic vector bundle over $\Omega$, such that over a point
$x\in\Omega$, the fiber $(E^*)_x$ of $E^*$ coincides with the dual
vector space $(E_x)^*$ of  the fiber $E_x$ of $E$. One then has a
natural isomorphism of bundles $E\cong(E^*)^*$, and we will always
make this identification. If $E$ is endowed with a Hermitian bundle metric,
this induces a Hermitian bundle metric on $E^*$ in a natural way, via
the identification of $E$ and $E^*$ given by the Hermitian product on
each fiber.

We can also define a wedge product
\[ \wedge: \mathcal{C}^\infty_{p,q}(\Omega, E)\tensor
\mathcal{C}^\infty_{p',q'}(\Omega,E^*)\rightarrow
\mathcal{C}^\infty_{p+p',q+q'}(\Omega)\] of  an $E$-valued $(p,q)$-form and
an $E^*$-valued $(p',q')$-form with value an ordinary (i.e.
$\cx$-valued) $(p+p',q+q')$-form in the following way. Suppose that
$\{e_\alpha\}_{\alpha=1}^k$ is a local frame for the bundle $E$ over
some open set in $\Omega$, and let $\{f_\alpha\}_{\alpha=1}^k$ be a
frame of $E^*$. Given $\phi\in \mathcal{C}^\infty_{p,q}(\Omega,E)$ and an
$\psi\in \mathcal{C}^\infty_{p',q'}(\Omega,E^*)$, we locally write $\phi=
\sum_\alpha \phi^\alpha\tensor e_\alpha$ and $\psi=\sum_\beta
\psi^\beta\tensor f_\beta$, and define pointwise \begin{equation}
\label{eq-wedge} \phi\wedge\psi
=\sum_{\alpha,\beta}f_\beta(e_\alpha)\,\phi^\alpha\wedge\psi^\beta.\end{equation}
This extends by bilinearity to a wedge product on
$\mathcal{C}^\infty_{*,*}(\Omega,E)\tensor \mathcal{C}^\infty_{*,*}(\Omega,E^*)$.

If  $E$ is a holomorphic vector bundle on $\Omega$ define a linear operator
$\sigma_E$ on $\mathcal{C}^\infty_{*,*}(\Omega,E)$ as follows: let
$\phi$ be a form of bidegree $(p,q)$. Then we set
\begin{equation}\label{eq-sigmadef} \sigma_E\phi = (-1)^{p+q}\phi,\end{equation}
and extend linearly to $\mathcal{C}^\infty_{*,*}(\Omega,E)$.
Clearly $(\sigma_E)^2$ is the identity map on $\mathcal{C}^\infty_{*,*}(\Omega,E)$.
Further, if $T$ is any  $\rl$-linear operator from $\mathcal{C}^\infty_{*,*}(\Omega,E)$ to
$\mathcal{C}^\infty_{*,*}(\Omega,F)$ (where $F$ is another holomorphic vector bundle on $\Omega$)
of degree $d$, i.e., if for a homogeneous form $f$ we have $\deg(Tf)-\deg(f)=d$, then
we have the relation
\[ \sigma_F\,T = (-1)^d\, T\,\sigma_E.\]

 It is easy to see that the wedge product defined in
\eqref{eq-wedge} satisfies the Leibniz formula
\begin{equation}\label{eq-leibniz} \dbar(\phi\wedge\psi) = \dbar_E\phi\wedge\psi
+ \sigma_E \phi\wedge \dbar_{E^*}\psi\end{equation}

We note here that the Hermitian metric on $\Omega$ and the bundle metric on $E$ have not
played any role in this section.

\subsection{The space $L^2_*(\Omega,E)$}
We  now use the facts that  the manifold $\Omega$ has been endowed with a Hermitian metric
which we denote by $g$,
i.e., each tangent space $T_x\Omega$ has been endowed a Hermitian
inner product $g_x(\cdot,\cdot)$, which depends smoothly on the base
point $x$ and also the fact the holomorphic vector bundle $E$ has  been
endowed with a Hermitian metric $h$, i.e. for each $x\in \Omega$,
$h_x$ is a Hermitian product on the fiber $E_x$ of $E$ over $x$.
The dual bundle $E^*$ can be endowed with a Hermitian metric in the
natural way.

The bundle $\Lambda^{p,q}T^*\Omega\tensor E$ has a natural Hermitian
inner product (cf. \eqref{eq-pointwiseinnerproduct} below), so we can construct the space $L^2_{p,q}(\Omega,E)=L^2(\Omega,\Lambda^{p,q}T^*\Omega\tensor E)$
of  square integrable $E$-valued forms using the method of $\S$\ref{sec-hilbertoperators}.
We let $L^2_*(\Omega,E)$ be the orthogonal direct sum of the Hilbert spaces
$L^2_{p,q}(\Omega,E)$ for $0\leq p,q\leq n$.

We write down the pointwise inner product on the space of $E$-valued forms.
Let $\phi$ be as in \eqref{eq-phi}, and let $\psi$ be another
$(p,q)$-form with local representation
\[ \psi =\sum_\beta\psi^\beta\tensor e_\beta,\]
with respect to the same local  frame. The pointwise
inner product of the  $E$-valued $(p,q)$ forms $\phi$ and $\psi$ is given by
\begin{equation}\label{eq-pointwiseinnerproduct}
\langle\phi,\psi\rangle_x=\sum_{\alpha,\beta}\langle\phi^\alpha,
\psi^\beta\rangle_x \,h_x(e_\alpha,e_\beta)
\end{equation}
at each point $x$ in the open set where the frame
$\{e_\alpha\}$ is defined, where by
$\langle,\rangle$ on right-hand side the standard pointwise
inner-product on $\cx$-valued $(p,q)$-forms is meant (see \cite{cs}.) It is not
difficult to see that this definition is independent of the choice
of the local frame. We extend \eqref{eq-pointwiseinnerproduct} to a
pointwise inner product on $\mathcal{C}^\infty_{*,*}(\Omega,E)$ by
declaring that forms of different bidegree are pointwise orthogonal.

 \subsection{The Hodge Star}The pointwise inner product
\eqref{eq-pointwiseinnerproduct} and the wedge product
\eqref{eq-wedge} can be related by the {\em Hodge-star operator},
the map $\star_E:\mathcal{C}^\infty_{p,q}(\Omega,E)\rightarrow
\mathcal{C}^\infty_{n-p,n-q}(\Omega,E^*)$, defined by
\begin{equation}\label{eq-star}
\langle\phi,\psi\rangle dV = \phi\wedge \star_E \psi,
\end{equation}
where $dV$ is the volume form on $\Omega$ induced by the Hermitian
metric $g$. It is easy to check that \eqref{eq-star} defines
$\star_E$ as an $\rl$-linear and $\cx$-{\em antilinear map} i.e.,
for a $\cx$-valued function $f$ and a $E$-valued form $\phi$,  we
have $\star_E(f\phi)=\overline{f}\star_E\phi$.  We note that
\begin{equation}\label{eq-starstar}
\star_{E^*}\,\star_E=\sigma_E,\end{equation}
and that
\begin{equation}\label{eq-sigmastar}
\sigma_{E^*}\,\star_E = \star_E\,\sigma_E,
\end{equation}
where $\sigma_{E},\sigma_{E^*}$ are as in \eqref{eq-sigmadef}.

Let $\vartheta_E$ $:\mathcal{C}^\infty_{*,*}(\Omega,E)\rightarrow
\mathcal{C}^\infty_{*,*}(\Omega,E)$ denote the formal adjoint of $\dbar_E$,
 We recall the well-known formula
for $\vartheta_E$, and take this opportunity to point out that the
formula for $\vartheta_E$ given in the popular reference \cite[p.
152]{gh} has a typographical error.
\begin{lemma}\label{lem-vartheta}
The following formula holds:
\begin{equation}\label{eq-formaladj}
\vartheta_E=
-\star_{E^*}\,\dbar_{E^*}\,\star_E.\end{equation}\end{lemma}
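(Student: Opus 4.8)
The plan is to verify directly that the operator on the right-hand side of \eqref{eq-formaladj} satisfies the defining property \eqref{eq-formaladjointdef} of the formal adjoint of $\dbar_E$, and then to invoke the uniqueness of the formal adjoint. Fix $\phi\in\mathcal{C}^\infty_{p,q}(\Omega,E)$ and $\psi\in\mathcal{C}^\infty_{p,q+1}(\Omega,E)$, at least one of which is compactly supported, and compute $(\dbar_E\phi,\psi)$. Using the defining relation \eqref{eq-star} of the Hodge star, and noting that $\dbar_E\phi$ has bidegree $(p,q+1)$ with values in $E$ while $\star_E\psi$ has bidegree $(n-p,n-q-1)$ with values in $E^*$, I would write
\[ (\dbar_E\phi,\psi)=\int_\Omega\langle\dbar_E\phi,\psi\rangle\,dV=\int_\Omega\dbar_E\phi\wedge\star_E\psi, \]
which is the integral over $\Omega$ of an $(n,n)$-form.

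Next I would apply the Leibniz formula \eqref{eq-leibniz} to the $(n,n-1)$-form $\phi\wedge\star_E\psi$, where $\phi$ is $E$-valued and $\star_E\psi$ is $E^*$-valued as required, to obtain
\[ \dbar(\phi\wedge\star_E\psi)=\dbar_E\phi\wedge\star_E\psi+\sigma_E\phi\wedge\dbar_{E^*}(\star_E\psi). \]
Integrating over $\Omega$, the left-hand term drops out: the form $\phi\wedge\star_E\psi$ is compactly supported since one of $\phi,\psi$ is, and because it has bidegree $(n,n-1)$ one has $\dbar(\phi\wedge\star_E\psi)=d(\phi\wedge\star_E\psi)$, whose integral vanishes by Stokes' theorem. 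Using $\sigma_E\phi=(-1)^{p+q}\phi$ from \eqref{eq-sigmadef}, this yields
\[ (\dbar_E\phi,\psi)=-(-1)^{p+q}\int_\Omega\phi\wedge\dbar_{E^*}(\star_E\psi). \]

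Finally I would identify the right-hand side with a pairing of the form $(\phi,\cdot)$. Set $\eta:=-\star_{E^*}\dbar_{E^*}\star_E\psi$, a form of bidegree $(p,q)$ with values in $E$. Applying $\star_E$ and using the instance of \eqref{eq-starstar} with $E^*$ in place of $E$, namely $\star_E\star_{E^*}=\sigma_{E^*}$, together with the observation that $\dbar_{E^*}(\star_E\psi)$ has total degree $2n-p-q$, so that $\sigma_{E^*}$ multiplies it by $(-1)^{p+q}$, one finds $\star_E\eta=-(-1)^{p+q}\dbar_{E^*}(\star_E\psi)$. Hence, by \eqref{eq-star},
\[ (\phi,\eta)=\int_\Omega\phi\wedge\star_E\eta=-(-1)^{p+q}\int_\Omega\phi\wedge\dbar_{E^*}(\star_E\psi)=(\dbar_E\phi,\psi). \]
Since this holds for all admissible $\phi$, the uniqueness of the formal adjoint characterized by \eqref{eq-formaladjointdef} forces $\vartheta_E\psi=\eta$, which is precisely \eqref{eq-formaladj}.

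The computation is structurally short, so the real work is bookkeeping: the main obstacle will be tracking with care all the signs, those produced by $\sigma_E$ and $\sigma_{E^*}$ via the degree rule \eqref{eq-sigmadef} and those produced by the $\cx$-antilinearity of the Hodge star, and confirming the bidegrees at each step so that the term killed by Stokes' theorem is genuinely $d$ of an $(n,n-1)$-form and so that the sign emerging in the last display is exactly $-(-1)^{p+q}$ rather than some other power of $-1$.
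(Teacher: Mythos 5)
Your proof is correct and is essentially the paper's own argument: both compute $(\dbar_E\phi,\psi)=\int_\Omega\dbar_E\phi\wedge\star_E\psi$ via \eqref{eq-star}, apply the Leibniz formula \eqref{eq-leibniz} together with Stokes' theorem to discard the exact $(n,n-1)$-term, and then use $\star_E\star_{E^*}=\sigma_{E^*}$ (the instance of \eqref{eq-starstar} for $E^*$) to recognize the remaining integral as $(\phi,-\star_{E^*}\dbar_{E^*}\star_E\psi)$. The differences are purely cosmetic --- you take $\phi,\psi$ of bidegrees $(p,q)$ and $(p,q+1)$ rather than $(p,q-1)$ and $(p,q)$, and you spell out the appeal to uniqueness of the formal adjoint --- and your sign bookkeeping, $-(-1)^{p+q}$ from Leibniz--Stokes matching $\sigma_{E^*}=(-1)^{2n-p-q}$ on $\dbar_{E^*}(\star_E\psi)$, checks out.
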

\begin{proof} It is sufficient to consider the case when the smooth forms
$\phi$  and $\psi$ are  of bidegree $(p,q-1)$ and $(p,q)$
respectively and at least one of them has compact support and
compute
\begin{align*}(\dbar_E\phi,\psi)_\Omega
&=\int_\Omega\dbar_E\phi\wedge \star_E\psi\\
&=\int_\Omega\left(\dbar(\phi\wedge\star_E\psi)-\sigma_E\phi\wedge
\dbar_{E^*}\star_E\psi\right)
&\text{(using \eqref{eq-leibniz})}\\
&=- (-1)^{p+q-1}\int_\Omega \phi\wedge\dbar_{E^*}\star_E\psi
&\text{(using Stokes' formula)} \\
&= -\int_\Omega\phi\wedge
(-1)^{(n-p)+(n-q+1)}\dbar_{E^*}\star_E\psi\\
&= -\int_\Omega\phi\wedge \sigma_{E^*}\dbar_{E^*}\star_E\psi\\
&=-\int_\Omega\phi\wedge \star_E\star_{E^*}\dbar_{E^*}\star_E\psi
&\text{(using \eqref{eq-starstar})} \\
&=(\phi,-\star_{E^*}\dbar_{E^*}\star_E\psi)_\Omega.\\
\end{align*}
\end{proof}
\begin{cor}
We also have the formula
\begin{equation}\label{eq-formaladj2}
\dbar_{E}=\star_{E^*}\,\vartheta_{E^*}\,\star_E
\end{equation}
\end{cor}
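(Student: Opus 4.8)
The plan is to deduce \eqref{eq-formaladj2} directly from Lemma~\ref{lem-vartheta} by applying that lemma to the dual bundle $E^*$ in place of $E$ and then conjugating by the Hodge star. The essential point is that Lemma~\ref{lem-vartheta} is valid for \emph{every} holomorphic vector bundle. Invoking it for $E^*$ and using the canonical identification $(E^*)^*\cong E$, under which $\star_{(E^*)^*}=\star_E$ and $\dbar_{(E^*)^*}=\dbar_E$, we obtain
\[
\vartheta_{E^*}=-\star_{E}\,\dbar_{E}\,\star_{E^*}.
\]

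First I would substitute this expression into the right-hand side of \eqref{eq-formaladj2}, giving $\star_{E^*}\,\vartheta_{E^*}\,\star_E=-\star_{E^*}\star_E\,\dbar_E\,\star_{E^*}\star_E$. Reading this composition from the right, each adjacent pair $\star_{E^*}\star_E$ acts on $E$-valued forms, so by \eqref{eq-starstar} each such pair equals $\sigma_E$. This collapses the right-hand side to $-\sigma_E\,\dbar_E\,\sigma_E$.

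Next I would simplify $\sigma_E\,\dbar_E\,\sigma_E$ using the degree relation $\sigma_F\,T=(-1)^d\,T\,\sigma_E$ recorded immediately after \eqref{eq-sigmadef}. Since $\dbar_E$ raises the total degree by one, this gives $\sigma_E\,\dbar_E=-\dbar_E\,\sigma_E$, and then the involution $\sigma_E^2=\id$ yields $\sigma_E\,\dbar_E\,\sigma_E=-\dbar_E$. Hence $-\sigma_E\,\dbar_E\,\sigma_E=\dbar_E$, which is precisely \eqref{eq-formaladj2}.

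The argument is essentially pure sign-bookkeeping, so I do not anticipate a serious obstacle. The only steps demanding care are the correct application of Lemma~\ref{lem-vartheta} to $E^*$ through the identification $(E^*)^*\cong E$, and the consistent tracking of the bidegree-dependent signs carried by the two $\sigma_E$ factors; getting either of these wrong flips an overall sign and spoils the identity.
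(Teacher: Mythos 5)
Your proposal is correct and follows essentially the same route as the paper: conjugate the formula of Lemma~\ref{lem-vartheta} by Hodge stars, collapse the adjacent pairs of stars to $\sigma$ via \eqref{eq-starstar}, and use the anticommutation $\sigma\,\dbar=-\dbar\,\sigma$ together with $\sigma^2=\id$. The only (immaterial) difference is the order of relabeling: the paper simplifies $\star_E\,\vartheta_E\,\star_{E^*}$ to $\dbar_{E^*}$ and then replaces $E$ by $E^*$, whereas you invoke the lemma for $E^*$ at the outset under the identification $(E^*)^*\cong E$.
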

\begin{proof}
Using \eqref{eq-formaladj}, we  compute
\begin{align*}
\star_E\vartheta_E\star_{E^*}&=-\star_E\star_{E^*}\dbar_{E^*}\star_E\star_{E^*}\\
&=-\sigma_{E^*}\dbar_{E^*}\sigma_{E^*}\\
&=\dbar_{E^*}.
\end{align*}
The result follows on replacing $E$ by $E^*$.
\end{proof}
\section{Duality}\label{sec-duality}
\subsection{The basic observation}\label{sec-realizations}
According to  the conventions of
multidimensional complex analysis, we adopt the following
notation: we write
\begin{center}
\begin{tabular}{lll}
$\dbar_E$  &\quad for  $(\dbar_E)_{\max}$, & the weak maximal realization
of $\dbar_E$ on $L^2_*(\Omega,E)$\\
$\dbar_{c,E}$&\quad  for  $(\dbar_E)_{\min}$, & the strong minimal realization
of $\dbar_E$ on $L^2_*(\Omega,E)$\\
$\vartheta_E$ &\quad for  $(\vartheta_E)_{\max}$, & the weak maximal realization
of $\vartheta_E$ on $L^2_*(\Omega,E)$\\
$\dbar^*_{E}$ &\quad  for  $(\vartheta_E)_{\min}$, & the strong minimal realization
of $\vartheta_E$ on $L^2_*(\Omega,E)$.\\
\end{tabular}
\end{center}
By Lemma~\ref{lem-maxminadjoint}, the operators $\dbar_E$ and $\dbar_E^*$ are Hilbert space adjoints to
each other, as are the operators $ \dbar_{c,E}$ and $\vartheta_E$.

The operator $\sigma_E$  defined in  \eqref{eq-sigmadef}  extends from the space
$\mathcal{D}_*(\Omega,E)$ of compactly supported
forms to give rise to an unitary operator on $L^2_*(\Omega,E)$. Similarly the Hodge-Star
operator  $\star_E$  defined in \eqref{eq-star} extends from $\mathcal{D}_*(\Omega,E)$
to give rise to a conjugate-linear self-isometry of $L^2_*(\Omega,E)$.  We continue to denote
these Hilbert space realizations by  $\sigma_E$ and $\star_E$ respectively. We are now ready to prove the
main observation behind the use of the Hodge-$\star$ operator in $L^2$ theory:

\begin{prop}\label{prop-basic}
Let $\Omega$ be a Hermitian manifold, and $E$ a holomorphic
vector bundle on $\Omega$ equipped with a Hermitian metric.
Let $\dbar_E, \dbar^*_E, \vartheta_{E^*}, \dbar_{c,E^*}$ be the Hilbert space
realizations as defined above, and let $f\in L^2_*(\Omega,E)$:
\begin{enumerate}
\item  $f\in\dm(\dbar^*_E)$ if and only if $\star_Ef \in \dm(\dbar_{c,E^*})$. Also on
$\dm(\dbar^*_E)$ we have the relation
\begin{equation}\label{eq-duality}
\dbar^*_E= -\star_{E^*}\dbar_{c,E^*}\star_E.
\end{equation}
\item  $f\in \dm(\dbar_E)$ if and only if $\star_E f\in \dm(\vartheta_{E^*})$. On $\dm(\dbar_E)$
we have the relation
\begin{equation}\label{eq-duality2}
\dbar_E  =\star_{E^*}\,\vartheta_{E^*}\,\star_E
\end{equation}
\end{enumerate}
\end{prop}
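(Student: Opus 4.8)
The plan is to prove part (1) for the minimal (strong) realizations by a direct closure argument, and then to deduce part (2) for the maximal (weak) realizations by passing to Hilbert-space adjoints via Lemma~\ref{lem-maxminadjoint}. Throughout I would use three structural facts about the Hodge star: it is a bounded conjugate-linear isometry of $L^2_*(\Omega,E)$ onto $L^2_*(\Omega,E^*)$; it carries $\mathcal{D}_*(\Omega,E)$ bijectively onto $\mathcal{D}_*(\Omega,E^*)$ with inverse $\star_E^{-1}=\sigma_E\star_{E^*}$, a consequence of \eqref{eq-starstar}; and it intertwines the formal operators according to \eqref{eq-formaladj} and its corollary \eqref{eq-formaladj2}. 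The one genuine subtlety, which surfaces in part (2), is the interaction of the conjugate-linear $\star$ with the adjoint operation.

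For part (1), recall that $\dbar^*_E=(\vartheta_E)_{\min}$ and $\dbar_{c,E^*}=(\dbar_{E^*})_{\min}$ are by definition the closures of $\vartheta_E$ and $\dbar_{E^*}$ acting on compactly supported smooth forms. First I would take $f\in\dm(\dbar^*_E)$ together with $f_\nu\in\mathcal{D}_*(\Omega,E)$ satisfying $f_\nu\to f$ and $\vartheta_E f_\nu\to\dbar^*_E f$, and set $u_\nu=\star_E f_\nu\in\mathcal{D}_*(\Omega,E^*)$. Since $\star_E$ is a bounded isometry, $u_\nu\to\star_E f$. The formal identity \eqref{eq-formaladj} reads $\vartheta_E f_\nu=-\star_{E^*}\dbar_{E^*}u_\nu$ on these test forms, so that $\dbar_{E^*}u_\nu=-\star_{E^*}^{-1}\vartheta_E f_\nu$ converges in $L^2$; hence $\star_E f\in\dm(\dbar_{c,E^*})$. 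Passing to the limit and collapsing the resulting $\sigma$-factors through \eqref{eq-starstar}, \eqref{eq-sigmastar} and $\sigma_E^2=\id$ then yields exactly \eqref{eq-duality}. The reverse implication is the same computation run backwards, starting from an approximating sequence for $\star_E f$ and pulling it back by $\star_E^{-1}=\sigma_E\star_{E^*}$, which again lands in $\mathcal{D}_*(\Omega,E)$; this gives the claimed equivalence of domains together with the operator identity.

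For part (2) I would read the statement off part (1) by taking Hilbert-space adjoints. By Lemma~\ref{lem-maxminadjoint} one has $\dbar_E=(\dbar^*_E)^*$ and $\vartheta_{E^*}=(\dbar_{c,E^*})^*$, so applying $*$ to the operator identity \eqref{eq-duality} reduces the claim to evaluating $(\star_{E^*}\,\dbar_{c,E^*}\,\star_E)^*$. This is where the main obstacle lies: since $\star_E$ and $\star_{E^*}$ are conjugate-linear, I must work with the antiunitary adjoint, for which $\star_E^*=\star_E^{-1}=\sigma_E\star_{E^*}$ and $\star_{E^*}^*=\sigma_{E^*}\star_E$. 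The key point, which I would confirm by a short pairing computation, is that conjugation by an antiunitary commutes with $*$—the two antilinearities cancel, producing no spurious complex conjugation. Reversing the order of the three factors and using $(\dbar_{c,E^*})^*=\vartheta_{E^*}$ then gives $(\star_{E^*}\,\dbar_{c,E^*}\,\star_E)^*=\sigma_E\star_{E^*}\,\vartheta_{E^*}\,\sigma_{E^*}\star_E$.

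Finally I would clear the $\sigma$-factors. Commuting $\sigma_E\star_{E^*}=\star_{E^*}\sigma_{E^*}$ by \eqref{eq-sigmastar}, and invoking the grading relation following \eqref{eq-sigmadef}—which for the degree $-1$ operator $\vartheta_{E^*}$ yields $\sigma_{E^*}\,\vartheta_{E^*}\,\sigma_{E^*}=-\vartheta_{E^*}$—the minus sign carried over from \eqref{eq-duality} is cancelled by the minus sign from the degree shift, leaving precisely $\dbar_E=\star_{E^*}\,\vartheta_{E^*}\,\star_E$, which is \eqref{eq-duality2}. The equivalence of domains is then immediate, since conjugation by the bijection $\star_E$ carries $\dm(\vartheta_{E^*})$ onto $\dm(\dbar_E)$. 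I expect the sign and $\sigma$-bookkeeping in this adjoint passage, rather than any analytic difficulty, to be the part requiring the most care.
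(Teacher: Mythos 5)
Your proposal is correct. Part (1) is essentially identical to the paper's argument: approximate $f$ by test forms, push the sequence through $\star_E$, and use \eqref{eq-formaladj} to transfer convergence of $\vartheta_E f_\nu$ to convergence of $\dbar_{E^*}(\star_E f_\nu)$, with the converse run backwards through $\star_E^{-1}=\sigma_E\star_{E^*}$. Where you genuinely diverge is part (2). The paper proves \eqref{eq-duality2} directly, in parallel with part (1): since $\dbar_E$ and $\vartheta_{E^*}$ are \emph{weak} (maximal) realizations, membership in their domains is a purely distributional condition, and the formal identity \eqref{eq-formaladj2} transfers the weak equation under the isometry $\star_E$ in essentially one line --- $\dbar_E f=\star_{E^*}\vartheta_{E^*}(\star_E f)$ holds weakly, so $\vartheta_{E^*}(\star_E f)=(\star_{E^*})^{-1}\dbar_E f\in L^2$ --- with no adjoint calculus at all. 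You instead deduce (2) from (1) by taking Hilbert-space adjoints via Lemma~\ref{lem-maxminadjoint}, using $(\dbar^*_E)^*=\dbar_E$ and $(\dbar_{c,E^*})^*=\vartheta_{E^*}$. This is valid, and I checked your sign bookkeeping: the rule $(VTU)^*=U^{-1}T^*V^{-1}$ for antiunitaries $U,V$ (which you rightly flag as the one point needing a pairing computation, since $\star$ is conjugate-linear) gives $\dbar_E=-(\sigma_E\star_{E^*})\vartheta_{E^*}(\sigma_{E^*}\star_E)$, and then \eqref{eq-sigmastar} together with the grading relation $\sigma_{E^*}\vartheta_{E^*}\sigma_{E^*}=-\vartheta_{E^*}$ (degree $-1$) indeed cancels the minus sign inherited from \eqref{eq-duality}, yielding \eqref{eq-duality2} with the correct domain correspondence because $\sigma_{E^*}$ preserves $\dm(\vartheta_{E^*})$. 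What each approach buys: yours makes (2) a formal corollary of (1), exhibits the duality as literal adjointness conjugated by an antiunitary, and concentrates all analysis in a single closure argument, at the cost of importing the antiunitary-adjoint calculus and the $\sigma$-sign algebra; the paper's route is shorter and more elementary, treating the two statements symmetrically (minimal realizations via approximating sequences, maximal realizations via the weak equation) and never needing to reverse or adjoint a composition of unbounded and antilinear operators.
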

\begin{proof}
The results are obtained by taking the minimal and maximal realizations
of \eqref{eq-formaladj} and \eqref{eq-formaladj2} respectively.

To justify \eqref{eq-duality}, we note that if $f\in\dm(\dbar^*_E)$, there is a sequence
$\{f_\nu\}$ in $\mathcal{D}(\Omega,E)$ such that $f_\nu \rightarrow f$ in $L^2_*(\Omega,E)$ and
$\vartheta_E f_\nu\rightarrow \dbar^*_Ef$ also in $L^2_*(\Omega,E)$. Note that $\star_E f_\nu\in \mathcal{D}_*(\Omega,E^*)$, since $f_\nu$ is compactly supported. Further, since $\star_E$ extends
to an isometry of $L^2_*(\Omega, E)$ with $L^2_*(\Omega, E^*)$, it follows that
$\star_E f_\nu\rightarrow\star_E f$ in $L^2(\Omega, E^*)$. From \eqref{eq-formaladj} relating the
formal adjoints, it also follows that $\dbar_{E^*}  (\star_E f_\nu)= -(\star_{E^*})^{-1}\vartheta_E f_\nu\rightarrow
-(\star_{E^*})^{-1}\dbar^*_{E} f$. Consequently, $\star_Ef\in \dm(\dbar_{c,E^*})$, and
\eqref{eq-duality} holds. The converse assertion, that if $\star_Ef\in \dm(\dbar_{c,E^*})$ then  $f\in\dm(\dbar^*_E)$,
is proved similarly.

For \eqref{eq-duality2}, suppose that $f\in \dm(\dbar_E)$. This means that $f\in L^2_*(\Omega,E)$ and
$\dbar_E\in L^2_*(\Omega,E)$ (where $\dbar_E$ is taken in the weak sense.) Since $\star_E$ is an isometry
of the Hilbert space $L^2_*(\Omega, E)$ with the Hilbert space $L^2_*(\Omega,E^*)$, it follows that
$\star_E f\in L^2_*(\Omega,E^*)$. From \eqref{eq-formaladj2} we see that  in the weak sense, we have
$\dbar_E f= \star_{E^*} \vartheta_{E^*}(\star_E f)$. Consequently, $\vartheta_{E^*}(\star_E f)=(\star_{E^*})^{-1}\dbar_Ef \in L^2(\Omega,E^*)$. It follows that $\star_Ef\in \dm(\vartheta_{E^*})$ and \eqref{eq-duality2} holds.
The converse (if $\star_E f\in \dm(\vartheta_{E^*})$,  then $f\in \dm(\dbar_E)$ ) is proved the same way.
\end{proof}
\subsection{Duality of Laplacians}
Recall that the {\em $\dbar$-Laplacian} on $E$-valued
forms on  $\Omega$ is the  operator $\Box_E$ on $L^2_*(\Omega,E)$ defined by
\[ \Box_E=  \dbar_E\dbar_{E^*}+\dbar_{E^*}\dbar_E,\]
with domain
\[ \dm(\Box_E)=\left\{f\in L^2_*(\Omega,E)\mid f\in\dm(\dbar_E)\cap\dm(\dbar^*_E), \dbar_E f\in \dm(\dbar_E^*),\dbar_E^*f\in\dm(\dbar_E)\right\}.\]

The  {\em $\dbar_c$-Laplacian} on $E$-valued forms is the operator
\begin{align*}
 \Box_E^c &=  \dbar_{c,E}\dbar_{c,E}^*+\dbar_{c,E}^*\dbar_{c,E}\\
&=  \dbar_{c,E}\vartheta_E+\vartheta_E\dbar_{c,E}
\end{align*}
on $L^2_*(\Omega,E)$ with domain
\[ \dm(\Box_E)=\left\{f\in L^2_*(\Omega,E)\mid f\in\dm(\dbar_{c,E})\cap\dm(\vartheta_E), \dbar_{c,E} f\in \dm(\vartheta_E),\vartheta_Ef\in\dm(\dbar_{c,E})\right\}.\]
Each of $\Box$ and $\Box_E^c$ is a non-negative self-adjoint operator on $L^2_*(\Omega,E)$.
Note that on the subspace $\mathcal{D}_*(\Omega,E)$ of compactly supported $E$-valued forms
both $\Box_E$ and $\Box_E^c$ coincide with the ``formal Laplacian"
$\dbar_E\vartheta_E +\vartheta_E\dbar_E$. However, in general it is not true that
$\Box_E^c$ and $\Box_E$ are equal. By \cite[Lemma~3.1(2)]{bl}, we have $\Box_E=\Box_E^c$ if
and only if $\dbar_E=\dbar_{c,E}$. This happens if $\Omega$ is either compact or complete.

We define the spaces of $E$-valued {\em $\dbar$-Harmonic} and {\em $\dbar_c$-Harmonic} forms $\mathcal{H}_{p,q}(\Omega,E)$
and $\mathcal{H}_{p,q}^c(\Omega,E)$ by
\[ \mathcal{H}_{p,q}(\Omega,E)=\ker(\Box_E)\cap L^2_{p,q}(\Omega,E)\]
and
\[ \mathcal{H}_{p,q}^c(\Omega,E)=\ker(\Box^c_E)\cap L^2_{p,q}(\Omega,E).\]
It is easy to see that
\begin{align*}  \mathcal{H}_{p,q}(\Omega,E)&= \ker(\dbar_E)\cap\ker(\dbar_E^*)\cap L^2_{p,q}(\Omega,E)\\
&= \left\{ f\in \dm(\dbar_E)\cap \dm(\dbar^*_E)\cap L^2_{p,q}(\Omega,E)\mid \dbar_Ef=\dbar_E^*f=0\right\}.
\end{align*}
and similarly
\begin{align*} \mathcal{H}_{p,q}^c(\Omega,E)&=\ker(\dbar_{c,E})\cap\ker(\vartheta_E)\cap L^2_{p,q}(\Omega,E)\\
&= \left\{ f\in \dm(\dbar_{c,E})\cap \dm(\vartheta_E)\cap L^2_{p,q}(\Omega,E)\mid \dbar_{c,E}f=\vartheta_Ef=0\right\}.
\end{align*}
The following is now easy to prove
\begin{theorem}\label{thm-babyduality}
Let $f\in L^2_*(\Omega,E)$. Then, $f\in \dm(\Box_E)$ if and only if $\star_E f\in \dm(\Box_{E^*}^c)$.
Further, we have the relation
\begin{equation}\label{eq-laplacians}
\star_E\,\Box_E = \Box_{E^*}^c\,\star_E.
\end{equation}

Also, the restriction of the map $\star_E$ to $\mathcal{H}_{p,q}(\Omega,E)$ gives rise to an isomorphism
\begin{equation}\label{eq-babyserre}
\mathcal{H}_{p,q}(\Omega,E)\cong \mathcal{H}_{n-p,n-q}^c(\Omega,E^*)
\end{equation}
\end{theorem}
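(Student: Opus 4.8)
The plan is to deduce Theorem~\ref{thm-babyduality} directly from Proposition~\ref{prop-basic}, by transferring each of the two summands making up $\Box_E$ to the corresponding summand of $\Box_{E^*}^c$ under the isometry $\star_E$. Since $\star_E$ is a bijective conjugate-linear self-isometry of $L^2_*(\Omega,E)$ onto $L^2_*(\Omega,E^*)$ carrying $L^2_{p,q}(\Omega,E)$ onto $L^2_{n-p,n-q}(\Omega,E^*)$, with inverse $\sigma_E\star_{E^*}$ (using \eqref{eq-sigmastar} and \eqref{eq-starstar}), the whole argument reduces to bookkeeping of the two intertwining relations \eqref{eq-duality} and \eqref{eq-duality2}, together with the degree rule $\sigma_F T=(-1)^d T\sigma_E$.

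First I would record the two consequences of Proposition~\ref{prop-basic} in the form actually used: for $f$ in the appropriate domain,
\[ \star_E\dbar_E f=\sigma_{E^*}\vartheta_{E^*}\star_E f, \qquad \star_E\dbar_E^* f=-\sigma_{E^*}\dbar_{c,E^*}\star_E f, \]
obtained by applying $\star_E$ to \eqref{eq-duality2} and \eqref{eq-duality} and using $\star_E\star_{E^*}=\sigma_{E^*}$ (which is \eqref{eq-starstar} with $E^*$ in place of $E$). Because $\sigma_{E^*}$ is a unitary that merely rescales each bidegree component by a sign and anticommutes with $\dbar_{c,E^*}$ and $\vartheta_{E^*}$, it preserves their domains; hence the four conditions defining $\dm(\Box_E)$, namely $f\in\dm(\dbar_E)\cap\dm(\dbar_E^*)$, $\dbar_E f\in\dm(\dbar_E^*)$, and $\dbar_E^* f\in\dm(\dbar_E)$, translate term by term, via Proposition~\ref{prop-basic}, into exactly the four conditions defining $\dm(\Box_{E^*}^c)$ for $\star_E f$. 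This gives the first assertion. To obtain \eqref{eq-laplacians} I would then compute $\star_E(\dbar_E\dbar_E^* f)$ and $\star_E(\dbar_E^*\dbar_E f)$ by applying the two displayed relations twice in succession; the intervening factors of $\sigma_{E^*}$ are moved past $\vartheta_{E^*}$ and $\dbar_{c,E^*}$ using the degree rule, and the accumulated signs cancel so that the two terms become $\vartheta_{E^*}\dbar_{c,E^*}\star_E f$ and $\dbar_{c,E^*}\vartheta_{E^*}\star_E f$, whose sum is $\Box_{E^*}^c\star_E f$.

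Finally, for \eqref{eq-babyserre} I would argue that $\star_E$, being a bijective isometry intertwining the self-adjoint operators $\Box_E$ and $\Box_{E^*}^c$ by \eqref{eq-laplacians}, carries $\ker\Box_E$ bijectively onto $\ker\Box_{E^*}^c$; intersecting with the bidegree $(p,q)$ component and using that $\star_E$ sends $L^2_{p,q}(\Omega,E)$ onto $L^2_{n-p,n-q}(\Omega,E^*)$ yields the stated (conjugate-linear) isomorphism, surjectivity being visible from the explicit inverse $\sigma_E\star_{E^*}$. Alternatively one can bypass the Laplacian identity and use the kernel description $\mathcal{H}_{p,q}(\Omega,E)=\ker\dbar_E\cap\ker\dbar_E^*\cap L^2_{p,q}(\Omega,E)$ directly: Proposition~\ref{prop-basic} gives $\dbar_E f=0\iff\vartheta_{E^*}\star_E f=0$ and $\dbar_E^* f=0\iff\dbar_{c,E^*}\star_E f=0$, so $\star_E$ identifies $\ker\dbar_E\cap\ker\dbar_E^*$ with $\ker\vartheta_{E^*}\cap\ker\dbar_{c,E^*}$.

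I expect the only real subtlety to be the sign bookkeeping together with the interchange of realizations. One must be scrupulous about the anticommutation of $\sigma_{E^*}$ with the degree $\pm 1$ operators $\dbar_{c,E^*}$ and $\vartheta_{E^*}$. The conceptual point is that the maximal realization $\dbar_E$ transfers to the maximal realization $\vartheta_{E^*}$, while the minimal realization $\dbar_E^*$ (the minimal realization of $\vartheta_E$) transfers to the minimal realization $\dbar_{c,E^*}$; it is precisely this interchange, built into Proposition~\ref{prop-basic}, that converts the $\Box$-Laplacian into the $\Box_c$-Laplacian. Checking that the four domain conditions match requires only the observation that $\sigma_{E^*}$ preserves the domains of $\dbar_{c,E^*}$ and $\vartheta_{E^*}$, which is routine but should be recorded.
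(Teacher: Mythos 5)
Your proposal is correct and takes essentially the same route as the paper: both derive \eqref{eq-laplacians} by composing the two intertwining relations \eqref{eq-duality} and \eqref{eq-duality2} from Proposition~\ref{prop-basic}, absorbing the resulting $\sigma_{E^*}$ factors via $\star_E\star_{E^*}=\sigma_{E^*}$ and the degree rule, and then reading off \eqref{eq-babyserre} from the kernel description of the harmonic spaces. Your explicit check of the four domain conditions and of the fact that $\sigma_{E^*}$ preserves the domains of $\dbar_{c,E^*}$ and $\vartheta_{E^*}$ is bookkeeping the paper leaves implicit, but the argument is the same.
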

\begin{proof}
On the space
\[\left\{f\in L^2_*(\Omega,E)\mid f\in\dm(\dbar_E),\dbar_Ef\in\dm(\dbar_E^*)\right\}\]
we have, using \eqref{eq-duality} and \eqref{eq-duality2},
\begin{align*}
\dbar_E^*\dbar_E&=-\star_{E^*}\dbar_{c,E^*}\star_E\star_{E^*}\vartheta_{E^*}\star_E\\
&= -\star_{E^*}\dbar_{c,E^*}\sigma_{E^*}\vartheta_{E^*}\star_E\\
&=\star_{E^*}\sigma_{E^*}\dbar_{c,E^*}\vartheta_{E^*}\star_E.
\end{align*}
Similarly, we have on
\[\left\{f\in L^2_*(\Omega,E)\mid f\in\dm(\dbar_E^*),\dbar_E^*f\in\dm(\dbar_E)\right\}\]
the relation
\[ \dbar_E\dbar_E^*=\star_{E^*}\sigma_{E^*}\vartheta_{E^*}\dbar_{c,E^*}\star_E.\]
Combining, we have on $\dm(\Box_E)$:
\[ \Box_E =\star_{E^*}\sigma_{E^*}\Box_{E^*}^c\star_E.\]
Equation \eqref{eq-laplacians} follows on pre-composing with $\star_E$ and using \eqref{eq-starstar}.
\end{proof}
It follows that the self-adjoint operators $\Box_E$ and $\Box_{E^*}^c$ are {\em isospectral}:
a number $\lambda\in\rl$ belongs to the spectrum of $\Box_E$ if and only if $\lambda$ belongs
to the spectrum of $\Box_{E^*}^c$. Let $\{E_\lambda\}_{\lambda\in\rl}$ be
a spectral family of orthogonal projections from $L^2_*(\Omega,E)$ to itself
(cf. \cite[Chapters VII,VIII]{rn}) such that we have the spectral representation
\[ \Box_E = \int_\rl \lambda d E_{\lambda}.\]
If $\{F_\lambda\}_{\lambda\in \rl}$ is defined by
 \[F_\lambda =\sigma_{E^*}\star_E E_\lambda \star_{E^*},\]
then $F_\lambda$ is an orthogonal projection on $L^2_*(\Omega,E^*)$, and
we have the spectral representation
\[ \Box_{E^*}^c= \int_\rl \lambda d F_\lambda.\]
These statements are purely formal consequences of
\eqref{eq-laplacians}.
\subsection{Closed-range property}
In order to apply $L^2$-theory to solve the $\dbar$-equation, we first need to show that
the $\dbar$-operator has closed range. In this section we consider the consequences of this
hypothesis on the $\dbar_c$ operator.

Recall that the notation $T:\mathsf{H_1}\dashrightarrow \mathsf{H_2}$ means
that  $T$ is a linear operator  defined on a linear subspace $\dm(T)$ of $\mathsf{H_1}$ and
taking values in $\mathsf{H_2}$. Further, for notational simplicity,
we will use $\dbar_E$ to
denote the restriction $\dbar_E|_{L^2_{p,q}(\Omega)}$, when $p,q$ are given, rather than
introduce new subscripts, and adopt the same convention for $\dbar_{c,E},\vartheta_E$, and $\dbar_E^*$.
We first note the following fact
\begin{lemma}\label{lem-oplist}
If any one of operators in  the following list of  Hilbert space operators has closed range,
it follows that all  the others also have closed range:
\begin{equation}\label{eq-oplist}
\begin{cases}\dbar_E:& L^2_{p,q}(\Omega,E)\dashrightarrow L^2_{p,q+1}(\Omega,E)\\
\dbar_E^*:& L^2_{p,q+1}(\Omega,E)\dashrightarrow L^2_{p,q}(\Omega,E)\\
\dbar_{c,E^*}:& L^2_{n-p,n-q-1}(\Omega,E^*)\dashrightarrow L^2_{n-p,n-q}(\Omega,E^*)\\
\vartheta_{E^*}: & L^2_{n-p,n-q}(\Omega,E^*)\dashrightarrow L^2_{n-p,n-q-1}(\Omega,E^*)\\
\end{cases}
\end{equation}
\end{lemma}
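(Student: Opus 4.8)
The plan is to establish the equivalence of the four closed-range conditions by chaining together two types of elementary reductions: passage to the Hilbert space adjoint, and conjugation by the Hodge-star isometry. Nothing deeper than functional-analytic generalities and the results already proved in $\S$\ref{sec-realizations} should be needed.

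First I would recall the basic closed-range theorem: a densely-defined closed operator $T:\mathsf{H_1}\dashrightarrow\mathsf{H_2}$ between Hilbert spaces has closed range if and only if its adjoint $T^*:\mathsf{H_2}\dashrightarrow\mathsf{H_1}$ has closed range (this is the Hilbert-space form of Banach's closed range theorem; see \cite{rn} or \cite{grubb}). Applying this to the pair furnished by Lemma~\ref{lem-maxminadjoint}, namely $\dbar_E$ and $\dbar_E^*=(\dbar_E)^*$ acting in the indicated bidegrees, immediately gives that the first operator in \eqref{eq-oplist} has closed range if and only if the second does. The same theorem, applied to the pair $\dbar_{c,E^*}$ and $\vartheta_{E^*}=(\dbar_{c,E^*})^*$ --- again mutually adjoint by Lemma~\ref{lem-maxminadjoint} --- shows that the third operator in \eqref{eq-oplist} has closed range if and only if the fourth does.

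It then remains to bridge the two pairs. Here I would invoke Proposition~\ref{prop-basic}(1), which asserts that $\star_E$ maps $\dm(\dbar_E^*)$ bijectively onto $\dm(\dbar_{c,E^*})$ and that $\dbar_E^*=-\star_{E^*}\dbar_{c,E^*}\star_E$. Consequently the range of $\dbar_E^*$ is precisely the image, under the conjugate-linear isometry $\star_{E^*}$, of the range of $\dbar_{c,E^*}$ (the overall sign is irrelevant). Since an isometry carries closed subspaces to closed subspaces, $\dbar_E^*$ has closed range if and only if $\dbar_{c,E^*}$ does. Concatenating the three equivalences just obtained shows that the four closed-range properties stand or fall together, which is the assertion of the lemma.

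I do not expect any genuine obstacle: every step is a direct citation of a tool already in place. The only points requiring care are the bookkeeping --- verifying that the bidegrees in \eqref{eq-oplist} are exactly those for which the adjoint relations of Lemma~\ref{lem-maxminadjoint} and the Hodge-star relation \eqref{eq-duality} apply --- and the (purely formal) remark that a conjugate-linear isometry preserves closedness of subspaces, so that the closed-range property is transported correctly across $\star_E$ and $\star_{E^*}$.
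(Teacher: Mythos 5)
Your proposal is correct and follows essentially the same route as the paper: the two adjoint pairs $(\dbar_E,\dbar_E^*)$ and $(\dbar_{c,E^*},\vartheta_{E^*})$ are handled by the closed-range theorem for adjoints, and the bridge between the pairs comes from Proposition~\ref{prop-basic}(1), exactly as in the paper's proof. The only (harmless) difference is in how the bridge is executed: the paper transfers the estimate $\norm{\dbar_E^*f}\geq C\norm{f}$ on $\ker(\dbar_E^*)^\perp$ using the norm identity $\norm{\dbar_E^*f}=\norm{\dbar_{c,E^*}(\star_E f)}$ and the characterization of closed range in \cite[Lemma~4.1.1]{cs}, whereas you transport the range itself through the conjugate-linear isometry $\star_{E^*}$ --- both are immediate consequences of \eqref{eq-duality}.
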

\begin{proof}
Thanks to the well-known fact that a closed densely-defined
operator has closed range if and only if its adjoint
has closed range (see  \cite[Theorem 1.1.1]{Ho2} or \cite[Lemma~4.1.1]{cs}), it follows that $\dbar_E$ has closed range if and only if $\dbar^*_E$ has closed range,
and that $\dbar_{c,E^*}$ has closed range if and only if $\vartheta_{E^*}$ has closed range.
To complete the proof, we need  to show that $\dbar_E^*$ has closed range if and only if $\dbar_{c,E^*}$ has closed range.
Now, \eqref{eq-duality} shows that  for $f\in \dm(\dbar^*_{E})$, we have
$\norm{\dbar^*_{E}f} = \norm{\dbar_{c,E^*}(\star_E f)}$, in particular,
$f\in \ker(\dbar_E^*)$ if and only if $\star_E f\in \ker(\dbar_{c,E^*})$.
This means that the inequality $\norm{\dbar_E^*f}\geq C \norm{f}$ holds for all $f\in \ker(\dbar_E^*)^\perp$
if and only if the inequality $\norm{\dbar_{c,E^*}g}\geq C\norm{g}$ holds for all $g\in\ker(\dbar_{c,E^*})^\perp$.
Again by \cite[Lemma~4.1.1]{cs} it follows that $\dbar_E^*$ has closed range if and only if $\dbar_{c,E^*}$
has closed range.
\end{proof}

\subsection{Duality of Cohomologies} We define the {\em $L^2$ cohomology} as the quotient vector space
\[ H^{p,q}_{L^2}(\Omega,E) =\frac{ \ker(\dbar_E)\cap L^2_{p,q}(\Omega,E)}{\im(\dbar_E)\cap L^2_{p,q}(\Omega,E)},\]
Similarly, the {\em $L^2$-cohomology with the minimal realization}
is defined to the space
\[ H^{p,q}_{c,L^2}(\Omega,E) =\frac{ \ker(\dbar_{c,E})\cap L^2_{p,q}(\Omega,E)}{\im(\dbar_{c,E})\cap L^2_{p,q}(\Omega,E)}.\]
If $\dbar_E$ (resp. $\dbar_{c,E}$) has closed range, $H^{p,q}_{L^2}(\Omega,E)$ (resp. $H^{p,q}_{c.L^2}(\Omega,E)$) is a  Hilbert space with the quotient norm.

Let
\[ [\cdot]: \ker(\dbar_E)\cap L^2_{p,q}(\Omega,E)\rightarrow H^{p,q}_{L^2}(\Omega,E)\]
and
 \[ [\cdot]_c: \ker(\dbar_{c.E})\cap L^2_{p,q}(\Omega,E)\rightarrow H^{p,q}_{c.L^2}(\Omega,E)\]
denote the respective natural projections onto the quotient spaces. The  following result  was first observed by Kodaira:
\begin{lemma}\label{lem-kodaira}Let $\eta$ (resp. $\eta_c$) denote the restriction of $[\cdot]$ (resp. $[\cdot]_c$) to the
vector space of $\dbar_E$-harmonic forms $\mathcal{H}_{p,q}(\Omega,E)$ (resp. the vector space of $\dbar_{c,E}$-harmonic
forms $\mathcal{H}_{p,q}^c(\Omega,E)$.) Then

(i) $\eta$ (resp. $\eta_c$) is injective.

(ii) If $\eta$ (resp. $\eta_c$) is also surjective, then $\im(\dbar_E:L^2_{p,q-1}(\Omega,E)\dashrightarrow L^2_{p,q}(\Omega,E))$
(resp. $\im(\dbar_{c,E}:L^2_{p,q-1}(\Omega,E)\dashrightarrow L^2_{p,q}(\Omega,E))$) is closed.

\end{lemma}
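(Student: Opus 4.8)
The plan is to treat both statements uniformly, since they are instances of the same abstract situation: a pair of closed, densely-defined Hilbert space operators $S\colon L^2_{p,q-1}(\Omega,E)\dashrightarrow L^2_{p,q}(\Omega,E)$ and $T\colon L^2_{p,q}(\Omega,E)\dashrightarrow L^2_{p,q+1}(\Omega,E)$ with $T\circ S=0$ (so that $\im(S)\subseteq\ker(T)$), together with the harmonic space realized as $\ker(T)\cap\ker(S^*)$. For $\eta$ one takes $S=\dbar_E$ and $T=\dbar_E$ with $S^*=\dbar_E^*$; replacing these by $\dbar_{c,E}$ and $\vartheta_E$ yields the $\eta_c$ case verbatim. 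I would therefore prove the assertion once in this abstract form.

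For (i) I would show that an exact harmonic form vanishes. If $h\in\mathcal{H}_{p,q}(\Omega,E)$ satisfies $\eta(h)=0$, then $h=\dbar_E u$ for some $u\in\dm(\dbar_E)\cap L^2_{p,q-1}(\Omega,E)$; since $h$ is harmonic it lies in $\dm(\dbar_E^*)$ with $\dbar_E^* h=0$, and the adjoint relation gives
\[
\norm{h}^2=(\dbar_E u,h)=(u,\dbar_E^* h)=0.
\]
Hence $h=0$, and $\eta$ is injective.

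For (ii) the engine is the orthogonal decomposition
\[
\ker(T)=\overline{\im(S)}\oplus\mathcal{H}_{p,q}(\Omega,E),
\]
which holds unconditionally: since $\overline{\im(S)}^{\perp}=\ker(S^*)$ and $\overline{\im(S)}$ is a closed subspace of the closed space $\ker(T)$, its orthogonal complement inside $\ker(T)$ is exactly $\ker(T)\cap\ker(S^*)=\mathcal{H}_{p,q}(\Omega,E)$. Surjectivity of $\eta$ says precisely that $\ker(T)=\im(S)+\mathcal{H}_{p,q}(\Omega,E)$. I would then take any $w\in\overline{\im(S)}\subseteq\ker(T)$, write $w=s+h$ with $s\in\im(S)$ and $h\in\mathcal{H}_{p,q}(\Omega,E)$, and observe that $h=w-s$ lies in $\overline{\im(S)}$ while being orthogonal to it; thus $h=0$ and $w=s\in\im(S)$. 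This proves $\overline{\im(S)}\subseteq\im(S)$, i.e. $\im(S)$ is closed.

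The argument is entirely soft, with no substantial computation, so there is no single hard step. The one point demanding care—and where I would be most careful—is the bookkeeping of adjoints: I must invoke Lemma~\ref{lem-maxminadjoint} to know that $\dbar_E^*$ is the genuine Hilbert-space adjoint of $\dbar_E$ (and that $\vartheta_E=(\dbar_{c,E})^*$ in the minimal case), so that the identity $\overline{\im(S)}^{\perp}=\ker(S^*)$ refers to precisely the operators appearing in the definitions of $\mathcal{H}_{p,q}(\Omega,E)$ and $\mathcal{H}_{p,q}^c(\Omega,E)$. With this identification secured, both parts follow at once.
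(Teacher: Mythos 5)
Your proposal is correct, and your part (i) coincides with the paper's argument: the paper writes an element of $\ker(\eta)$ as $\dbar_E g$ with $\dbar_E^*(\dbar_E g)=0$ and computes $0=(\dbar_E^*(\dbar_E g),g)=\norm{\dbar_E g}^2$, which is the same pairing you perform in the form $\norm{h}^2=(\dbar_E u,h)=(u,\dbar_E^*h)=0$.

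For part (ii) you take a genuinely different, though closely related, route. The paper uses the bijection $\eta$ to transport the Hilbert structure of $\mathcal{H}_{p,q}(\Omega,E)$ onto $H^{p,q}_{L^2}(\Omega,E)$, writes an arbitrary element of $\ker(\dbar_E)$ as $f+\dbar_E g$ with $f$ harmonic, uses the Pythagorean identity $\norm{f+\dbar_E g}^2=\norm{f}^2+\norm{\dbar_E g}^2$ (valid since $f\in\ker(\dbar_E^*)$ is orthogonal to $\im(\dbar_E)$) to conclude that $[\cdot]$ is bounded, and then gets closedness of $\im(\dbar_E)=\ker[\cdot]$ as the kernel of a bounded operator. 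You instead invoke the unconditional orthogonal decomposition $\ker(T)=\overline{\im(S)}\oplus\bigl(\ker(T)\cap\ker(S^*)\bigr)$, coming from $\overline{\im(S)}^{\perp}=\ker(S^*)$, and show that surjectivity of $\eta$ forces $\overline{\im(S)}\subseteq\im(S)$ directly. Both proofs rest on the same orthogonality of the harmonic space to the range, but yours attacks the closure head-on, avoids introducing a norm on the quotient, and handles $\eta$ and $\eta_c$ uniformly in the abstract setting of closed operators $S,T$ with $TS=0$, where the paper merely remarks that the proof for $\eta_c$ is similar. What the paper's version buys in exchange is the explicit bound $\norm{[u]}\leq\norm{u}$, exhibiting $[\cdot]$ as (a restriction of) the orthogonal projection onto the harmonic space — information that feeds naturally into the Hilbert-space identifications used in Theorem~\ref{thm-serre}. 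Your appeal to Lemma~\ref{lem-maxminadjoint} to identify $S^*$ with $\dbar_E^*$ (resp.\ $\vartheta_E$) is exactly the right bookkeeping; the only point you leave unstated is that $T\circ S=0$ holds for both the maximal and the minimal realizations, which is standard (for the minimal one, approximate by compactly supported smooth forms and use $\dbar^2=0$) and is in any case already implicit in the paper's definitions of the quotients $H^{p,q}_{L^2}(\Omega,E)$ and $H^{p,q}_{c,L^2}(\Omega,E)$.
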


\begin{proof} We write the proof only for the operator $\eta$. The proof for $\eta_c$ is similar.

 (i) Note that if $q=0$ this is obvious, since $\im\left(\dbar_E:L^2_{p,q-1}(\Omega,E)\dashrightarrow L^2_{p,q}(\Omega,E)\right)=0$. Assuming $q\geq 1$, we note that $\ker(\eta)=\ker(\dbar_E)\cap\ker(\dbar_E^*)\cap \im(\dbar_E)$, and therefore a form in $\ker(\eta)$ can be written as $\dbar g$, with $\dbar^*(\dbar g)=0$.
Then
\begin{align*}0&=(\dbar_E^*(\dbar_E g),g)\\&=\norm{\dbar g}^2.
\end{align*}

(ii) Since $\eta$ is an isomorphism, we can identify the harmonic space $\mathcal{H}_{p,q}(\Omega,E)$ with the
cohomology space $H^{p,q}_{L^2}(\Omega,E)$. Since $\mathcal{H}_{p,q}(\Omega,E)$  is a closed subspace of the Hilbert
space $L^2_{p,q}(\Omega,E)$, the space $H^{p,q}_{L^2}(\Omega,E)$ also becomes a Hilbert space. We can think of
the map $[\cdot]$ as an operator from the Hilbert space $\ker(\dbar_E)\cap L^2_{p,q}(\Omega,E)$ to the Hilbert space
$H^{p,q}_{L^2}(\Omega,E)$. Since $\eta$ is surjective, every element of $\ker(\dbar_E)$ can be written as
$f+\dbar g$, where $f\in \mathcal{H}_{p,q}(\Omega,E)$. According to the identification of
$\mathcal{H}_{p,q}(\Omega,E)$ and $H^{p,q}_{L^2}(\Omega,E)$, we have $[f+\dbar_E g]=f$. Since $\norm{f+\dbar_E g}^2=
\norm{f}^2+\norm{\dbar g}^2\geq \norm{f}^2$, so that $\norm{[f+\dbar g]}\leq \norm{f+\dbar g}$ and it follows that
$[\cdot]$ is in fact a bounded map. Therefore $\ker[.]=\im(\dbar_E)\cap L^2_{p,q}(\Omega,E)$ is closed, which was
to be shown.
\end{proof}

\begin{theorem}[$L^2$ Serre duality on non-compact manifolds]
\label{thm-serre} The following are equivalent:\\
(1) the two operators
\[ L^2_{p,q-1}(\Omega,E)\stackrel{\dbar_E}\dashrightarrow L^2_{p,q}(\Omega,E)\stackrel{\dbar_E}\dashrightarrow L^2_{p,q+1}(\Omega,E)\]
have closed range.

(2) the map $\star_E: L^2_{p,q}(\Omega,E)\rightarrow L^2_{n-p,n-q}(\Omega,E^*)$ induces
a   conjugate-linear
isomorphism of  Hilbert spaces
\begin{equation}\label{eq-serre} \tau=\eta_c\circ\star_E\circ\eta^{-1}: H^{p,q}_{L^2}(\Omega,E)\rightarrow H^{n-p,n-q}_{c,L^2}(\Omega,E^*).\end{equation}
Consequently, we can identify the Hilbert space dual of $H^{p,q}_{L^2}(\Omega,E)$ with $H^{n-p,n-q}_{c,L^2}(\Omega,E^*)$
\end{theorem}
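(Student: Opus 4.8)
The plan is to deduce the theorem from the Kodaira principle that harmonic forms represent $L^2$-cohomology exactly when the relevant $\dbar$-map has closed range, combined with the duality dictionary already assembled in Lemma~\ref{lem-oplist} and Theorem~\ref{thm-babyduality}. The organizing observation is that the two closed-range hypotheses in (1) play asymmetric roles in the complex at $(p,q)$: the \emph{incoming} map $\dbar_E\colon L^2_{p,q-1}\to L^2_{p,q}$ governs whether $\eta$ is an isomorphism on the $E$-side, whereas the \emph{outgoing} map $\dbar_E\colon L^2_{p,q}\to L^2_{p,q+1}$ should instead be matched, through Lemma~\ref{lem-oplist}, to the incoming map of the $\dbar_{c,E^*}$-complex at $(n-p,n-q)$, hence to the isomorphism property of $\eta_c$ on the $E^*$-side. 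Carrying this bookkeeping through is what makes (1) equivalent to the single statement that $\eta$ and $\eta_c$ are both isomorphisms.

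First I would establish the key intermediate fact: $\eta$ is a bijective, hence topological, isomorphism of Hilbert spaces if and only if the incoming operator $\dbar_E\colon L^2_{p,q-1}(\Omega,E)\to L^2_{p,q}(\Omega,E)$ has closed range. Injectivity of $\eta$ is Lemma~\ref{lem-kodaira}(i) and the implication ``$\eta$ surjective $\Rightarrow$ incoming range closed'' is Lemma~\ref{lem-kodaira}(ii), so only the converse is at issue. Here I would invoke the abstract orthogonal decomposition. Since $\dbar_E\dbar_E=0$, the range $M=\im(\dbar_E\colon L^2_{p,q-1}\to L^2_{p,q})$ is contained in the closed subspace $K=\ker(\dbar_E)\cap L^2_{p,q}(\Omega,E)$; the closed-range hypothesis makes $M$ itself closed, while $M^\perp=\ker(\dbar^*_E)$ for the adjoint $\dbar^*_E=(\vartheta_E)_{\min}$ by Lemma~\ref{lem-maxminadjoint}. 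Taking the orthocomplement of $M$ inside the Hilbert space $K$ gives $K=M\oplus(K\cap M^\perp)=\im(\dbar_E)\oplus\mathcal{H}_{p,q}(\Omega,E)$, so that every $\dbar_E$-closed $(p,q)$-form is cohomologous to a harmonic one and $\eta$ is onto. Applied verbatim to the minimal realization, the same argument shows that $\eta_c$ is an isomorphism if and only if the incoming map $\dbar_{c,E^*}\colon L^2_{n-p,n-q-1}\to L^2_{n-p,n-q}$ has closed range.

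With this in hand the equivalence is a matter of matching conditions. Assuming (1), the incoming map at $(p,q)$ is closed, so $\eta$ is an isomorphism; the outgoing map at $(p,q)$ is closed, so by Lemma~\ref{lem-oplist} (applied at bidegree $(p,q)$) the incoming map $\dbar_{c,E^*}$ at $(n-p,n-q)$ is closed, so $\eta_c$ is an isomorphism. Then $\tau=\eta_c\circ\star_E\circ\eta^{-1}$ is the composite of the Hilbert-space isomorphisms $\eta^{-1}$ and $\eta_c$ with the conjugate-linear isometric isomorphism $\star_E\colon\mathcal{H}_{p,q}(\Omega,E)\to\mathcal{H}^c_{n-p,n-q}(\Omega,E^*)$ furnished by Theorem~\ref{thm-babyduality}, and is therefore a conjugate-linear topological isomorphism, giving (2). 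Conversely, the very formula in (2) presupposes that $\eta^{-1}$ exists and that $\tau$ maps onto $H^{n-p,n-q}_{c,L^2}(\Omega,E^*)$; since $\star_E$ and $\eta_c$ are injective this forces both $\eta$ and $\eta_c$ to be isomorphisms, and running the previous equivalences backwards (again using Lemma~\ref{lem-oplist} to pass from the $E^*$-side incoming condition to the $E$-side outgoing condition) recovers (1).

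Finally, the identification of duals is Hilbert-space formal: Riesz representation gives a conjugate-linear isometric isomorphism $R\colon H^{p,q}_{L^2}(\Omega,E)\to (H^{p,q}_{L^2}(\Omega,E))^*$, and $\tau\circ R^{-1}$ is then a complex-linear isomorphism identifying the dual $(H^{p,q}_{L^2}(\Omega,E))^*$ with $H^{n-p,n-q}_{c,L^2}(\Omega,E^*)$. I expect the main obstacle to be the converse half of the intermediate fact, namely producing harmonic representatives (surjectivity of $\eta$) from closed range alone: since $\Omega$ need not be complete, no compactness or global elliptic regularity is available, and one must rely solely on the abstract orthogonal decomposition together with the principle that a closed densely-defined operator has closed range precisely when its adjoint does. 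The accompanying subtlety is to notice that each of $\eta,\eta_c$ depends only on its own incoming map, so that the two genuinely distinct hypotheses in (1) are absorbed separately, one directly and one through the duality of Lemma~\ref{lem-oplist}.
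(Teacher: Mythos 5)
Your proposal is correct and follows essentially the same route as the paper: the commuting square relating $\star_E$ on harmonic spaces (Theorem~\ref{thm-babyduality}, equation \eqref{eq-babyserre}) to $\tau$ on cohomology, with Lemma~\ref{lem-kodaira} tying the isomorphism property of $\eta$ and $\eta_c$ to closed range of the respective incoming maps, and Lemma~\ref{lem-oplist} converting the closed-range condition on $\dbar_{c,E^*}$ at bidegree $(n-p,n-q)$ into the outgoing closed-range hypothesis on $\dbar_E$ at $(p,q)$. The one point where you go beyond the paper's text is welcome: you explicitly prove the converse half of the Kodaira correspondence (closed range of the incoming map implies surjectivity of $\eta$, via the orthogonal decomposition $\ker(\dbar_E)=\im(\dbar_E)\oplus\mathcal{H}_{p,q}(\Omega,E)$ coming from $\im(\dbar_E)^\perp=\ker(\dbar_E^*)$ and Lemma~\ref{lem-maxminadjoint}), a step the paper relies on implicitly but which Lemma~\ref{lem-kodaira} as stated does not contain.
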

We note here that the condition (1) is in fact the necessary and sufficient condition for the existence of
the $\dbar$-Neumann operator $\mathsf{N}_{p,q}^E$, defined as the inverse (modulo kernel) of the $\Box_E$ operator
on $(p,q)$-forms.
\begin{proof}
In the diagram
\[\begin{CD}
\mathcal{H}_{p,q}(\Omega,E)  @>\star_E>> \mathcal{H}^c_{n-p,n-q}(\Omega,E^*)\\
@V{\eta}VV  					@V{\eta_c}VV\\
H^{p,q}_{L^2}(\Omega,E) @>\tau>> H^{n-p,n-q}_{c,L^2}(\Omega,E^*)\\
\end{CD}\]
the map $\star_E$ is known to be an isomorphism from $\mathcal{H}_{p,q}(\Omega,E)$ to
$\mathcal{H}^c_{n-p,n-q}(\Omega,E)$ by Theorem~\ref{thm-babyduality} (see equation \eqref{eq-babyserre}.)
Therefore, the map $\tau$ will also be an isomorphism, if and only if, both $\eta$ and $\eta_c$ are isomorphisms. Thanks
to Lemma~\ref{lem-kodaira} this is equivalent to the two maps
$\dbar_E: L^2_{p,q-1}(\Omega,E)\dashrightarrow L^2_{p,q}(\Omega,E)$ and $\dbar_{c,E^*}: L^2_{n-p,n-q-1}(\Omega,E^*)\dashrightarrow L^2_{n-p,n-q}(\Omega,E^*)$ having closed range. Since by Lemma~\ref{lem-oplist},
the second map has closed range if and only if $\dbar_E: L^2_{p,q}(\Omega,E)\rightarrow L^2_{p,q+1}(\Omega,E)$ has closed range, the result follows.
\end{proof}
\subsection{Duality of the $\dbar$-problem and the $\dbar_c$-problem} We can use the duality principle to solve
the equation $\dbar_c u=f$, provided we know how to solve $\dbar u=f$:
\begin{theorem}\label{thm-dbarc}
Suppose that for some $0\le p\le n$ and $0\le q\le n-1$, the operator $\dbar_{E^*}:L^2_{n-p,n-q-1}(\Omega,E^*)\dashrightarrow L^2_{n-p,n-q}(\Omega,E^*)$ has closed range.  Then the range  $\im(\dbar_{c,E})\cap L^2_{p,q+1}(\Omega,E)$ is closed.
The condition that $f\in \im(\dbar_{c,E})\cap L^2_{p,q+1}(\Omega,E)$ is equivalent to
the following: for every  $g\in \ker(\dbar_{E^*})\cap L^2_{n-p,n-q-1}(\Omega,E^*)$, we have
\begin{equation}\label{eq-fwg} \int_\Omega f\wedge g =0.\end{equation}
If $\Omega$ is a relatively compact pseudoconvex domain in a  Stein manifold and $q\not=n-1$,  it is further equivalent to the condition $\dbar_{c,E}f=0$.
\end{theorem}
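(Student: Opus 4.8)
The plan is to establish the three assertions in turn, drawing throughout on the Hodge-star dictionary of Proposition~\ref{prop-basic} and on the adjoint relations recorded just after it, namely that $\dbar_{c,E}$ and $\vartheta_E$ are mutually Hilbert-space adjoint.

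For the closed-range statement I would simply invoke Lemma~\ref{lem-oplist}, but with the bundle $E$ replaced by $E^*$ and the bidegree taken to be $(n-p,n-q-1)$. Tracing the four operators of \eqref{eq-oplist} through this substitution, the first becomes exactly the hypothesized map $\dbar_{E^*}\colon L^2_{n-p,n-q-1}(\Omega,E^*)\dashrightarrow L^2_{n-p,n-q}(\Omega,E^*)$, while the third becomes $\dbar_{c,E}\colon L^2_{p,q}(\Omega,E)\dashrightarrow L^2_{p,q+1}(\Omega,E)$. Since the lemma asserts that closed range of any one operator forces it on all the others, the hypothesis at once yields that $\dbar_{c,E}$ has closed range, i.e. that $\im(\dbar_{c,E})\cap L^2_{p,q+1}(\Omega,E)$ is closed.

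For the pairing characterization the key is the closed-range theorem: since $\dbar_{c,E}$ has closed range and its Hilbert-space adjoint is $\vartheta_E$, we have $\im(\dbar_{c,E})\cap L^2_{p,q+1}(\Omega,E)=\bigl(\ker(\vartheta_E)\cap L^2_{p,q+1}(\Omega,E)\bigr)^\perp$. It then remains only to translate this orthogonality into the wedge pairing. Writing $g=\star_E h$ for $h\in L^2_{p,q+1}(\Omega,E)$, the defining relation \eqref{eq-star} of the Hodge star gives $(f,h)=\int_\Omega f\wedge\star_E h=\int_\Omega f\wedge g$, while Proposition~\ref{prop-basic}(2) applied to $E^*$, together with $\star_{E^*}\star_E=\sigma_E$ from \eqref{eq-starstar}, shows that $h\in\ker(\vartheta_E)$ if and only if $g\in\ker(\dbar_{E^*})$. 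As $\star_E$ is a conjugate-linear isometry of $L^2_{p,q+1}(\Omega,E)$ onto $L^2_{n-p,n-q-1}(\Omega,E^*)$ carrying $\ker(\vartheta_E)$ bijectively onto $\ker(\dbar_{E^*})$, the orthogonality $f\perp\ker(\vartheta_E)$ is precisely condition \eqref{eq-fwg}.

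For the last equivalence, the implication $f\in\im(\dbar_{c,E})\Rightarrow\dbar_{c,E}f=0$ is immediate from $\dbar_{c,E}^2=0$, so the content is the converse, which amounts to the vanishing of $H^{p,q+1}_{c,L^2}(\Omega,E)$. I would deduce this from the duality: by Theorem~\ref{thm-serre} applied to $E^*$ in bidegree $(n-p,n-q-1)$ (its closed-range hypothesis being available on such $\Omega$ by Hörmander's estimates) this group is conjugate-linearly isomorphic to the ordinary $L^2$ $\dbar$-cohomology $H^{n-p,n-q-1}_{L^2}(\Omega,E^*)$, of anti-holomorphic degree $n-q-1$. Here the hypothesis $q\neq n-1$ forces $n-q-1\geq 1$, placing us in positive degree, where Hörmander's $L^2$ existence theorem on a relatively compact pseudoconvex domain in a Stein manifold gives vanishing. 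Equivalently, and more concretely, one may argue directly from \eqref{eq-fwg}: every $\dbar_{E^*}$-closed $g$ of bidegree $(n-p,n-q-1)$ is then $\dbar_{E^*}$-exact, say $g=\dbar_{E^*}h$, and the Leibniz rule \eqref{eq-leibniz} together with $\dbar_{c,E}f=0$ (approximating $f$ by compactly supported forms to annihilate the boundary term) gives $\int_\Omega f\wedge g=0$. The main obstacle is exactly this last step: one must supply the positive-degree $L^2$-solvability of $\dbar$ on $\Omega$ and justify the boundary-term-free integration by parts for the minimal realization. It is precisely the inequality $n-q-1\geq1$ that both makes the Hörmander vanishing available and excludes the degree $q=n-1$, where the dual group $H^{n-p,0}_{L^2}(\Omega,E^*)$ of $L^2$ holomorphic $(n-p,0)$-forms is in general nonzero and the equivalence fails.
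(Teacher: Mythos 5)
Your proof is correct, and for the second and third assertions it follows the paper's own argument essentially verbatim: once the range is closed, $\im(\dbar_{c,E})=\ker(\vartheta_E)^\perp$ because $(\dbar_{c,E})^*=\vartheta_E$ (Lemma~\ref{lem-maxminadjoint}), and this orthogonality is translated through the Hodge star into the pairing \eqref{eq-fwg}; the Stein case is handled, exactly as in the paper, by H\"ormander vanishing of $H^{n-p,n-q-1}_{L^2}(\Omega,E^*)$ in positive degree together with the Serre duality of Theorem~\ref{thm-serre}. The one place you genuinely diverge is the closed-range assertion. You obtain it by citing Lemma~\ref{lem-oplist} with $E$ replaced by $E^*$ and bidegree $(n-p,n-q-1)$ --- a correct reading, since the identification $(E^*)^*\cong E$ makes the third operator in \eqref{eq-oplist} exactly $\dbar_{c,E}:L^2_{p,q}(\Omega,E)\dashrightarrow L^2_{p,q+1}(\Omega,E)$. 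The paper instead takes a bounded solution operator $K$ for $\dbar_{E^*}$ (available by Hilbert space theory from the closed-range hypothesis) and constructs the explicit operator $K_c=-\star_{E^*}K^*\star_E$, verifying $\dbar_{c,E}K_cf=f$ on $\im(\dbar_{c,E})$ with $\norm{u}\le C\norm{f}$. The mathematical content is the same (Lemma~\ref{lem-oplist} is itself proved from the identity \eqref{eq-duality}), but the paper's construction buys something your citation leaves implicit: a concrete solution operator for $\dbar_c$ with a norm estimate, of precisely the shape reused in \S\ref{sec-extension} (compare $T\tilde f=-\star_{(2t)}\dbar N_{2t}(\star_{(-2t)}\dbar\tilde f)$ in the proof of Theorem~\ref{thm-old3p5}); conversely, your route is shorter, and your supplementary direct verification of \eqref{eq-fwg} from $\dbar_{c,E}f=0$ --- solving $g=\dbar_{E^*}h$ in $L^2$ (possible since $n-q-2\ge 0$ when $q\ne n-1$) and integrating by parts against compactly supported approximations of $f$ --- is a sound, more elementary alternative to invoking the full duality theorem in that last step.
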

\begin{proof} Since $\dbar_{E^*}$ has closed range on $L^2_{n-p,n-q-1}(\Omega,E^*)$, from Hilbert space theory,
it follows that there is a bounded solution operator $K$ from $L^2_{n-p,n-q}(\Omega,E^*)$ to $L^2_{n-p,n-q-1}(\Omega,E^*)$
such that $\dbar_{E^*} K= \mathbb{I}$ (the identity map) on $\im(\dbar_{E^*})$, and $K\dbar_{E^*}= \mathbb{I}- B$,
on $\dm(\dbar_{E^*})$ where $B:L^2_{n-p,n-q-1}(\Omega,E^*)\rightarrow \ker(\dbar_{E^*})\cap L^2_{n-p,n-q-1}(\Omega,E^*)$
is the generalized Bergman projection. Set \[K_c= -\star_{E^*} K^* \star_{E},\] where $K^*$ denotes the bounded operator
from $ L^2_{n-p,n-q-1}(\Omega,E^*)$ to $L^2_{n-p,n-q}(\Omega,E^*)$ which is
the Hilbert space adjoint of the operator $K$ defined above.

Now let $f\in\im(\dbar_{c,E})\cap L^2_{p,q+1}(\Omega,E)$. Note that, this means $\star_E f\in \im(\dbar_{E^*}^*)=\ker(\dbar_{E^*})^\perp$. It follows that $B(\star_E f)=0$.

 We set $u=K_c f$. This is well-defined, since
$\star_{E}f\in L^2_{n-p,n-q-1}(\Omega,E^*)$, which is the domain of $K^*$, and we have $\norm{u}\leq C \norm{f}$.
Also, from \eqref{eq-duality} we have $\dbar_{c,E}\star_{E^*}= -\left(\star_E\right)^{-1}\dbar_{E^*}^*$. Therefore,
\begin{align*}
\dbar_{c,E}u &= -\dbar_{c,E}\star_{E^*} K^*\star_E f\\
&=\left(\star_E\right)^{-1}\dbar_{E^*}^* K^* \star_E f\\
&= \left(\star_E\right)^{-1}\left(K\dbar_{E^*}\right)^* \star_E f &\text{since $K$ is bounded}\\
&=\left(\star_E\right)^{-1}\left(\mathbb{I}-B\right)^*\star_E f\\
&= f- \left(\star_E\right)^{-1}B (\star_E f)&\text{ $B$ is self-adjoint}\\
&= f.
\end{align*}

We note that $g\in \ker(\dbar_{E^*})\cap L^2_{n-p,n-q-1}(\Omega,E^*)$ if and only if
$\star_{E^*}g\in \ker(\vartheta_{E})\cap L^2_{p,q+1}(\Omega,E)$. Since $\im(\dbar_{c,E})=\ker(\dbar_{c,E}^*)^\perp
=\ker(\vartheta_E^*)^\perp$, it follows that $f\in \im(\dbar_{c,E})$ if and only if for each $g\in \ker(\dbar_{E^*})$
we have $(f,\star_{E^*}g)=0$, i.e.,
\begin{align*}
0&=\int_\Omega f\wedge \star_E\star_{E^*}g\\
&=\int_\Omega f\wedge \sigma_{E^*}g\\
&=(-1)^{2n-p-q-1}\int_\Omega f\wedge g,
\end{align*}
which proves \eqref{eq-fwg}.

Now assume $\Omega$ is in a  Stein manifold.   Then, we know that $H^{p,q}_{L^2}(\Omega,E^*)=0$, provided $q\not=0$.
By the $L^2$ Serre duality, $H^{p,q+1}_{c,L^2}(\Omega,E)=0$, unless $q+1=n$. In other words, if $q\not=n-1$,
if $f\in \im(\dbar_{c,E})\cap L^2_{p,q+1}(\Omega,E)$, then $f\in \ker(\dbar_{c,E}:L^2_{p,q}(\Omega,E)\dashrightarrow
L^2_{p,q+1}(\Omega,E))$.  This completes the proof.
\end{proof}
\subsection{Duality of realizations of the $\dbar$ operator}\label{sec-dualrealizations}
We now discuss an abstract version of $L^2$-duality which generalizes the duality of
$\dbar_E$ and $\dbar_{c,E^*}$ discussed in the previous sections. The proofs of the
statements made below are  parallel to the proofs of corresponding statements (for $\dbar_E$ and
$\dbar_{c,E^*}$) in the previous sections, and are omitted.

Let $E$ be a vector bundle over $\Omega$ and let $D:L^2_*(\Omega,E)\dashrightarrow L^2_*(\Omega,E)$ be a
realization of $\dbar_E$, acting on $E$-valued forms.
Then $D$ satisfies $\dbar_{c,E}\subseteq D \subseteq \dbar_E.$
We define an operator $D^\vee$ on the Hilbert Space $L^2_*(\Omega,E^*)$ by setting
\[ D^\vee = \star_{E}\, D^*\, \star_{E^*},\]
where $D^*:L^2_*(\Omega,E)\dashrightarrow L^2_*(\Omega,E)$ is the Hilbert space adjoint of the operator
$D$. Then the following is easy to prove using relations \eqref{eq-formaladj} and \eqref{eq-formaladj2}:
\begin{lemma}\hfill
\begin{enumerate}
\item $D^\vee$ is a realization of the operator $\dbar_{E^*}$ on the Hilbert space $L^2_*(\Omega,E^*)$, and
its domain is $\star_E(\dm(D^*))$.
\item $(\dbar_E)^\vee= \dbar_{c,E^*}$ and $(\dbar_{c,E})^\vee =\dbar_{E^*}$.
\item The map $D\mapsto D^\vee$ is a one-to-one correspondence of the {\em closed} realizations of $\dbar_E$ with
the closed realizations of $\dbar_{E^*}$.
\end{enumerate}
\end{lemma}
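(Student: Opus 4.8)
The plan is to reduce the entire lemma to Proposition~\ref{prop-basic} and Lemma~\ref{lem-maxminadjoint}, viewing the assignment $D\mapsto D^\vee$ as conjugation of the Hilbert-space adjoint by the (conjugate-linear) Hodge isometries $\star_E,\star_{E^*}$. I would establish (2) first, since it is both the special case and the computational core. Note that $\dbar_{c,E}\subseteq D\subseteq\dbar_E$ forces $D$ to be densely defined, so $D^*$ exists. For $D=\dbar_E$, Lemma~\ref{lem-maxminadjoint} gives $(\dbar_E)^*=\dbar_E^*$; substituting the identity $\dbar_E^*=-\star_{E^*}\dbar_{c,E^*}\star_E$ of \eqref{eq-duality} into $(\dbar_E)^\vee=\star_E\dbar_E^*\star_{E^*}$ and collapsing each adjacent pair of Hodge stars via \eqref{eq-starstar} produces $-\sigma_{E^*}\dbar_{c,E^*}\sigma_{E^*}$; since $\dbar_{c,E^*}$ has degree $+1$, the sign rule following \eqref{eq-sigmadef} gives $\sigma_{E^*}\dbar_{c,E^*}\sigma_{E^*}=-\dbar_{c,E^*}$, whence $(\dbar_E)^\vee=\dbar_{c,E^*}$. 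Symmetrically, for $D=\dbar_{c,E}$ one uses $(\dbar_{c,E})^*=\vartheta_E$ and the version of \eqref{eq-duality2} for the bundle $E^*$, namely $\dbar_{E^*}=\star_E\vartheta_E\star_{E^*}$, to conclude $(\dbar_{c,E})^\vee=\dbar_{E^*}$.

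For (1), I would take adjoints of $\dbar_{c,E}\subseteq D\subseteq\dbar_E$. Passage to adjoints reverses inclusions, so $\dbar_E^*\subseteq D^*\subseteq\vartheta_E$. Conjugating this chain by the fixed map $\star_E(\cdot)\star_{E^*}$ preserves inclusions of graphs, giving $(\dbar_E)^\vee\subseteq D^\vee\subseteq(\dbar_{c,E})^\vee$; by part (2) the two extremes are $\dbar_{c,E^*}$ and $\dbar_{E^*}$, so $\dbar_{c,E^*}\subseteq D^\vee\subseteq\dbar_{E^*}$, which is exactly the assertion that $D^\vee$ is a realization of $\dbar_{E^*}$. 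For the domain, the composite $\star_E D^*\star_{E^*}$ has domain $\{g:\star_{E^*}g\in\dm(D^*)\}$; writing $\star_{E^*}^{-1}=\sigma_{E^*}\star_E$ (from \eqref{eq-starstar}) and using that $\dm(D^*)$ is invariant under the sign operator $\sigma_{E^*}$ (it is graded, because $\dbar_E$ respects the bidegree decomposition of $L^2_*(\Omega,E)$), this set equals $\star_E(\dm(D^*))$.

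For (3), I would first check that $D^\vee$ is closed whenever $D$ is: the adjoint $D^*$ is automatically closed, and conjugation by the bounded bijective isometries $\star_E,\star_{E^*}$ carries its graph homeomorphically onto that of $D^\vee$. It then remains to exhibit a two-sided inverse, and the economical route is to prove that $\vee$ is an \emph{involution} under the identification $(E^*)^*\cong E$, that is, $(D^\vee)^\vee=\star_{E^*}(D^\vee)^*\star_E=D$. The crux — and the step I expect to be the main obstacle — is the adjoint computation $(D^\vee)^*=\star_{E^*}^{-1}D\star_E^{-1}$, which must be handled carefully because $\star_E$ and $\star_{E^*}$ are \emph{conjugate}-linear. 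Concretely, applying the anti-unitary identity $\langle\star_F a,b\rangle=\langle\star_F^{-1}b,a\rangle$ for $F=E$ and then for $F=E^*$, together with $D^{**}=D$ (valid as $D$ is closed and densely defined), transfers $D^*$ across both Hodge stars to land on $D$; the two complex conjugations introduced by the two anti-unitaries cancel, which is precisely why the outcome is $\cx$-linear and equals $D$ exactly, with no leftover sign or conjugation.

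Granting this, $(D^\vee)^\vee=D$, and applying the same identity with $E$ replaced by $E^*$ gives $(\tilde D^\vee)^\vee=\tilde D$ for every closed realization $\tilde D$ of $\dbar_{E^*}$. Since part (1) shows that $\vee$ sends closed realizations of $\dbar_E$ to closed realizations of $\dbar_{E^*}$ and conversely, these two maps are mutually inverse, establishing the claimed one-to-one correspondence.
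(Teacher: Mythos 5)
Your proof is correct and is essentially the paper's own (omitted) argument: the paper explicitly says the proofs in \S\ref{sec-dualrealizations} are parallel to those given earlier in \S\ref{sec-duality}, and your reduction to Lemma~\ref{lem-maxminadjoint}, to \eqref{eq-duality} and \eqref{eq-duality2} of Proposition~\ref{prop-basic}, and to the $\star$/$\sigma$ identities \eqref{eq-starstar}--\eqref{eq-sigmastar}, capped by the anti-unitary adjoint computation $(D^\vee)^*=\star_{E^*}^{-1}D\,\star_E^{-1}$ together with $D^{**}=D$, is exactly that parallel argument. One small caution: the $\sigma_E$-invariance of $\dm(D^*)$ that your domain identification uses is not automatic for an arbitrary operator sandwiched between $\dbar_{c,E}$ and $\dbar_E$ (one can construct closed realizations whose domains mix bidegrees), so it is cleaner to record that realizations here are implicitly bidegree-respecting, which yields the gradedness you invoke.
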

We can refer to $D^\vee$ as the realization of $\dbar_{E^*}$ dual to the realization $D$ of $\dbar_E$.
From now on we will assume that the realization $D$  of the  $\dbar_E$ operator is {\em closed.} Note that then
$\ker(D)$ is a closed subspace of $L^2_*(\Omega,E)$.

We define the cohomology groups of the bundle $E$, with respect to the (closed) realization $D$ as
\[ H^{p,q}_{L^2}(\Omega,E;D) =\frac{ \ker(D)\cap L^2_{p,q}(\Omega,E)}{\im(D)\cap L^2_{p,q}(\Omega,E)}.\]
This becomes a Hilbert space if $\im(D)$ is closed in $L^2_{p,q}(\Omega,E)$

Then, we can state the following generalized version of Serre duality, with exactly the same proof:

\begin{theorem}
\label{thm-serre2} The following are equivalent for a closed realization $D$ of $\dbar_E$:\\
(1) the two operators
\[ L^2_{p,q-1}(\Omega,E)\stackrel{D}\dashrightarrow L^2_{p,q}(\Omega,E)\stackrel{D}\dashrightarrow L^2_{p,q+1}(\Omega,E)\]
have closed range.

(2) the map $\star_E: L^2_{p,q}(\Omega,E)\rightarrow L^2_{n-p,n-q}(\Omega,E^*)$ induces
a   conjugate-linear
isomorphism of the cohomology  Hilbert space $H^{p,q}_{L^2}(\Omega,E;D)$ with $H^{n-p,n-q}_{L^2}(\Omega,E^*;D^\vee)$

\end{theorem}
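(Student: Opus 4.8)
The plan is to run the proof of Theorem~\ref{thm-serre} essentially verbatim, replacing the pair $(\dbar_E,\dbar_{c,E^*})$ by the pair $(D,D^\vee)$ throughout, and supplying abstract analogues of the three ingredients on which that proof rests: a Hodge-star isomorphism of harmonic spaces, Kodaira's identification of harmonic forms with cohomology, and the transfer of the closed-range property across $\star_E$. Concretely, I would set up the commutative square
\[\begin{CD}
\mathcal{H}_{p,q}(\Omega,E;D) @>\star_E>> \mathcal{H}_{n-p,n-q}(\Omega,E^*;D^\vee)\\
@V{\eta}VV  @V{\eta^\vee}VV\\
H^{p,q}_{L^2}(\Omega,E;D) @>\tau>> H^{n-p,n-q}_{L^2}(\Omega,E^*;D^\vee),
\end{CD}\]
where $\eta,\eta^\vee$ are the natural projections of harmonic forms into cohomology and $\tau=\eta^\vee\circ\star_E\circ\eta^{-1}$, and then show that $\tau$ is a conjugate-linear isomorphism precisely when both vertical arrows are, which in turn is governed by the closed-range hypothesis.

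First I would fix the framework for a general closed realization. Writing $\Box_D=D^*D+DD^*$ on its natural domain, one sets $\mathcal{H}_{p,q}(\Omega,E;D)=\ker(\Box_D)\cap L^2_{p,q}(\Omega,E)=\ker(D)\cap\ker(D^*)\cap L^2_{p,q}(\Omega,E)$, and similarly for $D^\vee$. The abstract analogue of Theorem~\ref{thm-babyduality} is that $\star_E$ restricts to a conjugate-linear isomorphism $\mathcal{H}_{p,q}(\Omega,E;D)\cong\mathcal{H}_{n-p,n-q}(\Omega,E^*;D^\vee)$. Since $D^\vee=\star_E D^*\star_{E^*}$ and $\star_E,\star_{E^*}$ are conjugate-linear self-isometries with $\star_{E^*}\star_E=\sigma_E$, one computes $D^\vee\star_E=\star_E D^*\sigma_E$, so on a homogeneous $f$ of bidegree $(p,q)$ we get $D^\vee(\star_E f)=(-1)^{p+q}\star_E(D^*f)$; hence $\star_E(\ker D^*)=\ker D^\vee$. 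Taking Hilbert-space adjoints (the analogue of \eqref{eq-duality}, \eqref{eq-duality2}) gives $(D^\vee)^*=\star_{E^*}D\star_E$ up to the harmless sign carried by $\sigma$, whence $\star_E(\ker D)=\ker (D^\vee)^*$. Together these show that $\star_E$ carries $\ker D\cap\ker D^*$ isometrically and bijectively onto $\ker D^\vee\cap\ker (D^\vee)^*$, which is the desired isomorphism \eqref{eq-babyserre} for $D$.

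Next I would transcribe Lemma~\ref{lem-kodaira} and Lemma~\ref{lem-oplist}. The map $\eta$ is always injective: if $f$ is $D$-harmonic with $[f]=0$, then $f=Dg$ for some $g\in\dm(D)$, and $\norm{f}^2=(Dg,f)=(g,D^*f)=0$ because $D^*f=0$; surjectivity of $\eta$ is equivalent to $\im(D)\cap L^2_{p,q}$ being closed by the same quotient-map boundedness argument used there. For the closed-range list, the equivalence of closed range for $D$ and for $D^*$ is the standard adjoint closed-range theorem, while $\im(D^\vee)=\star_E(\im(D^*))$ is an isometric image, so $D^\vee$ has closed range iff $D^*$ does; chaining these degree by degree yields the analogue of Lemma~\ref{lem-oplist} linking $D$ in bidegrees $(p,q)$ and $(p,q+1)$ with $D^\vee$ in the complementary bidegrees.

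With these in hand the assembly is formal: the top arrow of the square is an isomorphism, both verticals are injective, and $\tau$ is a well-defined isomorphism iff both verticals are surjective, i.e.\ iff $\im(D)$ on $(p,q)$-forms and $\im(D^\vee)$ on $(n-p,n-q)$-forms are closed. By the closed-range list the latter is equivalent to $\im(D)$ on $(p,q+1)$-forms being closed, so the two conditions together are exactly hypothesis (1). I expect the only genuine subtlety --- the main obstacle --- to lie in the second step: pushing the formal-adjoint relations \eqref{eq-formaladj}, \eqref{eq-formaladj2} through to the Hilbert-space adjoint of $D^\vee=\star_E D^*\star_{E^*}$, since this requires careful bookkeeping of the conjugate-linearity of the Hodge star and of the operators $\sigma_E,\sigma_{E^*}$ when identifying $(D^\vee)^*$ and verifying $(D^\vee)^\vee=D$. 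Once that identity is secured, everything else is a word-for-word repetition of the proofs already given.
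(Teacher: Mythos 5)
Your proposal is correct and takes essentially the same approach the paper intends: the paper states Theorem~\ref{thm-serre2} ``with exactly the same proof'' as Theorem~\ref{thm-serre}, and you supply precisely the transcriptions that this entails --- the $\star_E$-isomorphism of the $D$- and $D^\vee$-harmonic spaces replacing Theorem~\ref{thm-babyduality}, Kodaira's injectivity/surjectivity argument replacing Lemma~\ref{lem-kodaira}, and the transfer of closed range through $D$, $D^*$, $D^\vee$ replacing Lemma~\ref{lem-oplist}, assembled via the same commutative square. The only blemish is a typographical one: in $(D^\vee)^*$ the Hodge-star subscripts should read $\star_E D \star_{E^*}$ rather than $\star_{E^*} D \star_E$ (up to the $\sigma$-signs you already flag), since $(D^\vee)^*$ acts on $E^*$-valued forms; your subsequent identification $\star_E(\ker D)=\ker\left((D^\vee)^*\right)$ is nonetheless correct.
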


We give an example of a closed realization of $\dbar$ which is strictly intermediate between the maximal and minimal realizations. We consider a domain $\Omega$ in  a product Hermitian manifold $\mathcal{M}_1\times \mathcal{M}_2$, such that $\Omega$ is the product of smoothly bounded, relatively compact domains $\Omega_1\Subset \mathcal{M}_1$ and $\Omega_2\Subset\mathcal{M}_2$. We endow  $\Omega$ with the product Hermitian metric derived from $\mathcal{M}_1$ and
$\mathcal{M}_2$.

If $\mathsf{H}_1$ and $\mathsf{H}_2$ are Hilbert spaces, we denote by $\mathsf{H}_1\csor\mathsf{H}_2$ the
{\em Hilbert tensor product} of $\mathsf{H}_1$ and $\mathsf{H}_2$, i.e., the completion of the {\em algebraic}
tensor product $\mathsf{H}_1\tensor \mathsf{H}_2$ under the norm induced by the natural inner product defined on
decomposable tensors by
\[ (x\tensor y, z\tensor w)= (x,  z)_{\mathsf{H}_1}(y, w)_{\mathsf{H}_2},\]
and extended linearly. For details see \cite[$\S$3.4]{weid}.
An example of Hilbert tensor products is the space $L^2_*(\Omega)$ of square integrable forms on
the product Hermitian manifold $\Omega=\Omega_1\times\Omega_2$. In fact,
\[ L^2_*(\Omega) = L^2_*(\Omega_1)\csor L^2_*(\Omega_2),\]
if we make the natural identification $f\tensor g = \pi_1^* f\wedge \pi_2^* g$,
where $\pi_j:\Omega\rightarrow \Omega_j$ is the natural projection.

If $T_1:\mathsf{H}_1\dashrightarrow  \mathsf{H}_1'$ and $T_2:\mathsf{H}_2\dashrightarrow  \mathsf{H}_2'$
are closed densely-defined operators, we can define an operator $T_1\tensor T_2: \dm(T_1)\tensor\dm(T_2)\dashrightarrow
\mathsf{H}_1'\tensor \mathsf{H}_2'$, which on decomposable tensors takes the form $(T_1\tensor T_2)(x\tensor y)= T_1 x \tensor T_2 y$. It is well-known that provided $T_1$ and $T_2$ are closed, the operator $T_1\tensor T_2$ is closable.
The closure, denoted by $T_1\csor T_2$ is a closed densely defined operator from $\mathsf{H}_1\csor \mathsf{H}_2$
to $\mathsf{H}_1'\csor \mathsf{H}_2'$.

We let $\dbar^j:L^2_*(\Omega_j)\dashrightarrow L^2_*(\Omega_j)$
denote the maximal realization  of the $\dbar$ operator acting of $\cx$-valued forms on
$\Omega_j$. Similarly, we let $\dbar^j_c:L^2_*(\Omega_j)\dashrightarrow L^2_*(\Omega_j)$ denote the
minimal realization of the $\dbar$ operator. Consider the operator $D$  on $L^2_*(\Omega)$ defined by
\[ D= \dbar^1\csor I_2 +\sigma_1\csor \dbar^2_c,\]
where $I_2$ is the identity map on $L^2_*(\Omega_2)$ and $\sigma_1$ is the (bounded selfadjoint) operator on $L^2_*(\Omega_1)$
which when restricted to $L^2_{p,q}(\Omega_1)$ is multiplication by $(-1)^{p+q}$.  Using the techniques of \cite{ChaS,Ch}
the following properties of $D$ can be established
\begin{itemize}
\item $D$ is a closed densely-defined operator on $L^2_*(\Omega)$.
\item $D$ is a realization of $\dbar$ on $\Omega$, and it is strictly intermediate between the maximal and the minimal
realization. We may think of $D$ as being the realization which is maximal on the factor $\Omega_1$ and minimal on the factor $\Omega_2$.
\item Suppose that the maximal realization $\dbar^j$ has closed range on $L^2_*(\Omega_j)$  for $j=1$ and $2$. By duality, $\dbar_c^j$ has closed range in $L^2_*(\Omega_j)$ as well. Using either of the methods of proof used in \cite[Theorem~1.1]{ChaS} or \cite[Theorem~1.2]{Ch}, we can conclude that the operator $D$ also has closed range.
Further, we have the K\"{u}nneth formula:
\begin{align}
H^*_{L^2}(\Omega;D) &= H^*_{L^2}(\Omega_1;\dbar^1)\csor H^*_{L^2}(\Omega_2;\dbar^2_c)\nonumber\\
&=H^*_{L^2}(\Omega_1)\csor H^*_{c,L^2}(\Omega_2)\label{eq:kun}\end{align}

\item The dual realization $D^\vee$ is the one which is minimal on $\Omega_1$ and maximal on $\Omega_2$; it can be represented as
\[ D^\vee = \dbar^1_c\csor I_2 +\sigma_1 \csor\dbar^2.\]
Provided $\dbar$ has closed range in each of $\Omega_1$ and $\Omega_2$, the operator $D^\vee$ again has closed range,
and the K\"{u}nneth formula holds:
\begin{align*} H^*_{L^2}(\Omega;D^\vee)&= H^*_{L^2}(\Omega_1;\dbar^1_c)\csor H^*_{L^2}(\Omega_2;\dbar^2).\\
&=H^*_{c,L^2}(\Omega_1)\csor H^*_{L^2}(\Omega_2)
\end{align*}
Suppose that $\dim_\cx\Omega_j=n_j$, and set $n=n_1+n_2=\dim_\cx(\Omega)$. We have by Serre duality,
$H^{n-p,n-q}(\Omega;D^\vee)\cong H^{p,q}(\Omega;D)$ via the map $\star$. Note that this could also be
deduced from the knowledge of  Serre duality on the factors: indeed for each $(p_1,q_1)$, we have
$H^{p_1,q_1}_{L^2}(\Omega_1)\cong H^{n_1-p_1,n_2-q_1}_{c,L^2}(\Omega_1)$, and for each $(p_2,q_2)$ we have
$H^{n_2-p_2,n_2-q_2}_{L^2}(\Omega_2)\cong H^{p_2,q_2}_{c,L^2}(\Omega_2)$. Therefore,
\begin{align*}
H^{n-p,n-q}(\Omega;D^\vee)&=\bigoplus_{\substack{p_1+p_2=p\\q_1+q_2=q}}
\left(H^{n_1-p_1,n_2-q_1}_{c,L^2}(\Omega_1)\csor H^{n_2-p_2,n_2-q_2}_{L^2}(\Omega_2)\right)\\
&\cong\bigoplus_{\substack{p_1+p_2=p\\q_1+q_2=q}} H^{p_1,q_1}_{L^2}(\Omega_1)\csor H^{p_2,q_2}_{c,L^2}(\Omega_2)\\
&=H^{p,q}_{L^2}(\Omega;D).
\end{align*}
\end{itemize}

\section{$\dbar$-closed extension of forms}\label{sec-extension}
In this section, we assume that $\Omega$ is a relatively compact domain in a
Hermitian manifold $X$. We assume that the holomorphic vector bundle $E$ is defined on all of $X$.

\begin{prop}\label{prop-dbarc} Let $\Omega$ be a  relatively compact pseudoconvex domain with Lipschitz boundary
in a Hermitian Stein manifold $X$.
Then a form $f\in\dm(\dbar_{c,E})$ if and only if both $f^0$ and $\dbar(f^0)$ are in
$L^2_*(\Omega,E)$, where $f^0$ denotes the form obtained by extending the form $f$ by 0
on $X\setminus \Omega$. We in fact have $(\dbar_c f)^0=\dbar(f^0)$ in the distribution sense.
\end{prop}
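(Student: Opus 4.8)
The plan is to establish the two implications separately; the forward direction will simultaneously yield the distributional identity $(\dbar_c f)^0=\dbar(f^0)$. Let $\dbar_X$ denote the weak maximal realization of $\dbar_E$ on the ambient manifold $X$. Since the weak maximal realization agrees with $\dbar$ taken in the distribution sense whenever the result is square-integrable, the hypothesis that $f^0$ and $\dbar(f^0)$ lie in $L^2$ is exactly the assertion that $f^0\in\dm(\dbar_X)$, and it is this reformulation I would work with throughout.

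For the forward implication, suppose $f\in\dm(\dbar_{c,E})$ and pick $f_\nu\in\mathcal{D}_*(\Omega,E)$ with $f_\nu\to f$ and $\dbar_E f_\nu\to\dbar_{c,E}f$ in $L^2_*(\Omega,E)$. Because each $f_\nu$ is compactly supported in $\Omega$, its extension $f_\nu^0$ is smooth on all of $X$ and produces no boundary contribution, so $\dbar(f_\nu^0)=(\dbar_E f_\nu)^0$. Extension by zero is an isometry of $L^2_*(\Omega,E)$ into $L^2_*(X,E)$, whence $f_\nu^0\to f^0$ and $\dbar(f_\nu^0)\to(\dbar_{c,E}f)^0$ in $L^2_*(X,E)$. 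As $\dbar_X$ is closed by Lemma~\ref{lem-maxminadjoint}, I conclude $f^0\in\dm(\dbar_X)$ with $\dbar(f^0)=(\dbar_{c,E}f)^0$, which gives both the membership and the stated identity.

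For the converse I would assume $f^0\in\dm(\dbar_X)$ and build an approximation of $f$ in the graph norm by elements of $\mathcal{D}_*(\Omega,E)$. First cover $\overline{\Omega}$ by finitely many holomorphic coordinate charts over which $E$ is trivial, and fix a subordinate partition of unity $\{\chi_j\}$. Multiplication by a smooth scalar function preserves both $\dm(\dbar_X)$ and $\dm(\dbar_{c,E})$ by the Leibniz rule (with $\dbar\chi_j$ bounded), and $(\chi_j f)^0=\chi_j f^0$, so it suffices to place each $\chi_j f$ in $\dm(\dbar_{c,E})$. When the chart lies in $\Omega$ the form $\chi_j f$ has support compact in $\Omega$; in a holomorphic frame $\dbar_E$ is a constant-coefficient operator, hence commutes exactly with a convolution mollifier, and Friedrichs mollification produces forms in $\mathcal{D}_*(\Omega,E)$ converging to $\chi_j f$ in the graph norm.

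The decisive case is a chart $U$ meeting $\partial\Omega$, and here I would use the Lipschitz hypothesis directly. In suitable real coordinates $\Omega\cap U$ is the region lying above the graph of a Lipschitz function, so there is a fixed direction $v$ pointing uniformly into $\Omega$; the translates $\tau_t g(x)=g(x-tv)$ commute with the constant-coefficient operator $\dbar_E$ in the holomorphic frame. Because the boundary slope is bounded, for small $t>0$ the translated support of $(\chi_j f)^0$ sits at distance at least a fixed multiple of $t$ from $\partial\Omega$ inside $U$. Continuity of translation on $L^2$ (the intrinsic and Euclidean norms being equivalent on the relatively compact chart) gives $\tau_t(\chi_j f)^0\to\chi_j f$ and $\dbar(\tau_t(\chi_j f)^0)=\tau_t\dbar((\chi_j f)^0)\to\dbar(\chi_j f)$; mollifying each translate with a small enough parameter then yields, after a diagonal choice, a sequence in $\mathcal{D}_*(\Omega,E)$ converging to $\chi_j f$ in the graph norm. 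Summing over $j$ finishes the argument. I expect the main obstacle to be precisely this boundary step: one must choose the inward direction $v$ so that the \emph{entire} local support is pushed strictly off $\partial\Omega$ with a quantitative gap, which is exactly where the Lipschitz (rather than merely continuous) structure of the boundary enters; by contrast, neither the pseudoconvexity of $\Omega$ nor the Steinness of $X$ seems to play any role in this particular characterization.
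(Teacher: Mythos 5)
Your proof is correct and follows essentially the same route as the paper's: the forward direction via the defining approximating sequence together with extension by zero, and the converse via a partition of unity, small inward translations justified by the Lipschitz graph structure (the paper's one-parameter family $t_\epsilon$ of biholomorphic inward translations), followed by smoothing. The only cosmetic differences are that you obtain the identity $(\dbar_c f)^0=\dbar(f^0)$ from closedness of the maximal realization on $X$ where the paper integrates by parts against $C^1$ test forms, and you replace the paper's appeal to Friedrichs' lemma by exact commutation of mollifiers with the constant-coefficient $\dbar$ in a holomorphic chart and frame --- both valid, and your closing observation that only the Lipschitz hypothesis (not pseudoconvexity or Steinness) is used matches the paper's actual argument.
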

\begin{proof}
By definition, given $f\in \dm(\dbar_{c,E})$, there is a sequence $\{f_\nu\}$ of smooth $E$-valued forms with compact support in $\Omega$ such that $f_\nu\to f$ and $\dbar f_\nu \to \dbar_c f$, both in $L^2_*(\Omega,E)$.
Then clearly $(f_\nu)^0\to f^0$ and $\dbar((f_\nu)^0)\to \dbar f$ in $L^2_*(\Omega)$.  It is also easy to see that  $\dbar((f_\nu)^0)
\to  \dbar ((f)^0)$ in  the distribution sense in $X$.  To see that $\dbar ((f)^0)=(\dbar f)^0$, we  use integration-by-parts (since $\bd\Omega$ is Lipschitz) to have that for any $\phi\in C^1_*(X)$,
\begin{align*} ((\dbar_cf)^0,\phi)_X&=(\dbar f,\phi)_\Omega\\
&=\lim_{\nu\to \infty}(\dbar f_\nu, \phi)_\Omega\\
&=\lim_{\nu\to \infty}(f_\nu, \vartheta \phi)_\Omega \\&
=(f^0, \vartheta \phi)_X\\&=(\dbar ((f)^0),\phi)_X.\end{align*}
This proves the ``only if" part of the result.

Assume now that both $f^0$ and $\dbar(f^0)$ are in
$L^2_*(\Omega,E)$. To show that $f\in \dm(\dbar_{c,E})$, we need to construct a sequence $f_\nu\in \mathcal{D}(\Omega,E)$
which converges in the graph norm corresponding to $\dbar$ to $f$. By a partition of unity, this is a
local problem near each $z\in \mathsf{b}\Omega$, and we can assume that $E$ is a trivial bundle near $z$.
By the assumption on the boundary, for any point $z\in \mathsf{b}\Omega$, there is a neighborhood $\omega$ of $z$ in $X$,
and for $\epsilon\geq 0$, a continuous one parameter family $t_\epsilon$ of biholomorphic maps from $\omega$ into $X$ such that $\Omega\cap\omega$ is compactly contained in $\Omega$, and $t_\epsilon$ converges to the identity map on $\omega$ as $\epsilon\to 0^+$. In local coordinates near $z$, the map $t_\epsilon$ is simply the translation by an amount $\epsilon$ in the inward normal direction. Then we can approximate $f^0$ locally by $f^{(\epsilon)}$, where
\[ f^{(\epsilon)}= (t_\epsilon^{-1})^* f^0\]
is the pullback of $f^0$ by the inverse $t_\epsilon^{-1}$ of $t_\epsilon$. A partition of unity argument now gives a form
$f^{(\epsilon)}\in L^2_*(X,E)$ such that $f^{(\epsilon)}$ is supported inside $\Omega$ and as $\epsilon\to 0^+$,
\[\begin{cases}
 f^{(\epsilon)}\to f^0 &\text{in $L^2_*(X,E)$}\\
\dbar f^{(\epsilon)}\to \dbar f^0 &\text{in $L^2_*(X,E)$}
\end{cases}
\]
Since $\bd\Omega$ is Lipschitz,  we can apply Friedrichs' lemma  (see \cite{Ho1} or Lemma 4.3.2 in \cite{cs}) to the form $f^{(\epsilon)}$  to construct the sequence $\{f_\nu\}$ in $\mathcal{D}(\Omega,E)$.
\end{proof}

\subsection{Use of singular weights}
Let $X$ be any Hermitian manifold, and let $\Omega\Subset X$ be a domain in $X$. We assume that
$\Omega$ is pseudoconvex, and for $z\in \Omega$, let $\delta$ be a distance function on $\Omega$. We will assume that $\delta$ satisfies the strong Oka's lemma:
\begin{equation} i\partial\dbar (-\log \delta)\ge c\omega.
\end{equation} where $c>0$ and $\omega$ is a positive (1,1)-form on $X$.

Such a distance function  always exists  on a Stein manifold.  For example, if $\Omega$ is a pseudoconvex domain  in $\cx^n$, we can take
$\delta(z)$  to be  $\delta_0e^{-t|z|^2}$ where $\delta_0$ is the Euclidean   distance from $z$ to
 to $\mathsf{b}\Omega$ and $t>0$. The distance function $\delta$ is comparable to $\delta_0$.   For each $t>0$,  let $E_t$ denote the trivial line bundle $\cx\times \Omega$
on $\Omega$ with pointwise Hermitian inner product $\langle u,v\rangle_z = (\delta(z))^t u\overline{v}$, where
$u,v\in\cx$ are supposed to be in the fiber over the point $z\in\Omega$. On a Stein manifold, we can take $\delta$ to be $\delta_0e^{-t\phi}$ for sufficiently large $t$,  where $\delta_0$ is the distance function to the boundary with respect to the Hermitian metric on $X$ and $\phi$ is a smooth strictly plurisubharmonic function on $X$.
 In classical terminology of
H\"{o}rmander, this corresponds to the use of the weight function $\phi_t = -t\log \delta$. The dual bundle $(E_t)^*$
with dual metric can be naturally identified with $E_{-t}$, i.e. the weight $t\log\delta$. We will denote
\begin{equation}\label{eq-newnotation} L^2_{p,q}(\Omega,\delta^t)= L^2_{p,q}(\Omega,E_t)\end{equation}
in conformity with the classical notation. Note that for $t>0$, the function $\delta^{-t}$ blows up at the boundary
of $\Omega$. If $t\ge  1$,  a form in $L^2_{p,q}(\Omega,\delta^{-t})$ smooth up to the boundary  {\em vanishes} on the boundary. We have the following:

\begin{prop}\label{prop-old3p2}
Let $\Omega$ be a  relatively compact pseudoconvex domain with Lipschitz boundary
in a Hermitian Stein manifold $X$ of dimension $n\ge 2$.
Suppose that
$f\in L^2_{(p,q)}(\Omega,\delta^{-t})$   for some $t\geq 0$,  where   $0\leq p\leq
n$ and
$1\leq q< n $.
Assuming that (in the sense of distributions)
$\dbar f=0$  in $X$ with $f=0$ outside $\Omega$,
then   there exists  $u_t\in L^2_{(p,q-1)}(\Omega,\delta^{-t})$  with
$u_t=0$ outside $\Omega$ satisfying
$\dbar u_t=f$ in the
distribution sense in
$X$.

  For  $q=n$,  we assume that
    $f$ satisfies
 \begin{equation}\label{eq-3p1}\int_\Omega  f\wedge g=0\qquad \text{ for every } g\in \ker(\dbar)\cap L^2_{(n-p,0)}(\Omega, \delta^{t}),\end{equation}
 the same results holds.
\end{prop}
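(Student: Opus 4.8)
The plan is to reformulate the desired conclusion as the solvability of the $\dbar_c$-equation for the trivial line bundle $E := E_{-t}$ (whose dual is $E^* = E_t$, carrying the weight $\delta^t$), and then to invoke the duality Theorem~\ref{thm-dbarc}. First I would observe, using Proposition~\ref{prop-dbarc}, that the hypotheses on $f$ are in fact statements about the minimal realization: for $1\le q<n$, the assumption that $f$ vanishes outside $\Omega$ and $\dbar f=0$ in $X$ says precisely that $f\in\dm(\dbar_{c,E})$ with $\dbar_{c,E}f=0$, while in the top-degree case $q=n$ (where $\dbar_c$-closedness is automatic) the orthogonality condition \eqref{eq-3p1} is the relevant hypothesis. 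Correspondingly, producing the solution $u_t$ with $u_t=0$ outside $\Omega$ and $\dbar u_t=f$ in $X$ amounts, again by Proposition~\ref{prop-dbarc}, to finding $u_t\in\dm(\dbar_{c,E})$ with $\dbar_{c,E}u_t=f$; that is, to showing $f\in\im(\dbar_{c,E})\cap L^2_{p,q}(\Omega,E)$.

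To apply Theorem~\ref{thm-dbarc} (with its index $q$ replaced by our $q-1$), I must verify its single hypothesis: that the operator
\[ \dbar_{E^*}\colon L^2_{n-p,n-q}(\Omega,\delta^t)\dashrightarrow L^2_{n-p,n-q+1}(\Omega,\delta^t) \]
has closed range. This is the crux of the argument and the step where the singular weight enters. Since $\phi_t=-t\log\delta$ satisfies $i\partial\dbar\phi_t=t\,i\partial\dbar(-\log\delta)\ge tc\,\omega$ by the strong Oka lemma, the weight is strictly plurisubharmonic, and $-\log\delta$ is at the same time a complete exhaustion of the pseudoconvex domain $\Omega$ in the Stein manifold $X$. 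I would therefore apply the Hörmander--Andreotti--Vesentini $L^2$-existence theorem to conclude that every $\dbar$-closed form in $L^2_{n-p,s}(\Omega,\delta^t)$ with $s\ge 1$ is $\dbar$-exact; as the target degree here is $s=n-q+1\ge 1$ for every $1\le q\le n$, this gives $\im(\dbar_{E^*})=\ker(\dbar_{E^*})\cap L^2_{n-p,n-q+1}(\Omega,\delta^t)$, hence closed range.

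With closed range in hand, Theorem~\ref{thm-dbarc} yields that $\im(\dbar_{c,E})\cap L^2_{p,q}(\Omega,E)$ is closed and, more importantly, supplies the membership criterion. For $1\le q<n$ (so that the theorem's index $q-1\ne n-1$, using that $\Omega$ is pseudoconvex in a Stein manifold) the criterion is exactly $\dbar_{c,E}f=0$, which we have already identified with the hypothesis. For $q=n$ the criterion is the orthogonality \eqref{eq-3p1} against $\ker(\dbar_{E_t})\cap L^2_{n-p,0}(\Omega,\delta^t)$, again matching the hypothesis. In either case we obtain $u_t\in\dm(\dbar_{c,E})\subset L^2_{p,q-1}(\Omega,\delta^{-t})$ with $\dbar_{c,E}u_t=f$, and a final appeal to Proposition~\ref{prop-dbarc} translates this into $\dbar u_t=f$ in the distribution sense on $X$ with $u_t$ vanishing outside $\Omega$.

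I expect the main obstacle to be the closed-range (solvability) step for the weighted operator $\dbar_{E_t}$: one must ensure that Hörmander's basic estimate is valid on the \emph{non-complete} domain $\Omega$, which is precisely what the completeness of the weight $-\log\delta$ together with the strict plurisubharmonicity furnished by the strong Oka lemma is designed to guarantee. Verifying this estimate carefully, including the bidegree bookkeeping that keeps the target degree $\ge 1$ across the whole range $1\le q\le n$, is the technical heart of the proof; the remaining steps are essentially a clean bookkeeping of Theorem~\ref{thm-dbarc} and Proposition~\ref{prop-dbarc}.
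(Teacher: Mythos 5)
Your proposal is correct and follows essentially the same route as the paper's own proof: establish closed range of the weighted operator $\dbar_{E_t}$ via H\"ormander's $L^2$ method with the strictly plurisubharmonic weight $-t\log\delta$ furnished by the strong Oka lemma, then combine the $\dbar_c$-solvability criterion of Theorem~\ref{thm-dbarc} (with $q$ shifted to $q-1$, exactly as you do) with the characterization of $\dm(\dbar_c)$ in Proposition~\ref{prop-dbarc}. Your write-up is in fact more explicit about the bidegree bookkeeping and the case split at $q=n$ than the paper's brief proof, but the argument is the same.
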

\begin{proof} Using the notation $E_t$ as in \eqref{eq-newnotation} it follows that for any $t>0$,
the map $\dbar_{E_t^*}$ has closed range in each degree following H\"ormander's $L^2$ method \cite{Ho2} with weights  since the weight function satisfies the strong Oka's lemma (see  \cite{HaS}) This equivalent to the $\dbar$-problem  on the pseudoconvex domain $\Omega$ in the bundle $E_t^*=E_{-t}$, i.e., with plurisubharmonic weight $-t\log \delta$. The result now follows on
combining the solution of the $\dbar_c$ problem as given by Theorem~\ref{thm-dbarc} and the characterization of the $\dbar_c$ operator as given by Proposition~\ref{prop-dbarc}.
\end{proof}

For real $s$, denote by $W^s(\Omega)$ the Sobolev space of functions on $\Omega$ with $s$ derivatives in $L^2$.
Let $W^s_0(\Omega)$ be the space of completion of $C^\infty_0(\Omega)$ functions under $W^s(\Omega)$-norm.

\begin{lemma}\label{lem-old3p3} Let $\Omega$ be a bounded domain with Lipschitz boundary  in $\rl^n$ and let  $\rho$ be a distance function.  For any   $s\ge 0$,
 if $f\in W^s(\Omega)$ and $ \rho^{-s+\alpha}D^\alpha f\in L^2(\Omega)$ for every multi-integer $\alpha$ with $|\alpha|\le s$, then $f\in W^s_0(\Omega)$ and $f^0\in W^s(\rl^n)$ where $f^0$ is the extension of $f$ to be zero outside $\Omega$.
\end{lemma}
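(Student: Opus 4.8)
The plan is to establish the two conclusions in the following order. First I would show that the zero extension $f^0$ lies in $W^s(\rl^n)$ by proving that, as distributions on $\rl^n$, the weak derivatives of $f^0$ up to order $s$ are exactly the zero extensions of the corresponding derivatives of $f$, that is, $D^\alpha(f^0)=(D^\alpha f)^0$ for $\abs{\alpha}\le s$. Granting this, each $D^\alpha(f^0)$ belongs to $L^2(\rl^n)$ because $D^\alpha f\in L^2(\Omega)$, so $f^0\in W^s(\rl^n)$. Once $f^0\in W^s(\rl^n)$ is known, the membership $f\in W^s_0(\Omega)$ follows from the very approximation scheme used in the proof of Proposition~\ref{prop-dbarc}: translate $f^0$ inward by the continuous family $t_\epsilon$ of diffeomorphisms so that the support is pushed strictly into $\Omega$, and then mollify. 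Since $\bd\Omega$ is Lipschitz, Friedrichs' lemma applies and produces a sequence in $C_0^\infty(\Omega)$ converging to $f$ in the $W^s(\Omega)$-norm, which is precisely the definition of $f\in W^s_0(\Omega)$.

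The analytic heart of the argument is the computation of $D^\alpha(f^0)$ and the vanishing of the resulting boundary contributions. Pairing $f^0$ against a test function $\phi\in C_0^\infty(\rl^n)$ and integrating by parts inside $\Omega$, every derivative transferred across $\bd\Omega$ produces a surface integral over $\bd\Omega$ involving the trace of some $D^\beta f$ with $\abs{\beta}\le s-1$. Hence it suffices to show that for each multi-index $\beta$ with $\abs{\beta}\le s-1$ the trace of $D^\beta f$ on $\bd\Omega$ vanishes. Now $D^\beta f\in W^{s-\abs{\beta}}(\Omega)$ with $s-\abs{\beta}\ge 1$, so it has a well-defined $L^2$ trace; moreover the hypothesis applied with $\alpha=\beta$ gives $\rho^{-s+\abs{\beta}}D^\beta f\in L^2(\Omega)$, and since $-s+\abs{\beta}\le -1$ and $\rho$ is small near $\bd\Omega$ we deduce $\rho^{-1}D^\beta f\in L^2$ in a collar of the boundary.

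The key point is then a Hardy-type observation: if $u\in W^1(\Omega)$ and $\rho^{-1}u\in L^2(\Omega)$ on a Lipschitz domain, then the trace of $u$ on $\bd\Omega$ vanishes. Writing a boundary collar in the form $\{(x',t):0<t<\delta\}$ with $t$ comparable to $\rho$, one has
\[ \int_{\text{collar}} \abs{u}^2\,\rho^{-2}\,dV \gtrsim \int_{\bd\Omega}\int_0^\delta \frac{\abs{u(x',t)}^2}{t^2}\,dt\,dS(x').\]
Finiteness of the left-hand side is incompatible with a nonvanishing boundary value, since for a.e.\ $x'$ at which $u(x',0^+)=g(x')\ne 0$ the inner integral diverges like $\int_0^\delta t^{-2}\,dt$; hence the trace $g$ vanishes almost everywhere. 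Applying this to $u=D^\beta f$ for each $\abs{\beta}\le s-1$ gives the vanishing of all the boundary traces, so the integration by parts above yields $D^\alpha(f^0)=(D^\alpha f)^0$ for $\abs{\alpha}\le s$, and therefore $f^0\in W^s(\rl^n)$.

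I expect the main obstacle to be the rigorous justification of this Hardy inequality and of the trace-vanishing on a merely Lipschitz domain, where $\rho$ is only Lipschitz and the collar coordinates are non-smooth; this is exactly where the classical Hardy inequality for Lipschitz domains and a careful collar decomposition must be invoked. A secondary technical point is the case of non-integer $s$, where one reads $\abs{\alpha}\le s$ as $\abs{\alpha}\le\lfloor s\rfloor$, proves the integer statement as above, and recovers the fractional conclusion by interpolating between the consecutive integer orders.
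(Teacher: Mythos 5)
Your integer-order argument is in substance workable: the Hardy-type inequality on a Lipschitz collar does force the traces of $D^\beta f$, $\abs{\beta}\le s-1$, to vanish, after which $D^\alpha(f^0)=(D^\alpha f)^0$ follows by integration by parts, and $f\in W^s_0(\Omega)$ by inward translation plus Friedrichs' lemma. This is a legitimate direct route for integer $s$, where the paper instead simply cites Lions--Magenes. (You would still need to make the collar step rigorous: on a Lipschitz graph the a.e.\ convergence $u(x',t)\to g(x')$ underlying your divergence argument must be taken in the sense of averages or nontangential limits, but that is standard.)

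The genuine gap is the fractional case, which is the entire point of the lemma as the paper uses it (Proposition~\ref{prop-old3p4} and Theorem~\ref{thm-old3p5} need real $s$, and the remark after Proposition~\ref{prop-old3p4} flags precisely the half-integers as delicate). ``Prove the integer statement and interpolate between consecutive integer orders'' is not an argument: interpolation theorems apply to bounded linear operators between interpolation couples, not to implications verified at two integer levels, and an $f$ satisfying the weighted hypotheses at level $s$ need not satisfy them at level $\lceil s\rceil$, so there is nothing to interpolate. What one must do --- and what the paper does --- is identify the set of $f$ satisfying the weighted conditions with an interpolation space and then interpolate the extension-by-zero operator, which is bounded from $W^0(\Omega)$ and from $W^1_0(\Omega)$ (more generally $W^k_0$) into the corresponding spaces on $\rl^n$. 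This is exactly where the critical exponents $s=k+\frac12$ bite: extension by zero is \emph{not} bounded from $W^{1/2}_0(\Omega)=W^{1/2}(\Omega)$ into $W^{1/2}(\rl^n)$, and the lemma survives only because the hypothesis $\rho^{-1/2}f\in L^2(\Omega)$ places $f$ in the strictly smaller Lions--Magenes space $W^{1/2}_{00}(\Omega)$, which \emph{is} the interpolation space of $W^0(\Omega)$ and $W^1_0(\Omega)$; the case $k>0$ then goes by induction. Note that at $s=\frac12$ your integration-by-parts machinery is empty ($\lfloor s\rfloor=0$, no boundary terms to kill), so your proposal never uses the weighted hypothesis at all in the fractional regime. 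Finally, the Lions--Magenes results are stated for smooth boundaries; the paper completes the argument by exhausting the Lipschitz domain by smooth subdomains and a partition of unity --- a transfer step your proposal would also need wherever it leans on the smooth-domain theory, though your direct integer-case computation has the merit of working on Lipschitz domains without it.
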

\begin{proof} When the boundary is smooth and $s$ is an integer,  this is proved in \cite[Chapter~1, Theorem~11,8]{LM}.
We first  note that when $s\leq \frac{1}{2}$, the space $W^s$ and $W^s_0$ are equal (see \cite[Chapter~1,Theorem~11.1]{LM}, or Grisvard \cite{Gr}).   When $s\neq k+\frac 12$, where $k=0, 1, 2 ,\dots$,  the lemma follows from   \cite[Section~11.2 and Theorem~11.4]{LM} for smooth domains.

   To see that when $s=k+\frac 12$ holds, we first prove for $k=0$. Let  $f\in W^{\frac 12}(\Omega)$
and $\rho^{ -\frac 12}f\in L^2(\Omega)$.   We only need to show that $f^0$ is in $W^{\frac 12}(\rl^n)$. Notice that for $0\le s\le \frac 12$,  the extension operator $u\in W^s(\Omega)=W^s_0(\Omega) \to u^0$ is continuous only when $s<\frac 12$, but is not continuous from $W^{\frac 12}(\Omega)$ to $W^{\frac{1}{2}}(\rl^n)$ (see   \cite{LM}).  However,  if $f$ satisfies $\rho^{-\frac 12} f\in L^2(\Omega)$,  then $f\in W^{\frac 12}_{00}(\Omega)$, which is a proper subset of $W^{\frac 12}(\Omega)=W^{\frac 12}_0(\Omega)$ (for definition and properties  of $W^{\frac 12}_{00}$, see  Theorem 11.7, Chapter 1 in \cite{LM}). The extension operator $f\to f^0$ is continuous from $W^s_0(\Omega)$ to $W^s (\rl^n)$ when $s=0$ and $s=1$. Thus from the interpolation theorem, it is continuous from $W^{\frac 12}_{00}(\Omega)$ to    $W^{\frac 12}(\rl^n)$  since  $W^{\frac 12}_{00}(\Omega)$  is the interpolation space of $W^0(\Omega)$ and $W^1_0(\Omega)$.  The case for $k>0$  follows from  induction.

The lemma holds for  Lipschitz domains also  since we can exhaust any Lipschitz domain $\Omega$
by smooth subdomains $\Omega_\nu$ (see Lemma 0.3 in \cite{Sh4}).  This is clear when the domain is star-shaped and the general case follows from using a partition of unity (see    \cite{Gr} for the corresponding properties for Sobolev spaces  on Lipschitz domains).

\end{proof}

Combining Proposition~\ref{prop-old3p2} and Lemma~\ref{lem-old3p3},
we  have the following regularity results on solving $\dbar$ with prescribed support.

\begin{prop}\label{prop-old3p4}  Let $\Omega\subset\subset X$
be a pseudoconvex domain with Lipschitz boundary in a Stein manifold of dimension $n\ge 3$  with a Hermitian metric.
Suppose that     $0\le p\le
n$ and
$1\le q\le  n $
and $f$ is a $(p,q)$-form with $ W^s_{0}(\Omega)\cap L^2(\Omega, \delta^{-2s})$ coefficients, where $s\ge 0$.  We assume  that
\begin{enumerate}
 \item
 for $1\leq q<n$, $f$ satisfies
  $f\in \text{Dom}(\dbar_c)$ and $\dbar_c f=0$,
\item
  for  $q=n$,
    $f$ satisfies
 \begin{equation}
\label{eq-old3p3}\int_\Omega  f\wedge g=0\qquad \text{ for every } g\in \ker(\dbar)\cap L^2_{n-p,0}(\Omega, \delta^{2s}).\end{equation}
  \end{enumerate}
Then    there exists a $(p,q-1)$-form   $u\in L^2_{p,0}(\Omega, \delta^{-2s})\cap \text{Dom}(\dbar_c)$ with $ W^s_0(\Omega) $  coefficients  satisfying $\dbar_c u=f$   in $X$.
\end{prop}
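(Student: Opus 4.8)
The plan is to combine the weighted existence theorem, Proposition~\ref{prop-old3p2}, with a regularity bootstrap that converts the singular weight $\delta^{-2s}$ into a gain of exactly $s$ derivatives, and then to invoke Lemma~\ref{lem-old3p3} to pass from weighted $L^2$ bounds on the solution to membership in $W^s_0(\Omega)$.

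First I would verify that the hypotheses feed into Proposition~\ref{prop-old3p2} at the weight $t=2s$. Because the coefficients of $f$ lie in $W^s_0(\Omega)$, the zero-extension $f^0$ lies in $W^s$, and iterating Hardy's inequality on the Lipschitz domain $\Omega$ gives, for every multi-index $\alpha$ with $|\alpha|\le s$, the weighted bound $\delta^{-(s-|\alpha|)}D^\alpha f\in L^2$, i.e. $D^\alpha f\in L^2_{p,q}(\Omega,\delta^{-2(s-|\alpha|)})$; the case $|\alpha|=0$ is the assumed membership $f\in L^2(\Omega,\delta^{-2s})$. For $1\le q<n$, hypothesis (1) together with Proposition~\ref{prop-dbarc} gives $\dbar f^0=0$ in $X$ with $f^0=0$ outside $\Omega$, which is the hypothesis of Proposition~\ref{prop-old3p2}; for $q=n$, the orthogonality \eqref{eq-old3p3} is exactly the condition \eqref{eq-3p1}. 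Proposition~\ref{prop-old3p2} then yields a solution $u\in L^2_{p,q-1}(\Omega,\delta^{-2s})$, which I take to be of minimal norm, with $u=0$ outside $\Omega$ and $\dbar u=f$ in the distribution sense on $X$; by Proposition~\ref{prop-dbarc} this already places $u\in\dm(\dbar_c)$ with $\dbar_c u=f$.

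The heart of the matter is to upgrade $u$ to $W^s_0(\Omega)$. For integer $s$ I would establish, for each $|\alpha|\le s$, the weighted bound $D^\alpha u\in L^2(\Omega,\delta^{-2(s-|\alpha|)})$ by a difference-quotient argument. Localizing by a partition of unity to holomorphic charts (available since $X$ is Stein), the operator $\dbar$ has constant coefficients, so the difference quotient $w_h=\Delta_h^\alpha u$ satisfies $\dbar w_h=\Delta_h^\alpha f$, whose right-hand side is bounded in $L^2(\Omega,\delta^{-2(s-|\alpha|)})$ uniformly in $h$ by the weighted bounds on $f$, and whose zero-extension is still $\dbar$-closed on $X$. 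Re-solving this equation at the matched weight $t=2(s-|\alpha|)\ge 0$ by Proposition~\ref{prop-old3p2} gives solutions bounded uniformly in $h$; identifying these with the $w_h$ themselves and letting $h\to 0$ produces $D^\alpha u\in L^2(\Omega,\delta^{-2(s-|\alpha|)})$. The case $|\alpha|=s$ gives $u\in W^s(\Omega)$, while the full family $\delta^{-(s-|\alpha|)}D^\alpha u\in L^2$, $|\alpha|\le s$, is precisely the hypothesis of Lemma~\ref{lem-old3p3}. For non-integer $s$ one interpolates between the consecutive integer levels, exactly as in the proof of Lemma~\ref{lem-old3p3}. Applying Lemma~\ref{lem-old3p3} to each coefficient of $u$ then gives $u\in W^s_0(\Omega)$ and $u^0\in W^s(\rl^n)$, which completes the proof since $u\in L^2(\Omega,\delta^{-2s})\cap\dm(\dbar_c)$ with $\dbar_c u=f$ is already in hand.

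I expect the inductive regularity step to be the main obstacle. Since $\bd\Omega$ is only Lipschitz, one cannot call on smooth-boundary regularity of the weighted $\dbar$-Neumann problem, and the gain of $s$ derivatives must be extracted solely from the interplay between the commutation of $\Delta_h^\alpha$ with $\dbar$ and the vanishing at $\bd\Omega$ that the singular weight $\delta^{-2s}$ forces on the solution. The delicate point inside this step is the identification of the re-solved forms with the honest difference quotients $w_h$: their difference lies a priori only in $\ker(\dbar_c)$, and one must show it lies in, and is controlled within, the correct weighted space, which is again where the weight-enforced boundary vanishing, rather than any smoothness of $\bd\Omega$, does the essential work.
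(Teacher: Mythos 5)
Your skeleton is the paper's: the paper proves this proposition precisely by applying Proposition~\ref{prop-old3p2} at the weight $t=2s$ (after using Proposition~\ref{prop-dbarc} to pass between $\dbar_c f=0$ and $\dbar f^0=0$ in $X$) and then invoking Lemma~\ref{lem-old3p3} to conclude $u\in W^s_0(\Omega)$. What is missing in your write-up is the bridge between these two ingredients, namely the weighted interior bounds $\delta^{-(s-\abs{\alpha})}D^\alpha u\in L^2(\Omega)$ for $\abs{\alpha}\leq s$, and your difference-quotient scheme for producing them has a genuine gap at exactly the point you flag. For $q-1\geq 1$ the equation $\dbar_c u=f$ has an infinite-dimensional affine solution set, and Proposition~\ref{prop-old3p2} only asserts existence of \emph{some} solution of the quotiented equation at the lower weight; nothing identifies that solution with $w_h=\Delta_h^\alpha u$, and a minimal-norm selection does not commute with translations, since neither $\Omega$ nor $\delta$ is translation invariant. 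Worse, the translated data does not satisfy the hypotheses of Proposition~\ref{prop-old3p2} at all: $\Delta_h f^0$ and $\Delta_h u^0$ are supported in $\overline{\Omega}\cup(\overline{\Omega}+h)$, not in $\overline{\Omega}$, so the required vanishing outside $\Omega$ fails; and bounding $\Delta_h^\alpha f$ in $L^2(\Omega,\delta^{-2(s-\abs{\alpha})})$ uniformly in $h$ forces you to difference the weight $\delta$ itself, which is only Lipschitz, producing error terms of size $\delta^{-1}u$ at precisely the critical order. Two further unaddressed points: for $q=n$ the moment condition \eqref{eq-3p1} at the lower weights must be re-verified for the quotiented data; and your ``iterated Hardy inequality'' step is false at the exceptional exponents — as the paper's remark after the proposition notes, $f\in W^s_0(\Omega)$ implies $f\in L^2(\Omega,\delta^{-2s})$ only when $s-\frac{1}{2}$ is not a non-negative integer (this is the $W^{1/2}_0$ versus $W^{1/2}_{00}$ distinction in Lemma~\ref{lem-old3p3}), which is exactly why $L^2(\Omega,\delta^{-2s})$ appears as a separate hypothesis; the intermediate bounds $\delta^{-(s-\abs{\alpha})}D^\alpha f\in L^2$ can fail the same way.

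The mechanism the paper relies on avoids the identification problem entirely: the solution furnished by the duality (Theorem~\ref{thm-dbarc}) is canonical, of the form $u=-\star_{(2s)}\dbar N_{2s}(\star_{(-2s)}f)$ as written in the proof of Theorem~\ref{thm-old3p5}, so in addition to $\dbar u=f$ it satisfies the weighted adjoint equation, making $u$ a solution of an overdetermined \emph{elliptic} first-order system. Since the weight $\delta^{\pm 2s}$ is comparable to a constant on Whitney balls of radius $\sim\delta(x)$, scaled interior elliptic estimates on such balls, summed over a Whitney decomposition, yield exactly the family $\delta^{-(s-\abs{\alpha})}D^\alpha u\in L^2(\Omega)$, $\abs{\alpha}\leq s$ — the hypotheses of Lemma~\ref{lem-old3p3} — with no boundary regularity for the Lipschitz domain and no comparison of solutions at different weights ever needed. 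This is the sense in which, as the introduction says, the singular weight gives the regularity ``much more directly''; your bootstrap, by re-solving at each differentiation step, reintroduces the non-uniqueness that the canonical choice was designed to eliminate.
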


We remark that when $s-\frac{1}{2}$ is not a non-negative integer, the assumption $f\in W^{s}_0(\Omega)$ implies that $f\in L^2(\Omega, \delta^{-2s})$ (see \cite{LM}). The pairing in \eqref{eq-old3p3} is well-defined between the two spaces  $L^2(\Omega,  \delta^{2s})$ and $L^2(\Omega, \delta^{-2s})$.

\begin{theorem}\label{thm-old3p5}Let $X$ be a Stein manifold and let $\Omega\subset \subset X$
be a relatively compact pseudoconvex  domain with Lipschitz boundary. Let $\Omega^+=X\setminus \Omega$.

Then for any $f\in W^s_{p,q}(\Omega^+)$, where
$q\leq n-2$, with $s\geq 1$ such that $\dbar f=0$ in $\Omega^+$ there exists $F\in W^{s-1}_{p,q}(X)$ with
$F|_{\Omega^+}=f$ and $\dbar f=0$ on $X$.

For $q=n-1$, we assume that
\begin{equation}\label{eq-old3p4} \int_{\bd\Omega} f\wedge g=0\quad\quad \text{for every $g\in \ker(\dbar)\cap L^2_{n-p,0}(\Omega,\delta^{2(s-1)})$},\end{equation}
and the same conclusion holds.
\end{theorem}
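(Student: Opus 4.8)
The plan is to follow the classical recipe for $\dbar$-closed extension: extend $f$ as a Sobolev form across the common boundary $\bd\Omega$, measure the resulting failure of $\dbar$-closedness, and then kill that failure by solving a $\dbar$-equation \emph{with prescribed support} inside $\Omega$, invoking Propositions~\ref{prop-dbarc} and~\ref{prop-old3p4}. Since $\bd\Omega$ is Lipschitz, I would first use a Stein-type extension operator (localized by charts and a partition of unity, extending only across a collar of $\bd\Omega$ and leaving $f$ unchanged on the rest of $\Omega^+$) to produce a form $\tilde f\in W^s_{p,q}(X)$ with $\tilde f|_{\Omega^+}=f$.

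Set $g=\dbar\tilde f$, a $(p,q+1)$-form in $W^{s-1}_{p,q+1}(X)$. Because $\dbar f=0$ on $\Omega^+$ and $\tilde f=f$ there, $g$ vanishes on $\Omega^+$, so $g$ is supported in $\overline{\Omega}$, satisfies $\dbar g=0$ on $X$, and its zero-extension from $\Omega$ obeys $(g|_\Omega)^0=g$. As $g\in W^{s-1}(X)\subseteq L^2$ and $\dbar(g|_\Omega)^0=\dbar g=0\in L^2$, Proposition~\ref{prop-dbarc} gives $g|_\Omega\in\dm(\dbar_{c})$ with $\dbar_{c}(g|_\Omega)=0$.

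I would then solve $\dbar_c u=g|_\Omega$ by applying Proposition~\ref{prop-old3p4} to the $(p,q+1)$-form $g$ with Sobolev index $s'=s-1\ge 0$. For $q\le n-2$ (so that $q+1<n$) the hypothesis $\dbar_{c}(g|_\Omega)=0$ is exactly condition~(1) there. For $q=n-1$ (so that $q+1=n$) I must instead supply the orthogonality condition~(2), namely $\int_\Omega g\wedge h=0$ for every $h\in\ker(\dbar)\cap L^2_{n-p,0}(\Omega,\delta^{2(s-1)})$; this is where the hypothesis \eqref{eq-old3p4} enters. Since such an $h$ is $\dbar$-closed and $\tilde f\wedge h$ is of top degree, Stokes' theorem gives $\int_\Omega g\wedge h=\int_\Omega\dbar(\tilde f\wedge h)=\int_{\bd\Omega}\tilde f\wedge h=\int_{\bd\Omega}f\wedge h=0$, using that the two-sided trace of $\tilde f\in W^s(X)$ on $\bd\Omega$ coincides with the boundary value of $f$. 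Proposition~\ref{prop-old3p4} now yields a $(p,q)$-form $u\in\dm(\dbar_{c})$ with $W^{s-1}_0(\Omega)\cap L^2(\Omega,\delta^{-2(s-1)})$ coefficients and $\dbar_c u=g|_\Omega$. Setting $F=\tilde f-u^0$, where $u^0$ is the zero-extension of $u$, Lemma~\ref{lem-old3p3} gives $u^0\in W^{s-1}(X)$, hence $F\in W^{s-1}_{p,q}(X)$; by Proposition~\ref{prop-dbarc}, $\dbar(u^0)=(\dbar_c u)^0=g$, so $\dbar F=g-g=0$ on $X$; and since $u^0$ vanishes on $\Omega^+$, one has $F|_{\Omega^+}=\tilde f|_{\Omega^+}=f$, as required.

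The main obstacle is not the formal assembly above but the boundary bookkeeping that legitimizes the appeal to Proposition~\ref{prop-old3p4}: one must know that the obstruction form $g=\dbar\tilde f$, which a priori merely vanishes on $\Omega^+$ and lies in $W^{s-1}(X)$, actually has coefficients in $W^{s-1}_0(\Omega)\cap L^2(\Omega,\delta^{-2(s-1)})$. That the zero-extension of $g|_\Omega$ belongs to $W^{s-1}(X)$ forces $g|_\Omega\in W^{s-1}_0(\Omega)$, and the accompanying weighted estimate then follows as in the remark after Proposition~\ref{prop-old3p4} --- \emph{except} at the exceptional half-integer values of $s-1$, where the identification $W^{\sigma}_0=\{u:u^0\in W^\sigma\}$ fails and one must instead pass through the Lions--Magenes spaces $W^{\sigma}_{00}$ and the interpolation argument of Lemma~\ref{lem-old3p3}. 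This matching of traces and weights at half-integer Sobolev exponents, together with the Stokes identity converting the boundary pairing \eqref{eq-old3p4} into the interior orthogonality required by Proposition~\ref{prop-old3p4}, is the only genuinely non-formal content of the proof.
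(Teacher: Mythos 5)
Your proposal is correct and follows essentially the same route as the paper's own proof: extend $f$ across the Lipschitz boundary, observe that $\dbar\tilde f$ is supported in $\overline{\Omega}$ and lies in $W^{s-1}_0(\Omega)\cap L^2(\Omega,\delta^{-2(s-1)})$, solve $\dbar_c u=\dbar\tilde f$ via Proposition~\ref{prop-old3p4} (with the Stokes argument converting \eqref{eq-old3p4} into the interior orthogonality when $q=n-1$), and set $F=\tilde f-u^0$. The only cosmetic difference is that the paper writes the solution explicitly as $T\tilde f=-\star_{(2t)}\dbar N_{2t}(\star_{(-2t)}\dbar\tilde f)$ using the weighted $\dbar$-Neumann operator, while you invoke the proposition abstractly; your attention to the half-integer Sobolev exponents via $W^{\sigma}_{00}$ and interpolation is exactly the issue the paper handles in Lemma~\ref{lem-old3p3}.
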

\begin{proof}
Since $\Omega$ has Lipschitz  boundary, there is a bounded extension
operator from
   $W^s(\Omega^+)$ to $W^s({} X)$ for all $s\ge 0$ (see e.g.
\cite{Gr}).  Let
$\tilde f\in W^{s}_{p,q}({} X)$  be the  extension of
$f$  so that
$ \tilde f|_{ {\Omega}^+  } = f $
with
   $\|\tilde f\|_{W^{s}({} X)}\le
C\|f\|_{W^{s}(\Omega^+)}.$
We have    $\dbar \tilde f\in
  W^{s-1}_0(\Omega)\cap
L^2(\Omega, \delta^{-2(s-1)})$ (see Theorem 11.5 in \cite{LM}).

Obviously we have  that $\dbar\tilde f\in W^{s-1}_0(\Omega)$ is $\dbar$-closed in $\Omega$. When $q=n-1$, $\dbar \tilde f \in W^{s-1}_{p,n}(\Omega)\cap L^2_{p,n}(\Omega, \delta^{-2(s-1)})$ and  satisfies
\begin{equation}\label{eq-old3p5}\int_{\Omega} \dbar\tilde   f\wedge g=\int_{\bd\Omega} f\wedge g=0\qquad \text{ for every } g\in \ker(\dbar)\cap L^2_{n-p,0}(\Omega, \delta^{2(s-1)}).\end{equation}
Notice that both  integrals in \eqref{eq-old3p5} are well-defined by an approximation arguments using
Friedrichs' lemma (see \cite{Ho1} or Lemma 4.3.2 in \cite{cs}).

Let $t=s-1\ge 0$.    We define $T\tilde f$   by
 $T\tilde f = - {\star_{(2t)}\bar\partial N_{2t}
{(\star_{(-2t)}  \dbar \tilde
f)}}$ in
$ \Omega$, where $\star_t= \star_{E_t}$.
      From Proposition~\ref{prop-old3p2} and
Proposition~\ref{prop-old3p4}, we have that there exists  $u=T\tilde f\in
L^2(\Omega,\delta^{-2t})\cap W^{t}_0(\Omega)$ satisfying     $\dbar
(T\tilde f)^0=\dbar \tilde f\quad \text {in }  {} X $.

Define
\[ F=\tilde f-(T\tilde f)^0=\begin{cases} f,&\quad x \in
\overline{\Omega}^+,\\\tilde f-T\tilde f ,&\quad x\in
\Omega.\end{cases}\]
Then from Lemma~\ref{lem-old3p3},  $F\in W^{s-1}_{p,q}({} X)$ and $F$ is a $\dbar$-closed
extension of $f$.\end{proof}
\begin{cor}\label{cor-old3p6} Let $\Omega_1$ and $\Omega$ be two pseudoconvex domains in a Stein manifold $\chi$ with $\Omega\subset\subset \Omega_1\subset\subset \chi$.   Let    $ \Omega^+=\Omega_1\setminus\overline\Omega $ be the  annulus between two pseudoconvex domains $\Omega$ and $\Omega_1$.    For any $f\in W^{s}_{p,q}({\Omega}^+)$, where $0\leq p\leq n$, $1\leq q< n-1$  and
 $s\geq 1$,  such that  $\dbar f = 0$ in $\Omega^+$, there exists
$u\in W^{s}_{(p,q-1)}(\Omega^+)$ with
$\dbar u=f$ in $\Omega^+$. Furthermore, if $f\in C^\infty_{p,q}(\overline \Omega^+)$, we have   $u\in C^\infty_{p,q-1}(\overline \Omega^+)$.

When $q=n$, we assume that $f$ satisfies \eqref{eq-old3p4} instead, then the same result holds.

\end{cor}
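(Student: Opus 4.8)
The plan is to reduce the annulus problem to a $\dbar$-closed extension across the inner boundary followed by a global solution of $\dbar$ on the outer domain. First I would note that, since $\Omega_1$ is pseudoconvex in the Stein manifold $\chi$, it is itself Stein, and $\Omega\Subset\Omega_1$ is a relatively compact pseudoconvex domain with Lipschitz boundary. Taking the ambient manifold of Theorem~\ref{thm-old3p5} to be $\Omega_1$ and the inner domain to be $\Omega$, the annulus $\Omega^+=\Omega_1\setminus\overline{\Omega}$ is precisely the exterior region appearing there, so the theorem applies. Applying it to the $\dbar$-closed form $f\in W^s_{p,q}(\Omega^+)$ produces $F\in W^{s-1}_{p,q}(\Omega_1)$ with $\dbar F=0$ on all of $\Omega_1$ and $F|_{\Omega^+}=f$; in the generic range $1\le q\le n-2$ no condition is needed, while in the borderline top degree the hypothesis \eqref{eq-old3p4} is exactly the moment condition required by the second case of that theorem. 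Since the correction term in the construction of $F$ is supported in $\overline{\Omega}$, the form $F$ genuinely restricts to $f$ on $\Omega^+$.

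The second step is to solve globally and restrict. Since $\Omega_1$ is Stein, $\dbar$ has closed range and the relevant $L^2$ cohomology vanishes for $q\ge 1$, so H\"ormander's $L^2$ theory (with a singular weight $\delta^{-2s}$, as in the proof of Proposition~\ref{prop-old3p2}) yields $v$ on $\Omega_1$ with $\dbar v=F$. Setting $u=v|_{\Omega^+}$ gives $\dbar u=F|_{\Omega^+}=f$ on $\Omega^+$, which already settles the existence of a solution.

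The heart of the matter, and the step I expect to be the main obstacle, is promoting $u$ to the sharp regularity $u\in W^s_{p,q-1}(\Omega^+)$, and, when $f\in C^\infty_{p,q}(\overline{\Omega^+})$, to $u\in C^\infty_{p,q-1}(\overline{\Omega^+})$. Away from the inner boundary this is routine: in the interior of $\Omega^+$ one uses interior elliptic regularity for $\Box$ together with the $\dbar$-Poincar\'e lemma, and near the outer boundary $\bd\Omega_1$ one uses the boundary regularity of the solution operator on the pseudoconvex domain $\Omega_1$, where the datum $F=f$ is of class $W^s$ up to $\bd\Omega_1$. The difficulty is concentrated at the inner boundary $\bd\Omega$, where the global extension $F$ is only of class $W^{s-1}$ and a derivative is lost. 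To recover $W^s$ regularity up to $\bd\Omega$ from the $\Omega^+$ side, I would localize near $\bd\Omega$ and invoke the weighted Sobolev estimates of Proposition~\ref{prop-old3p4} together with Lemma~\ref{lem-old3p3}, which were designed precisely to control this loss in the $\delta^{-2s}$-weighted spaces.

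Finally, the smooth case follows by bootstrapping: running the $W^s$ estimate for every $s\ge 1$ and applying the Sobolev embedding $\bigcap_{s}W^s=C^\infty$ gives $u\in C^\infty_{p,q-1}(\overline{\Omega^+})$ whenever $f\in C^\infty_{p,q}(\overline{\Omega^+})$. The top-degree case is handled identically, with the sole change that the moment condition \eqref{eq-old3p4} replaces the unconditional extendability in the first step.
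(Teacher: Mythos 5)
Your overall architecture --- extend $f$ to a $\dbar$-closed $F$ across $\overline{\Omega}$ via Theorem~\ref{thm-old3p5} applied with ambient manifold $\Omega_1$ (Stein, being pseudoconvex in a Stein manifold), then solve $\dbar v = F$ on the pseudoconvex $\Omega_1$ and restrict to $\Omega^+$ --- is exactly the derivation the paper intends (the corollary is stated without a separate proof, as an immediate consequence of Theorem~\ref{thm-old3p5}), and your existence argument, including the use of the moment condition \eqref{eq-old3p4} in the borderline degree, is fine. The genuine gap is in the regularity step, and in fact you have the location of the difficulty inverted. Near the inner boundary there is no problem at all: $\bd\Omega$ is compactly contained in $\Omega_1$, and the canonical solution $v$ satisfies the elliptic first-order system $\dbar v = F$, $\vartheta v = 0$ in the interior of $\Omega_1$, so interior elliptic regularity gains exactly one derivative and upgrades the global class $W^{s-1}$ of $F$ to $W^s$ on a full neighborhood of $\bd\Omega$; the derivative lost in the extension step is regained for free. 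Moreover, your proposed repair there cannot work as stated: Proposition~\ref{prop-old3p4} and Lemma~\ref{lem-old3p3} concern forms lying in the weighted space $L^2(\Omega,\delta^{-2s})$, i.e.\ forms vanishing on $\bd\Omega$ whose zero extensions remain in $W^s$, and the restricted solution $u = v|_{\Omega^+}$ has no reason to belong to any such space near $\bd\Omega$ --- those tools have already been consumed inside $\Omega$ in the construction of $F$.

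Conversely, the step you dismiss as routine --- $W^s$ regularity up to the outer boundary $\bd\Omega_1$ --- is the delicate one. The datum $F$ is only $W^{s-1}$ on all of $\Omega_1$ (it is $W^s$ only on $\Omega^+$), so any global exact-regularity estimate yields merely $u \in W^{s-1}$ near $\bd\Omega_1$; to do better one needs a pseudolocal $W^s$ estimate for the (weighted) $\dbar$-Neumann solution near the portion of the boundary where the datum is $W^s$. Simply citing ``boundary regularity of the solution operator on the pseudoconvex domain $\Omega_1$'' begs the question when $\bd\Omega_1$ is merely Lipschitz: the paper itself remarks, immediately after this corollary, that solving $\dbar$ with regularity up to the boundary of a pseudoconvex domain is known only for smooth boundaries (Kohn, \cite{Ko2}) or for domains with a Stein neighborhood basis \cite{Du}. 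Nor does your singular weight $\delta^{-2s}$ relative to $\bd\Omega_1$ help here, since $F$ does not vanish on $\bd\Omega_1$ and hence does not lie in $L^2(\Omega_1,\delta^{-2s})$; the singular-weight machinery of $\S$\ref{sec-extension} is adapted to the dual ($\dbar_c$) problem, not to data that are merely $W^s$ up to a pseudoconvex boundary. Finally, a smaller point: your smooth-case bootstrap glosses over the fact that the solution produced for each $s$ depends on $s$, so $\bigcap_s W^s$ applied to a family of varying solutions does not by itself yield a single $u \in C^\infty_{p,q-1}(\overline{\Omega^+})$; one needs a single exact-regularity solution operator, or a correction argument comparing the solutions for different $s$, to pass to the smooth category.
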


 We remark that  Corollary~\ref{cor-old3p6} allows us to solve $\dbar$
smoothly up to the boundary  on pseudoconcave domains with only Lipschitz boundary
provided the compatibility conditions are satisfied. Results of this kind was obtained in \cite {Sh1} for pseudoconcave domains with smooth boundary.  For Lipschitz boundary, see \cite{MS}  or \cite{HI} using integral kernel methods.  This is in sharp contrast of pseudoconvex domains, where solving $\dbar$ smoothly up to the boundary is known only for pseudoconvex domains with smooth boundary (see   \cite{Ko2}) or domains with Stein neighborhood basis (see  \cite{Du}).
 If the boundary $\mathsf{b}\Omega$ is smooth, Theorem~\ref{thm-old3p5} and Corollary~\ref{cor-old3p6}
also hold for $s=0$ (see \cite{Sh2, Sh4}).

\section{Holomorphic extension of CR   forms from the boundary of a complex manifold}

In this section we study holomorphic extension of CR forms from the boundary of a domain  in a complex manifold $X$ using our $L^2$-duality. The use of duality in the study of holomorphic  extension of CR functions with smooth or continuous data is classical (see \cite{serre1}),
and has been studied by many authors (see \cite{serre,kr,hl}.)

In what follows, $X$ is a complex manifold, and $\Omega$ is a relatively compact domain in $X$ with Lipschitz boundary (see
\cite{Sh4} for a general discussion of partial differential equations on Lipschitz domains, and \cite{Sh5} for a discussion of the tangential Cauchy-Riemann equations.)  We will assume that
$X$ has been endowed with a Hermitian metric, and the spaces $L^2_{p,q}(\Omega)= L^2_{p,q}(\Omega,\cx)$ of
square integrable forms are defined with respect to the metric of $X$ restricted to $\Omega$. Observe that
the spaces $L^2_{p,q}(\Omega)$ as well as the Sobolev spaces of forms $W^k_{p,q}(\Omega)$ are defined
independently of the particular
choice of metric on $X$.  Further, it is possible to define Sobolev spaces on the boundary $\bd \Omega$ in such a way
that the usual results on existence of a trace still holds, e.g. functions in $\Omega$ of class $W^1(\Omega)$ have traces
on $\bd\Omega$ of class $W^{\frac{1}{2}}(\bd\Omega)$ (see \cite{jk,ken}.)

The main observation, which follows from the duality results in \S\ref{sec-duality} is the following:

\begin{prop}\label{prop-closedrange}
For any $p$, with $0\leq p\leq n$,
the map
\[ \dbar_{c}: L^2_{p,0}(\Omega)\dashrightarrow L^2_{p,1}(\Omega)\]
has closed range.
\end{prop}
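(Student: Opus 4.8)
The plan is to establish the quantitative estimate $\norm{u}\le C\norm{\dbar_c u}$ for every $u\in\dm(\dbar_c)\cap L^2_{p,0}(\Omega)$ by a compactness argument, since such a lower bound immediately forces $\im(\dbar_c)$ to be closed. The first step is to observe that the kernel $\ker(\dbar_c)\cap L^2_{p,0}(\Omega)$ is trivial. Indeed, if $u$ lies in the minimal domain with $\dbar_c u=0$, choose $u_\nu\in\mathcal{D}_{p,0}(\Omega)$ converging to $u$ in the graph norm; extending by zero across $\bd\Omega$ gives $u_\nu^0\to u^0$ and $\dbar(u_\nu^0)=(\dbar u_\nu)^0\to(\dbar_c u)^0=0$ in $L^2_*(X)$, while $\dbar(u_\nu^0)\to\dbar(u^0)$ in the sense of distributions. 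Hence $\dbar(u^0)=0$ on $X$ (this is exactly the ``only if'' computation of Proposition~\ref{prop-dbarc}, which uses only that $\bd\Omega$ is Lipschitz). By elliptic regularity $u^0$ is then a holomorphic $(p,0)$-form on $X$ vanishing on the nonempty open set $X\setminus\overline{\Omega}$, so by analytic continuation $u^0\equiv0$, i.e. $u=0$.

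The second step is an elliptic estimate. On $(p,0)$-forms one has $\vartheta u=0$, so $\dbar$ is the entire first-order part of the relevant Laplacian; moreover the principal symbol of $\dbar$ in this degree is, up to a factor, exterior multiplication by the $(0,1)$-part of the covector, which is injective on $(p,0)$-forms. Thus $\dbar$ is overdetermined elliptic in this degree, and there is a constant $C$ with $\norm{u}_{W^1(\Omega)}\le C(\norm{\dbar u}+\norm{u})$ for all $u\in\mathcal{D}_{p,0}(\Omega)$, with no boundary contribution since these forms are compactly supported. Applying this to differences $u_\nu-u_\mu$ shows that any graph-norm Cauchy sequence in $\mathcal{D}_{p,0}(\Omega)$ is Cauchy in $W^1(\Omega)$; therefore every $u\in\dm(\dbar_c)\cap L^2_{p,0}(\Omega)$ in fact lies in $W^1_0(\Omega)$, and the estimate $\norm{u}_{W^1(\Omega)}\le C(\norm{\dbar_c u}+\norm{u})$ persists on the whole minimal domain.

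Finally I would run the compactness argument. If $\dbar_c$ failed to have closed range then, because its kernel is trivial, there would exist $u_\nu\in\dm(\dbar_c)\cap L^2_{p,0}(\Omega)$ with $\norm{u_\nu}=1$ and $\norm{\dbar_c u_\nu}\to0$. By the estimate of the previous step the sequence is bounded in $W^1_0(\Omega)$, and since $\Omega$ is a bounded Lipschitz domain the Rellich embedding $W^1_0(\Omega)\hookrightarrow L^2(\Omega)$ is compact; passing to a subsequence, $u_\nu\to u$ in $L^2$ with $\norm{u}=1$. As $\dbar_c$ is closed and $\dbar_c u_\nu\to0$, we obtain $u\in\dm(\dbar_c)$ with $\dbar_c u=0$, contradicting triviality of the kernel. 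Hence $\norm{u}\le C\norm{\dbar_c u}$ holds on all of $\dm(\dbar_c)\cap L^2_{p,0}(\Omega)$, and a closed densely defined operator that is bounded below has closed range.

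The step I expect to be the crux is the elliptic estimate together with the resulting $W^1_0$-membership of minimal-domain forms: this is where the special bottom-degree ($q=0$) structure is essential, since it is precisely the vanishing of $\vartheta$ and the injectivity of the $\dbar$-symbol on $(p,0)$-forms that upgrade control of $\dbar u$ to full first-order control of $u$, with no pseudoconvexity or boundary condition required. The remaining delicate points are the distributional identity $\dbar(u^0)=(\dbar_c u)^0$ across the Lipschitz boundary and the analytic-continuation step, the latter implicitly using connectedness of $X$ (otherwise one argues component by component).
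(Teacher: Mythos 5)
Your proof is correct, but it takes a genuinely different route from the paper. The paper disposes of the statement in three lines: by Lemma~\ref{lem-oplist}, closed range of $\dbar_c$ on $(p,0)$-forms is equivalent to closed range of $\dbar: L^2_{n-p,n-1}(\Omega)\dashrightarrow L^2_{n-p,n}(\Omega)$ in top degree, and the latter is classical --- it is equivalent to solvability of the Dirichlet problem for the Laplace--Beltrami operator, which is known for Lipschitz domains by the cited work of Jerison--Kenig and Kenig. You instead work directly on the minimal realization in bottom degree: triviality of $\ker(\dbar_c)\cap L^2_{p,0}(\Omega)$ via extension by zero and analytic continuation; the overdetermined ellipticity of $\dbar$ on $(p,0)$-forms (injectivity of $\xi^{0,1}\wedge\cdot$ in that degree), giving a uniform interior estimate on compactly supported forms and hence $\dm(\dbar_c)\cap L^2_{p,0}(\Omega)\subset W^1_0(\Omega)$ with the estimate persisting; then Rellich compactness and closedness of $\dbar_c$ to upgrade to $\norm{u}\leq C\norm{\dbar_c u}$, which forces closed range. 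All steps check out, with two small remarks: the identity $\dbar(u^0)=(\dbar_c u)^0$ needs no integration by parts at all (it passes to the distributional limit from $\dbar(u_\nu^0)=(\dbar u_\nu)^0$ for the approximating sequence, so the Lipschitz hypothesis enters your argument only in guaranteeing that $X\setminus\overline{\Omega}$ has nonempty interior in the component containing $\Omega$, so that the continuation argument applies), and compactness of $W^1_0(\Omega)\hookrightarrow L^2(\Omega)$ requires no boundary regularity whatsoever, so your appeal to the Lipschitz condition there is superfluous. Note also that the two arguments are dual to each other: via \eqref{eq-duality} and Lemma~\ref{lem-oplist}, your quantitative bottom-degree estimate is precisely the dual form of the top-degree closed-range statement the paper quotes, so in effect you have reproved the Lipschitz potential-theoretic input rather than citing it. What your route buys: it is self-contained; it yields strictly more than the proposition, namely injectivity of $\dbar_c$ in degree $(p,0)$ (equivalently $H^{p,0}_{c,L^2}(\Omega)=0$), the estimate $\norm{u}\leq C\norm{\dbar_c u}$ with no harmonic space to quotient by, and the regularity statement $\dm(\dbar_c)\cap L^2_{p,0}(\Omega)\subset W^1_0(\Omega)$; and it makes visible that the boundary hypothesis can be weakened. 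What the paper's route buys: brevity, and a proof that sits squarely inside the duality framework the section is meant to illustrate.
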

\begin{proof} Thanks to Lemma~\ref{lem-oplist} this is equivalent to the map
$\dbar: L^2_{n-p,n-1}(\Omega)\dashrightarrow L^2_{n-p,n}(\Omega)$ having closed range.
But it is well-known that $\dbar$ has closed range in this top degree on smooth domains, a fact that
is equivalent to the solvability of the Dirichlet problem for the Laplace-Beltrami operator on such domains (see \cite{fk}.)
For a proof of the solvability of the Dirichlet problem for domains  with Lipschitz boundary, see \cite{jk, ken}.
\end{proof}

Recall that a holomorphic $p$-form is a $\dbar$-closed $(p,0)$-form. We denote the space
of holomorphic $p$-forms on $\Omega$ by $\mathcal{O}_p(\Omega)$.
We deduce a necessary condition for a $(p,0)$-form on $\bd\Omega$ to be the boundary value of a holomorphic $p$-form on
$\Omega$:

\begin{theorem} \label{thm-compatibility}
Let $f\in W^{\frac{1}{2}}_{p,0}(\bd\Omega)$ be a $(p,0)$ form on $\bd\Omega$ with coefficients in the Sobolev
space $W^\frac{1}{2}$. Then the following are equivalent:
\begin{enumerate}
\item There is a holomorphic $p$-form $F\in \mathcal{O}_p(\Omega)\cap W^1(\Omega)$ such that $f=F|_{\bd\Omega}$
\item For all $g\in L^2_{n-p,n-1}(\Omega)\cap \ker(\dbar)$, we have
\begin{equation}\label{eq-5p3}
\int_{\bd\Omega}f\wedge g=0.
\end{equation}
{\em (Note that it is easy to show that a $\dbar$-closed form with $L^2$ coefficients has a trace of class $W^{-\frac{1}{2}}$, and
hence the integral above is well defined.)}
\item For any extension $\tilde{f}\in W^1_{p,0}(\Omega)$ of $f$ to $\Omega$ as a $(p,0)$-form with
coefficients in $W^1$, the form $\dbar\tilde{f}\in L^2_{p,1}(\Omega)$ belongs to the range of $\dbar_c$ on $\Omega$.
\end{enumerate}
\end{theorem}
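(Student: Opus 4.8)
The plan is to prove the equivalence by running the cycle $(1)\Rightarrow(2)\Rightarrow(3)\Rightarrow(1)$, with the step $(2)\Leftrightarrow(3)$ serving as the analytic core and the step $(3)\Rightarrow(1)$ carrying the only serious difficulty. Throughout I would work with the trivial line bundle $E=\cx$, so that $E^*=\cx$ and the earlier duality machinery applies directly. The recurring computational device is the Leibniz rule together with a degree count: for a $(p,0)$-form $\alpha$ and a $(n-p,n-1)$-form $g$, the product $\alpha\wedge g$ has bidegree $(n,n-1)$, so its $\partial$-derivative vanishes and $d(\alpha\wedge g)=\dbar(\alpha\wedge g)=\dbar\alpha\wedge g\pm\alpha\wedge\dbar g$.

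For $(1)\Rightarrow(2)$, given a holomorphic extension $F\in\mathcal{O}_p(\Omega)\cap W^1(\Omega)$ with $F|_{\bd\Omega}=f$ and a form $g\in\ker(\dbar)\cap L^2_{n-p,n-1}(\Omega)$, I would apply Stokes' theorem to $F\wedge g$. By the degree count, $d(F\wedge g)=\dbar F\wedge g\pm F\wedge\dbar g$, which vanishes identically since $\dbar F=0$ (holomorphicity) and $\dbar g=0$. Hence $\int_{\bd\Omega}f\wedge g=\int_\Omega d(F\wedge g)=0$. The boundary pairing is well defined because $f\in W^{\frac12}$ and $g$ has a $W^{-\frac12}$ trace, and the application of Stokes at this regularity is justified by a routine approximation using the trace theory for Lipschitz domains.

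The bridge for $(2)\Leftrightarrow(3)$ is the identity $\int_\Omega\dbar\tilde f\wedge g=\int_{\bd\Omega}f\wedge g$, valid for every $W^1$ extension $\tilde f$ of $f$ and every $\dbar$-closed $g\in L^2_{n-p,n-1}(\Omega)$; it follows from the same degree count, since $d(\tilde f\wedge g)=\dbar\tilde f\wedge g$ when $\dbar g=0$, and Stokes. I would then invoke Theorem~\ref{thm-dbarc} with $E=\cx$ and $q=0$, whose hypothesis is precisely that $\dbar\colon L^2_{n-p,n-1}(\Omega)\dashrightarrow L^2_{n-p,n}(\Omega)$ has closed range, guaranteed by Proposition~\ref{prop-closedrange} together with Lemma~\ref{lem-oplist}. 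That theorem characterizes $\dbar\tilde f\in\im(\dbar_c)\cap L^2_{p,1}(\Omega)$ by the vanishing of $\int_\Omega\dbar\tilde f\wedge g$ against all such $g$. Combining with the boundary identity shows $\dbar\tilde f\in\im(\dbar_c)$ if and only if $\int_{\bd\Omega}f\wedge g=0$ for all $g$; since the latter depends only on $f$ and not on the extension, this establishes $(2)\Leftrightarrow(3)$ and simultaneously shows condition $(3)$ is independent of the extension chosen.

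For $(3)\Rightarrow(1)$, I would pick any $W^1$ extension $\tilde f$ (which exists for $f\in W^{\frac12}(\bd\Omega)$ by the extension theory on Lipschitz domains), use $(3)$ and the closed-range property to solve $\dbar_c u=\dbar\tilde f$ with $u\in\dm(\dbar_c)\cap L^2_{p,0}(\Omega)$, and set $F=\tilde f-u$, so that $\dbar F=0$ and $F$ is a holomorphic $p$-form. Approximating $u$ by compactly supported forms in the graph norm and passing to the limit in the Green identity shows that the weak boundary pairing of $u$ against $W^1$ test forms vanishes, whence $F$ has the same weak boundary value as $\tilde f$, namely $f$. The genuinely hard point, which I expect to be the main obstacle, is to upgrade this to $F\in W^1(\Omega)$ with an honest trace $f\in W^{\frac12}(\bd\Omega)$: on a merely Lipschitz domain the $\dbar_c$-problem itself yields no subelliptic gain. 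The route I would take is to use that the coefficients of the holomorphic form $F$ are harmonic, to identify $F$ with the solution of the Dirichlet problem for the Laplacian with data $f\in W^{\frac12}(\bd\Omega)$ via uniqueness of $L^2$ harmonic functions with prescribed weak trace, and then to invoke the $W^1$-solvability of the Dirichlet problem on Lipschitz domains (\cite{jk,ken}) to conclude $F\in W^1(\Omega)$, which then has classical trace equal to $f$.
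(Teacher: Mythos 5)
Your cycle $(1)\Rightarrow(2)\Rightarrow(3)\Rightarrow(1)$ matches the paper's structure, and the first two legs --- Stokes applied to $F\wedge g$, the extension-independent identity $\int_\Omega\dbar\tilde f\wedge g=\int_{\bd\Omega}f\wedge g$, and the appeal to the characterization \eqref{eq-fwg} of Theorem~\ref{thm-dbarc} with closed range supplied by Proposition~\ref{prop-closedrange} and Lemma~\ref{lem-oplist} --- are exactly the paper's argument. The genuine gap is in your final step $(3)\Rightarrow(1)$, precisely at the point you yourself flag as the main obstacle. Your proposed repair --- ``the coefficients of the holomorphic form $F$ are harmonic, so identify $F$ with the Jerison--Kenig solution of the Dirichlet problem'' --- does not work in the setting of this theorem. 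Here $X$ carries an arbitrary Hermitian metric, with no K\"ahler hypothesis, and for a non-K\"ahler metric a holomorphic function is in general \emph{not} harmonic for the Laplace--Beltrami operator: the identity $\Delta=2\Box$ on functions fails, with first-order torsion terms surviving. Moreover, for $p>0$ ``the coefficients of $F$'' are frame- and chart-dependent; local-coordinate harmonicity (for the Euclidean Laplacian of a chart) cannot be assembled into the single global Dirichlet problem on $\Omega$ to which the $W^1$-solvability results of \cite{jk,ken} apply. Finally, the uniqueness statement you invoke --- that an $L^2$ harmonic function on a Lipschitz domain is determined by its weak trace --- is itself unproved at this regularity, and your preceding step only controls a weak boundary pairing of $u$ against test forms.

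The paper's route closes this gap with one move that your sketch is missing: extension by zero plus interior ellipticity of $\dbar$ in degree $(p,0)$. Since the solution $u$ of $\dbar_c u=\dbar\tilde f$ lies in $\dm(\dbar_c)$, Proposition~\ref{prop-dbarc} gives $\dbar(u^0)=(\dbar u)^0=(\dbar\tilde f)^0\in L^2_{p,1}(X)$ for the extension $u^0$ of $u$ by zero. As $u^0\in L^2$ has compact support in $X$ and $\dbar$ is elliptic on $(p,0)$-forms, elliptic regularity on the ambient manifold yields $u^0\in W^1_{p,0}(X)$; hence $u\in W^1(\Omega)$ up to the boundary, its $W^{\frac{1}{2}}$-trace on the Lipschitz hypersurface $\bd\Omega$ exists and vanishes because $u^0\equiv 0$ on $X\setminus\Omega$, and therefore $F=\tilde f-u$ lies in $\mathcal{O}_p(\Omega)\cap W^1_{p,0}(\Omega)$ with $F|_{\bd\Omega}=f$. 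This simultaneously delivers the $W^1$ regularity and the honest vanishing boundary value of $u$, requiring no Dirichlet-problem theory and no harmonicity of $F$; all the needed tools (Proposition~\ref{prop-dbarc} in particular) were already available to you.
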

 \begin{proof}
$(1\implies 2)$  Let $g\in L^2_{n-p,n-1}(\Omega)\cap\ker(\dbar)$. By Stoke's Theorem:
\[ \int_{\bd\Omega}f\wedge g=\int_\Omega d(F\wedge g)=\int_\Omega  \dbar (F\wedge g)=0.\]

$(2\implies 3)$ First note that such an extension $\tilde{f}$ always exists, since $\bd\Omega$ is Lipschitz. Again let $g\in L^2_{n-p,n-1}(\Omega)\cap\ker(\dbar)$. By Stoke's Theorem
\[ \int_\Omega \dbar\tilde f\wedge g = \int_{\bd\Omega} f\wedge g=0.\]
Assertion (3) now follows from the condition \eqref{eq-fwg} given in Theorem~\ref{thm-dbarc}
for a form to be in the range of the $\dbar_c$ operator.

$(3 \implies 1)$ By Proposition~\ref{prop-closedrange}, $\dbar_c$ has closed range in degree $(p,1)$, and
by hypothesis $\dbar\tilde{f}$ is in the range of $\dbar_c$. By Theorem~\ref{thm-dbarc}, we can solve the equation
\begin{equation}\label{eq-dbarcext} \dbar_c u= \dbar\tilde{f},\end{equation}
with $L^2$ estimates for a $(p,0)$-form $u$. Then $F=\tilde{f}-u$ is holomorphic in $\Omega$. Also, by Proposition~\ref{prop-dbarc}
we have that
\[ \dbar(u^0)=(\dbar u)^0=(\dbar\tilde{f})^0,\]
where the $g^0$ denotes the extension of the form $g$ on $\Omega$ to all of $X$ by setting it equal
to 0 on $X\setminus \Omega$. Since $(\dbar\tilde{f})^0\in L^2_{p,1}(X)$, by elliptic regularity, $u^0\in W^1_{p,0}(X)$.
It follows that $u^0$ has a trace (of class $W^\frac{1}{2}(\bd\Omega)$) on the Lipschitz hypersurface $\bd\Omega$.
Since $u^0$ vanishes identically on $X\setminus \Omega$, it follows that this trace is 0. Consequently,
$F\in W^1_{p,0}(\Omega)$ and satisfies $F|_{\bd\Omega}=f$.
\end{proof}
Let $f$ be a $p$-forms with coefficients  in  $L^1(\bd\Omega)$ which  is the boundary value of a holomorphic
$p$-form  $F \in \mathcal{O}_p(\Omega)$, then $f$ must be CR, i.e, it must  satisfy in the homogeneous tangential Cauchy-Riemann equations
on $\bd\Omega$ in the weak sense, i.e., for each compactly supported smooth $(n-p,n-2)$-form
$\phi\in \mathcal{D}_{n-p,n-2}(X)$,
we have
\begin{equation} \label{eq-5p2}\int_{\bd\Omega} f\wedge \dbar \phi=0.\end{equation}
(See \cite{Sh6} for details.)

It is easy to see that \eqref{eq-5p3} implies \eqref{eq-5p2}. But in general, the two conditions are not equivalent.  One
condition under which they are equivalent is the following:

\begin{cor}\label{cor-5p2} Let $\Omega$ be a domain  with Lipschitz  boundary in a complex
  manifold $X$ of complex dimension $n\geq {2}$. Suppose
that  $ H^{n-p,n-1}_{L^2}(
\Omega)=0$.       Then every  CR  form in  $f\in W^{\frac{ 1}{2}}_{p,0}(\bd\Omega)$  has  a holomorphic extension $F$ to $\Omega$ with $F\in \mathcal{O}_p(\Omega)\cap W^1(\Omega)$ and $F=f$ on $\bd\Omega$.

\end{cor}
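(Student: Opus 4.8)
The plan is to deduce the corollary from the equivalence $(2)\Leftrightarrow(1)$ in Theorem~\ref{thm-compatibility}: it suffices to check that the given CR form $f$ satisfies condition~(2) of that theorem, i.e. that $\int_{\bd\Omega}f\wedge g=0$ for every $g\in L^2_{n-p,n-1}(\Omega)\cap\ker(\dbar)$, after which the theorem itself produces the holomorphic extension $F\in\mathcal{O}_p(\Omega)\cap W^1(\Omega)$. Thus the whole problem is to upgrade the weak (tangential) CR condition \eqref{eq-5p2}, which only tests $f$ against forms $\dbar\phi$ with $\phi\in\mathcal{D}_{n-p,n-2}(X)$, to the stronger orthogonality \eqref{eq-5p3} against \emph{all} $\dbar$-closed $L^2$ forms; this is exactly where the hypothesis $H^{n-p,n-1}_{L^2}(\Omega)=0$ will enter.

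Fix $g\in L^2_{n-p,n-1}(\Omega)\cap\ker(\dbar)$. First I would use the vanishing hypothesis: $H^{n-p,n-1}_{L^2}(\Omega)=0$ says precisely that $\ker(\dbar)\cap L^2_{n-p,n-1}(\Omega)=\im(\dbar)\cap L^2_{n-p,n-1}(\Omega)$, so I may write $g=\dbar\psi$ for some $\psi\in L^2_{n-p,n-2}(\Omega)$ lying in the domain of the (maximal) realization of $\dbar$. The role of this step is to replace the abstract closed form $g$ by an explicitly exact one, so that the CR hypothesis---which only sees exact forms $\dbar\phi$---becomes applicable.

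Next I would recall that the pairing in \eqref{eq-5p3} is defined, for a $\dbar$-closed $L^2$ form $h$, by $\int_{\bd\Omega}f\wedge h=\int_\Omega\dbar\tilde f\wedge h$, where $\tilde f\in W^1_{p,0}(\Omega)$ is any extension of $f$; this is the content of the parenthetical remark in Theorem~\ref{thm-compatibility}, and the right-hand side is manifestly continuous in $h$ in the $L^2$-norm. I would then approximate $\psi$ by forms $\psi_\nu\in\mathcal{C}^\infty_{n-p,n-2}(\overline\Omega)$, smooth up to the boundary, with $\psi_\nu\to\psi$ and $\dbar\psi_\nu\to\dbar\psi=g$ in $L^2$; on the Lipschitz domain $\Omega$ this approximation is furnished by Friedrichs' lemma, exactly as in Proposition~\ref{prop-dbarc} (see \cite{Ho1} or Lemma~4.3.2 in \cite{cs}). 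By the $L^2$-continuity of the pairing, $\int_{\bd\Omega}f\wedge g=\lim_\nu\int_{\bd\Omega}f\wedge\dbar\psi_\nu$, and for each smooth $\psi_\nu$ Stokes' theorem identifies $\int_{\bd\Omega}f\wedge\dbar\psi_\nu$ (now an honest boundary integral) with $\int_\Omega\dbar\tilde f\wedge\dbar\psi_\nu$, using $\dbar\dbar\psi_\nu=0$ and that the trace of $\tilde f$ is $f$.

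Finally I would make each boundary term vanish by invoking the CR hypothesis. Extending $\psi_\nu$ to a compactly supported $\widehat\psi_\nu\in\mathcal{D}_{n-p,n-2}(X)$ with $\widehat\psi_\nu=\psi_\nu$ on $\bd\Omega$, one checks that $\int_{\bd\Omega}f\wedge\dbar\phi$ depends on $\phi$ only through its tangential part: since $f$ has bidegree $(p,0)$ one has $f\wedge\partial\phi=0$ for bidegree reasons, so $f\wedge\dbar\phi=f\wedge d\phi$ and $\int_{\bd\Omega}f\wedge\dbar\phi=\int_{\bd\Omega}\iota^*f\wedge d(\iota^*\phi)$, where $\iota:\bd\Omega\hookrightarrow X$. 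Hence $\int_{\bd\Omega}f\wedge\dbar\psi_\nu=\int_{\bd\Omega}f\wedge\dbar\widehat\psi_\nu=0$ by \eqref{eq-5p2}, so $\int_{\bd\Omega}f\wedge g=0$ and condition~(2) of Theorem~\ref{thm-compatibility} holds. I expect the main obstacle to be the approximation of $\psi$ by forms smooth up to the Lipschitz boundary in the graph norm of $\dbar$, together with the care needed to see that the boundary pairing is tangential, so that the smooth-up-to-$\overline\Omega$ form $\psi_\nu$ may legitimately be replaced by the globally defined test form $\widehat\psi_\nu$; the remaining ingredients are the vanishing hypothesis and a routine passage to the limit.
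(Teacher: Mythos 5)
Your proposal is correct and takes essentially the same route as the paper's proof: use the hypothesis $H^{n-p,n-1}_{L^2}(\Omega)=0$ to write $g=\dbar\psi$ with $\psi$ in the domain of the maximal $\dbar$, approximate $\psi$ by Friedrichs' lemma (valid since $\bd\Omega$ is Lipschitz) by forms in $\mathcal{C}^\infty_{n-p,n-2}(\overline{\Omega})$, extend these to compactly supported test forms on $X$, and kill the boundary pairings via the weak CR condition \eqref{eq-5p2} before concluding with Theorem~\ref{thm-compatibility}. The only difference is that you make explicit two points the paper leaves implicit, namely the $L^2$-continuity of the boundary pairing (via the interior formula $\int_\Omega\dbar\tilde f\wedge h$) and the fact that $\int_{\bd\Omega}f\wedge\dbar\phi$ depends only on the tangential part of $\phi$, which justifies replacing $\psi_\nu$ by its global extension.
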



\begin{proof} Let $g\in \ker(\dbar)\cap L^2_{n-p,n-1}(\Omega) $. By the hypothesis on cohomology, there is a
$u\in\dm(\dbar)\cap L^2_{n-p,n-2}(\Omega)$, such that $\dbar u =g$. Since $\Omega$ is Lipschitz, by Friedrich's lemma, we can find
a sequence $\{u_\nu\}\subset \mathcal{C}^\infty_{n-p,n-2}(\overline{\Omega})$ such that $u_\nu\to u$ in $L^2_{n-p,n-2}(\Omega)$,
and $\dbar u_\nu\to g$ in $L^2_{n-p,n-1}(\Omega)$ as $\nu\to \infty$. Let $\phi_\nu\in\mathcal{D}_{n-p,n-2}(X)$ be a smooth compactly supported extension of the form $u_\nu$ to $X$. Then we have
\[ \int_{\bd\Omega}f\wedge g = \lim \int_{\bd\Omega} f\wedge \dbar\phi_\nu=0.\]
The result now follows by Theorem~\ref{thm-compatibility}.
\end{proof}

Another extension result that can be deduced from Theorem~\ref{thm-compatibility} :

\begin{cor}\label{cor-5p3} Let $\Omega\Subset X$ be  a domain with connected  Lipschitz  boundary in a non-compact connected  complex
 manifold $X$  of complex dimension $n\geq {2}$. Suppose that   there exists a relatively compact  domain $\Omega'$ with Lipschitz boundary such that  $\Omega\Subset \Omega'\Subset X$ and

\begin{equation}\label{eq-5p5} H^{n-p,n-1}_{L^2}(\Omega') =0. \end{equation}
  Then every  CR form of degree $(p,0)$ on $\bd\Omega$ of Sobolev class  $ W^{\frac{1}{2}}(\bd\Omega)$
 has  a holomorphic extension to $\Omega$ (of class $W^1(\Omega)$.)
\end{cor}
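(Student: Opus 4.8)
The plan is to produce the extension by a jump construction that transfers the cohomological vanishing on the larger domain $\Omega'$ to the boundary $\bd\Omega$. First I would choose a compactly supported $W^1$-extension $\tilde f\in W^1_{p,0}(\Omega')$ of $f$, arranged (by cutting off) to vanish near $\bd\Omega'$; this is possible because $\bd\Omega$ is Lipschitz. Then I form the $(p,1)$-form
\[ H = \begin{cases} \dbar\tilde f & \text{on } \Omega^{+}:=\Omega'\setminus\overline{\Omega},\\ 0 & \text{on } \Omega. \end{cases}\]
The first key step is to verify that $H\in\ker(\dbar_{c})\cap L^2_{p,1}(\Omega')$. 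That $H$ is $L^2$ and compactly supported in $\Omega'$ is clear; the substance is that $\dbar H=0$ in the distribution sense. This is precisely where the CR hypothesis \eqref{eq-5p2} enters: testing against $\psi\in\mathcal{D}_{n-p,n-2}(\Omega')$ and applying Stokes' theorem on $\Omega^{+}$ reduces $\int_{\Omega'} H\wedge\dbar\psi$ to a boundary integral over $\bd\Omega$ (the contribution from $\bd\Omega'$ vanishes as $\psi$ is compactly supported), equal up to sign to $\int_{\bd\Omega} f\wedge\dbar\psi$, which is zero because $f$ is CR. Being compactly supported in $\Omega'$ with $\dbar H\in L^2$, $H$ lies in $\dm(\dbar_{c})$.

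The second step is to solve the $\dbar_{c}$-equation on $\Omega'$. By Proposition~\ref{prop-closedrange} together with Lemma~\ref{lem-oplist}, the operator $\dbar: L^2_{n-p,n-1}(\Omega')\dashrightarrow L^2_{n-p,n}(\Omega')$ has closed range, while the hypothesis \eqref{eq-5p5} that $H^{n-p,n-1}_{L^2}(\Omega')=0$ means $\ker(\dbar)=\im(\dbar)$ in degree $(n-p,n-1)$, so that $\im(\dbar)$ is closed there as well. Thus both closed-range conditions of Theorem~\ref{thm-serre} hold on $\Omega'$, and Serre duality identifies $H^{n-p,n-1}_{L^2}(\Omega')$ with the dual of $H^{p,1}_{c,L^2}(\Omega')$; hence $H^{p,1}_{c,L^2}(\Omega')=0$. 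Since $H\in\ker(\dbar_{c})\cap L^2_{p,1}(\Omega')$, there exists $U\in\dm(\dbar_{c})\cap L^2_{p,0}(\Omega')$ with $\dbar_{c}U=H$. By Proposition~\ref{prop-dbarc} the zero-extension $U^0$ satisfies $\dbar(U^0)=H$ on $X$, and since $U^0$ is compactly supported with $\dbar U^0\in L^2$, elliptic regularity yields $U^0\in W^1_{p,0}(X)$.

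The final step, where connectedness is used, shows that $U|_\Omega$ is the desired extension. On $\Omega$ we have $\dbar U=H=0$, so $U$ is holomorphic there. On $\Omega^{+}$ we have $\dbar(\tilde f-U)=\dbar\tilde f-H=0$, so (writing $\tilde f$ for its zero-extension to $X$) the form $G:=\tilde f-U^0$ is holomorphic on $\Omega^{+}$, vanishes on the nonempty open set $X\setminus\overline{\Omega'}$, and is $W^1$ across $\bd\Omega'$; therefore $G$ is $\dbar$-closed, hence holomorphic, on all of $X\setminus\overline{\Omega}$. Because $\bd\Omega$ is connected and $X$ is connected and non-compact, $X\setminus\overline{\Omega}$ is connected, so the identity theorem forces $G\equiv 0$ on $X\setminus\overline{\Omega}$, i.e. $U=\tilde f$ on $\Omega^{+}$. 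Since $U^0\in W^1$ across $\bd\Omega$, the trace of $U$ from the $\Omega$ side agrees with its trace from the $\Omega^{+}$ side, which equals $\tilde f|_{\bd\Omega}=f$. Thus $F:=U|_\Omega\in\mathcal{O}_p(\Omega)\cap W^1(\Omega)$ satisfies $F|_{\bd\Omega}=f$.

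I expect the main obstacle to be the first step, namely checking that the CR condition makes the zero-extension $H$ genuinely $\dbar$-closed across $\bd\Omega$, together with the connectedness argument in the last step, which is what converts a solution of $\dbar_{c}$ on the larger domain $\Omega'$ into an honest holomorphic extension into $\Omega$. One could alternatively try to verify condition (2) of Theorem~\ref{thm-compatibility} directly on $\Omega$; but since the cohomology of $\Omega$ itself is not assumed to vanish, the detour through $\Omega'$ and the jump form $H$ appears to be essential.
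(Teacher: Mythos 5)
Your proof is correct, and it is a mirror image of the paper's argument rather than a literal match. The paper extends $f$ \emph{inward}: it defines $g=\dbar\tilde f$ on $\Omega$ extended by zero across the annulus, solves $\dbar_c u=g$ on $\Omega'$, shows via the identity theorem on the connected set $X\setminus\overline{\Omega}$ that $u^0\equiv 0$ outside $\Omega$ (so the correction $u$ has vanishing trace on $\bd\Omega$), and takes $F=\tilde f-u$. You instead support the jump $H=\dbar\tilde f$ on $\Omega^{+}$, extended by zero into $\Omega$, solve $\dbar_c U=H$, and use the identity theorem on the same connected set to conclude $U=\tilde f$ on $\Omega^{+}$, so that $F=U|_\Omega$ is the extension directly. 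The three pillars are identical in both versions: the CR condition \eqref{eq-5p2} plus Stokes' theorem on the Lipschitz boundary makes the zero-extended jump form weakly $\dbar$-closed (your computation with $\int H\wedge\dbar\psi$ is exactly the dual formulation of the paper's pairing $(g,\vartheta u)$, since $\star$ carries $\mathcal{D}_{p,2}(\Omega')$ onto $\mathcal{D}_{n-p,n-2}(\Omega')$); closed range of $\dbar$ into bidegree $(n-p,n)$ on the Lipschitz domain $\Omega'$ (Proposition~\ref{prop-closedrange} via Lemma~\ref{lem-oplist}) together with \eqref{eq-5p5} feeds Theorem~\ref{thm-serre} and yields $H^{p,1}_{c,L^2}(\Omega')=0$; and Proposition~\ref{prop-dbarc} plus interior elliptic regularity give $U^0\in W^1_{p,0}(X)$ and the single-valued $W^{\frac12}$ trace on $\bd\Omega$. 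What your arrangement costs is the cutoff of $\tilde f$ near $\bd\Omega'$ and the (automatic) check that $G=\tilde f-U^0$ is holomorphic across $\bd\Omega'$ --- automatic because both $\tilde f$ and $H$ vanish near $\bd\Omega'$, so $G=-U^0$ there; what it buys is that the boundary value is recovered immediately from $U=\tilde f$ on the annulus, with no separate trace-zero argument for a correction term. You also correctly isolate where connectedness enters: connected Lipschitz $\bd\Omega$ (each boundary point has exactly one exterior side, so a component boundary is open and closed in $\bd\Omega$) together with $X$ connected and non-compact makes $X\setminus\overline{\Omega}$ connected with the far region nonempty, which is precisely where the paper, too, invokes analytic continuation.
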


\begin{proof} Let $\tilde{f}$ be an extension of $f$ to $\Omega$ (of class $W^1(\Omega)$) and let
\[ g = \begin{cases} \dbar \tilde{f} & \text{on $\Omega$}\\
0 & \text{on $\Omega'\setminus \Omega$}
\end{cases}
\]

We claim that $\dbar g=0$ on $\Omega'$. Indeed, let $u\in\mathcal{D}_{p,1}(\Omega')$ be a
smooth $(p,1)$ form of compact support in $\Omega'$.  We have
\begin{align*}
(\dbar g,u)_{L^2(\Omega')} &= (g,\vartheta u)_{L^2(\Omega')}\\
&=(\dbar\tilde{f},\vartheta u)_{L^2(\Omega)}\\
&=\int_{\Omega}\dbar\tilde{f}\wedge  \star\vartheta u\\
&=\int_{\Omega}\{\dbar(\tilde{f}\wedge \star \vartheta u) - (-1)^p (\tilde{f}\wedge\dbar\star\vartheta u)\}.
\end{align*}
Since $\dbar\star\vartheta= -\dbar\star(\star\dbar\star)= \pm\dbar\dbar\star=0$, the second term vanishes,
and by Stoke's theorem,  the first integral is equal to
\begin{align*}\int_{\bd\Omega}\tilde{f}\wedge \star \vartheta u&=
\pm\int_{\bd\Omega} {f}\wedge (\star\vartheta\star)(\star u)\\&
=\pm\int_{\bd\Omega}{f}\wedge\dbar(\star u),\\
&\text{(since $\dbar=\star\vartheta\star$ on compactly supported forms, see \eqref{eq-formaladj2})}\\
&=0,\\
&\text{(since $f$ is CR, see \eqref{eq-5p2}).}\end{align*}

As $g$ vanishes near $\bd\Omega'$ and $\dbar g=0$, it follows that $g\in\dm(\dbar_c)$ on $\Omega'$ and
$\dbar_c g=0$. Since $\dbar$ has closed range in $\Omega$ for bidegrees $(n-p,n-1)$ as well as $(n-p,n)$ it follows
by duality from  \eqref{eq-5p5} that $H^{p,1}_{c,L^2}(\Omega')=0$. There is then a $u\in \dm(\dbar_c)$ such that
$\dbar_cu =g$.  By Proposition~\ref{prop-dbarc},  the extensions by 0 satisfy $\dbar(u^0)= (\dbar u)^0= g^0$. Since $g^0$
is in $L^2(X)$ it follows that $u^0\in W^1_{p,0}(X)$. Further, $u^0$ is holomorphic on $X\setminus \Omega$ and $u^0\equiv 0$
on $X\setminus\Omega'$. By analytic continuation, $u^0\equiv 0$ on $X\setminus \Omega$. Therefore, the trace of $u$
$\bd\Omega$ vanishes, and the form $F= \tilde{f}-u$ on $\Omega$ is holomorphic, of class $W^1$ and satisfies $F=f$ on
$\bd\Omega$.
\end{proof}
\begin{cor} Let $\Omega$ be  domain with Lipschitz  boundary in a Stein  manifold $X$  of complex dimension $n\geq {2}$. Suppose
that $\bd\Omega$ is connected.
 Then for  every  CR function on $\bd\Omega$ of class $ W^{\frac{1}{2}}(\bd\Omega)$
 has  a holomorphic extension to $\Omega$.
\end{cor}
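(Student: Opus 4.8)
The plan is to deduce this directly from Corollary~\ref{cor-5p3} specialized to $p=0$. A CR function on $\bd\Omega$ is precisely a CR $(0,0)$-form, so once I produce an intermediate domain $\Omega'$ meeting the hypotheses of that corollary, the desired holomorphic extension $F\in\mathcal{O}_0(\Omega)\cap W^1(\Omega)$ follows at once, since $\mathcal{O}_0(\Omega)$ is simply the space of holomorphic functions on $\Omega$. As $X$ is Stein of dimension $n\geq 2$ it is non-compact, and we may assume it connected; together with the standing assumption that $\bd\Omega$ is connected and Lipschitz, this supplies all the ambient hypotheses of Corollary~\ref{cor-5p3}. The only remaining task is to exhibit a relatively compact domain $\Omega'$ with Lipschitz boundary such that $\Omega\Subset\Omega'\Subset X$ and $H^{n,n-1}_{L^2}(\Omega')=0$, i.e. the vanishing \eqref{eq-5p5} with $p=0$.

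First I would construct $\Omega'$ from the Steinness of $X$. Let $\rho\colon X\to\rl$ be a smooth strictly plurisubharmonic exhaustion function, which exists because $X$ is Stein. Since $\Omega$ is relatively compact, $\overline{\Omega}\subset\{\rho<c\}$ for all sufficiently large $c$; choosing such a $c$ to be a regular value of $\rho$, the sublevel set $\Omega'=\{\rho<c\}$ is a smoothly bounded, relatively compact, strictly pseudoconvex domain with $\Omega\Subset\Omega'\Subset X$. In particular $\Omega'$ has (more than) Lipschitz boundary, so it is an admissible enveloping domain for Corollary~\ref{cor-5p3}.

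Next I would verify the cohomological hypothesis. The domain $\Omega'$, being a relatively compact strictly pseudoconvex domain in a Stein manifold, is itself Stein, so by H\"ormander's $L^2$ existence theorem for $\dbar$ (see \cite{Ho2}) every $\dbar$-closed $L^2$ form of bidegree $(p,q)$ with $q\geq 1$ lies in the range of $\dbar$; equivalently $H^{p,q}_{L^2}(\Omega')=0$ for all $q\geq 1$. Since $n\geq 2$ forces $n-1\geq 1$, this yields in particular $H^{n,n-1}_{L^2}(\Omega')=0$, which is exactly condition \eqref{eq-5p5} with $p=0$. Invoking Corollary~\ref{cor-5p3} then produces the holomorphic extension and completes the argument.

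The proof is thus essentially a bookkeeping reduction to Corollary~\ref{cor-5p3}, and there is no serious analytic obstacle beyond what is already packaged into the duality results of $\S$\ref{sec-duality}. The one point that warrants care is matching the vanishing furnished by H\"ormander's theorem with the precise $L^2$-Dolbeault cohomology $H^{n,n-1}_{L^2}$ appearing in \eqref{eq-5p5}: one must check that the closed-range property implicit in treating that quotient as a Hilbert space is indeed supplied by the strict pseudoconvexity of $\Omega'$, so that the hypotheses of Corollary~\ref{cor-5p3} are genuinely satisfied rather than merely formally invoked.
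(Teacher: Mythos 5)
Your proposal is correct and follows essentially the same route as the paper: the paper's proof likewise takes $\Omega'$ to be a strongly pseudoconvex domain with $\Omega\Subset\Omega'\Subset X$, notes $H^{n,n-1}_{L^2}(\Omega')=0$, and invokes Corollary~\ref{cor-5p3}. You merely spell out the standard details the paper leaves implicit (the strictly plurisubharmonic exhaustion producing $\Omega'$, and H\"ormander's $L^2$ estimates supplying both the vanishing and the closed-range property), which is a faithful expansion rather than a different argument.
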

\begin{proof} In the proof of Corollary~\ref{cor-5p3}, we  let $\Omega'$ be some strongly pseudoconvex domain  in $X$  and $\Omega\Subset \Omega'$.  Then    $  H^{n,n-1}_{L^2}(\Omega')= H^{0,1}_{c,L^2}(\Omega') =0$.   The corollary follows.
\end{proof}

 When $X=\cx^n$ and $p=0$, this gives the usual Bochner-Hartogs' extension theorem.
In this case, the extension function can be written explicitly as

\[\label{eqref-5p6} F(z)= \int_{\bd\Omega} B(\zeta,z)\wedge f(\zeta), \quad   z\in \Omega,\]
where $B$ is the Bochner-Martinelli kernel.  The function $F$ has boundary value $f$ as $z$ approaches the boundary (see
\cite{straube} for a proof when the boundary is smooth; in this case we can allow more singular boundary values than
possible in our results with Lipschitz boundaries.)
This is very different from holomorphic extension of  CR functions in complex manifolds which are not Stein.  We will give an example to show that   the extension results on Lipschitz domain is  maximal in the sense that the results might  not hold if the Lipschitz condition is dropped.

We will analyze the holomorphic extension of functions  on a non-Lipschitz domain.
Let $\Omega$ be the  Hartogs' triangle in $\cx\mathbb{P}^2$ defined by
      \[ \Omega=\{[z_0,z_1,z_2]\mid |z_1|<|z_2|\},\]
where $[z_0,z_1,z_2]$ denotes the homogeneous coordinates of a point in $\cx\mathbb{P}^2$.
As usual we endow $\Omega$ with the restriction of the  Fubini-Study metric of $\cx \mathbb{P}^2$.

 \begin{prop} Let $\Omega\subset\cx\mathbb{P}^2$ be the Hartogs' triangle.
Then we have the following:
\begin{enumerate}
\item The Bergman space of $L^2$ holomorphic functions   $L^2(\Omega)\cap\mathcal{O}(\Omega)$
on the domain $\Omega$ separates points in $\Omega$.

\item There exist nonconstant  functions in the space $W^1(\Omega)\cap\mathcal{O}(\Omega)$.
However, this space does not separate points in $\Omega$ and  is not dense in the Bergman space $L^2(\Omega)\cap \mathcal{O}(\Omega)$.
\item  Let $f\in W^2(\Omega)\cap \mathcal{O}(\Omega)$ be a holomorphic function on $\Omega$ which is in the
Sobolev space $ W^2(\Omega)$. Then $f$ is a constant.
  \end{enumerate}
\end{prop}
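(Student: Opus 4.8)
The plan is to pass to affine coordinates, identify $\Omega$ with a concrete product-cone model, and then reduce every assertion to the convergence of explicit radial integrals, using the torus symmetry of the Fubini--Study data to diagonalize all the relevant Hilbert spaces. First I would observe that $\Omega$ is contained in the affine chart $U_2=\{z_2\neq 0\}$, since $z_2=0$ would force $|z_1|<0$. In the coordinates $(w_0,w_1)=(z_0/z_2,\,z_1/z_2)$ one has $\Omega=\{(w_0,w_1):|w_1|<1\}=\cx_{w_0}\times\Delta_{w_1}$, so $\mathcal O(\Omega)\cong\mathcal O(\cx\times\Delta)$. The only boundary point of $\Omega$ on the line $\{z_2=0\}$ is the cone vertex $[1,0,0]$, which in the chart $U_0=\{z_0\neq 0\}$ with coordinates $(u_1,u_2)=(z_1/z_0,z_2/z_0)$ is the origin, and there $\Omega\cap U_0=\{|u_1|<|u_2|\}$. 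Near this point the Fubini--Study metric is smooth and comparable to the Euclidean metric, a fact I will use for the $W^2$ estimate; everywhere else $\bd\Omega$ is the smooth Levi-flat hypersurface $\{|w_1|=1\}$, across which our test functions extend holomorphically, so no regularity is lost there.

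Next I would record two computational inputs. Writing $\rho=1+|w_0|^2+|w_1|^2$, the Fubini--Study volume form is $dV_{FS}=c\,\rho^{-3}\,dV_{\mathrm{Euc}}$, and for holomorphic $f$ the squared gradient is $|\partial f|_g^2=\rho\big(\sum_i|\partial_i f|^2+|\sum_i w_i\partial_i f|^2\big)$, the last factor being $(\deg f)^2|f|^2$ on a monomial by Euler's identity. The torus $T^2$ acting by $(w_0,w_1)\mapsto(e^{i\alpha}w_0,e^{i\beta}w_1)$ is a holomorphic isometry preserving $\Omega$, hence acts isometrically on $L^2(\Omega)$ and on each $W^m(\Omega)$. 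Consequently the isotypic projections are bounded on every $W^m$, the monomials $w_0^kw_1^j$ are mutually orthogonal there, and a holomorphic $f\in W^m$ lies in $W^m$ mode by mode: $f=\sum c_{kj}w_0^kw_1^j$ with $c_{kj}=0$ whenever $w_0^kw_1^j\notin W^m$. Everything thus reduces to deciding which monomials are admissible. Passing to $x=|w_0|^2$, $y=|w_1|^2$ turns each norm into $\int_0^\infty\!\!\int_0^1 x^{a}y^{b}(1+x+y)^{-N}\,dy\,dx$, whose only possible divergence is at $x\to\infty$, that is, at the vertex; comparing with $\int^\infty x^{\,(\text{power})-N}\,dx$ gives $w_0^kw_1^j\in L^2$ iff $k\le 1$, and $w_0^kw_1^j\in W^1$ iff $k=0$ (here the Euler term $\rho\cdot(k+j)^2|f|^2$ against $\rho^{-3}$ produces the binding factor $\rho^{-2}$).

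With these facts the three parts follow quickly. For (1), both coordinate functions $w_0$ (the case $k=1$) and $w_1$ (the case $k=0,\,j=1$) lie in the Bergman space, and together they separate the points of $\cx\times\Delta$. For (2), $w_1\in W^1\cap\mathcal O(\Omega)$ is nonconstant, while the criterion $k=0$ shows $W^1\cap\mathcal O(\Omega)=\{g(w_1):g\in\mathcal O(\Delta)\}$; such functions are independent of $w_0$ and so fail to separate, for instance, $(0,0)$ from $(1,0)$, and since $w_0$ lies in the $k=1$ isotype it is $L^2$-orthogonal to this whole $k=0$ subspace, whence $W^1\cap\mathcal O(\Omega)$ is not dense in the Bergman space. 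For (3), $W^2\cap\mathcal O(\Omega)\subseteq W^1\cap\mathcal O(\Omega)=\{g(w_1)\}$, so it suffices to show $w_1^j\notin W^2$ for every $j\ge 1$; this is precisely the point at which the vertex obstructs regularity.

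The main obstacle is this last $W^2$ step, and here I would argue locally at the vertex rather than compute the full Fubini--Study Hessian. In the chart $U_0$ the function $w_1^j=(u_1/u_2)^j$ is homogeneous of degree $0$ on the cone $\{|u_1|<|u_2|\}$, so its second Euclidean derivatives are homogeneous of degree $-2$; an explicit derivative such as $\partial_{u_2}^2(u_1/u_2)^j=j(j+1)(u_1/u_2)^j u_2^{-2}$ shows $|D^2(w_1^j)|\asymp|u|^{-2}$ on a solid subcone. Since the metric is comparable to Euclidean near the vertex, the $W^2$-seminorm dominates $\int_{|u|<\epsilon}|u|^{-4}\,dV_{\mathrm{Euc}}\asymp\int_0^\epsilon r^{-1}\,dr=\infty$ (in real dimension $4$, $dV\sim r^3\,dr$). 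Hence $w_1^j\notin W^2$ for $j\ge 1$, forcing all nonconstant Fourier modes to vanish and leaving only constants, which proves (3).
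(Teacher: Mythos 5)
Your proof is correct and takes essentially the same route as the paper: both diagonalize via the torus symmetry of the Fubini--Study metric, expand holomorphic functions in the monomials $w_0^kw_1^j$ (the paper's Laurent terms $\left(\frac{z}{w}\right)^\mu w^{-\nu}$ in the chart at $[1,0,0]$ are exactly these functions), and decide membership in $L^2$, $W^1$, $W^2$ by the same weighted radial integrals --- indeed your key step $\partial_{u_2}^2(u_1/u_2)^j=j(j+1)(u_1/u_2)^j u_2^{-2}$ is literally the paper's $\frac{\partial^2}{\partial w^2}$ computation in part (3). One cosmetic slip: the equality $W^1(\Omega)\cap\mathcal{O}(\Omega)=\{g(w_1):g\in\mathcal{O}(\Delta)\}$ should be an inclusion into functions of $w_1$ alone (not every $g\in\mathcal{O}(\Delta)$ yields an element of $W^1(\Omega)$), but only that inclusion is used, and the paper's own formulation has the same character.
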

{\em Remark:}  Statements (1) and (3) above have already been  proved in \cite{HI}.
Regarding (2), we would like to point out a misleading statement made
in that paper, where it  is claimed that  $W^1(\Omega)\cap\mathcal{O}(\Omega)$
consists of constants only (see item 5 in Example 12.1 in \cite{HI}).
\begin{proof} For (1), consider the two holomorphic functions $\frac{z_1}{z_2}$ and $\frac{z_0}{z_2}$
on $\Omega$, which separate points on $\Omega$ and the first of which is bounded (and therefore square-integrable
in the Fubini-Study metric) on $\Omega$.
To see that    $\frac{z_0}{z_2}$ is in    $L^2(\Omega)\cap \mathcal{O}(\Omega)$, we only need  to verify that it is in $L^2(\Omega)$ near the point $[1,0,0]$. We choose coordinate chart $U_0=\{z_0\not=0\}\cap\Omega$ for $\Omega$ with holomorphic coordinates
$(z,w)$, where $z=\frac{z_1}{z_0}$ and $w=\frac{z_2}{z_0}$.  The function $\frac{z_0}{z_2}=w^{-1}$ and it suffices to show that $w^{-1}$ is square-integrable on $\Omega\cap P$  where $P$ is the polydisc $\{\abs{z}<1,\abs{w}<1\}$.
More generally, consider the square-integrability of $w^{-\nu}$, where $\nu\geq 1$ is an integer. We have
\begin{align*} \int_{\Omega\cap P}  \frac 1{|w^\nu|^2}dV&= 4\pi^2 \iint_{r_1<r_2<1}\left(\frac {1}{r_2^{2\nu}}\right)r_2dr_2r_1dr_1\\
&=4\pi^2\int_0^1\left(\int_{r_1}^1 r_2^{-2\nu+1}dr_2\right)r_1dr_1
\end{align*}
When $\nu=1$ the integral becomes
\begin{align*}
\phantom{\int_{\Omega\cap P}  \frac 1{|w^\nu|^2}dV}&=4\pi^2\int_0^1-r_1\log r_1dr_1\\&<\infty.
\phantom{\pi \iint_{r_1<r_2<1}\left(\frac {1}{r_2^{2\nu}}\right)r_2dr_2r_1dr_1}
\end{align*}
If $\nu>1$, the inner integral evaluates to a constant times $(1-r_1^{-2\nu+2})$, the double integral diverges,
and consequently, $w^{-\nu}\not\in L^2(\Omega\cap P)$ (cf. \cite[Proposition~3]{HI}.)

On the subset $\Omega\cap\{z_2\not=0\}$, introduce the coordinates $\widetilde{z}=\frac{z_1}{z_2}$ and
$\widetilde{w}=\frac{z_0}{z_2}$. In these coordinates the set $\Omega\cap\{z_2\not=0\}$ is represented
as the bidisc with one infinite radius $\{(\widetilde{z},\widetilde{w})\mid \abs{\widetilde{z}}<1\}$,
and any function $f\in\mathcal{O}(\Omega)$ has a power series expansion on this polydisc of the
form
\[ f(\widetilde{z},\widetilde{w})=\sum_{\substack{\mu\geq 0\\\nu\geq 0}}C_{\mu,\nu}\widetilde{z}^\mu\widetilde{w}^\nu.\]
In the coordinate patch $\Omega\cap\{z_0\not=0\}$, the natural coordinates are $(z,w)$, where $z=\frac{z_1}{z_0}=\frac{\widetilde{z}}{\widetilde{w}}$ and $w=\frac{z_2}{z_0}=\frac{1}{\widetilde w}$.
Therefore the holomorphic function $f$ on
on $\Omega$ has a Laurent expansion on $\Omega\cap\{z_0\not=0\}$ of the form
\[ f(z,w)= \sum_{\substack{\mu\geq 0\\\nu\geq 0}}C_{\mu,\nu}\left(\frac{z}{w}\right)^\mu w^{-\nu}.\]

By the symmetry of the Fubini-Study metric, it follows that the terms of the series are orthogonal, provided they
are in $L^2(\Omega\cap P)$, and therefore, if $f\in L^2(\Omega\cap P)$,  we have
\[\norm{f}_{L^2(\Omega\cap P)}^2= \sum_{\mu,\nu\geq 0} \abs{C_{\mu,\nu}}^2\norm{\left(\frac{z}{w}\right)^\mu w^{-\nu}}_{L^2(\Omega\cap P)}^2.\]
Since $\frac{z}{w}=\frac{z_1}{z_2}$ is bounded the computation of $\norm{w^{-\nu}}_{L^2}$ in the last paragraph shows that
nonzero terms on the right hand side are not in $L^2$ if $\nu\geq 2$, which means $C_{\mu\nu}=0$ if
$\nu\geq 2$. Thus
each $f\in L^2(\Omega)\cap \mathcal{O}(\Omega)$ has a  Laurent expansion of the form
\begin{equation}\label{eq-laurent}  f(z,w)= \sum_{\substack{\mu\geq 0\\0\leq \nu\leq 1 }}C_{\mu,\nu}\left(\frac{z}{w}\right)^\mu w^{-\nu}.
\end{equation}

Taking a derivative we see that
\[ \frac{\partial f}{\partial w}(z,w)=
\sum_{\substack{\mu\geq 0\\0\leq \nu\leq 1 }}-(\mu+\nu)C_{\mu,\nu}\left(\frac{z}{w}\right)^\mu w^{-(\nu+1)}.\]
By orthogonality of the terms again, if this is in $L^2(\Omega\cap P)$ the coefficients $C_{\mu,1}=0$.  It follows
that any $f\in W^1(\Omega)\cap L^2(\Omega)$ is of the form
\begin{equation}\label{eq-w1} f(z,w)=\sum_{\nu=0}^\infty b_{\nu}\left(\frac{z}{w}\right)^\nu.
\end{equation}
Further, it is easily verified that if $f$ is of the above form then $\frac{\partial f}{\partial z}\in L^2(\Omega)$.
Therefore any holomorphic function in $W^1(\Omega)$ is a function of $\frac zw$ alone,  and it follows that $W^1(\Omega)\cap\mathcal{O}(\Omega)$  does not separate points in $\Omega$. This proves (2).

By taking two derivatives in \eqref{eq-w1}, we obtain
\[ \frac{\partial^2 f}{\partial w^2}(z,w)=
 \sum_{\nu=1}^\infty -\nu(\nu+1)b_{\nu}\left(\frac{z}{w}\right)^\nu\cdot\frac{1}{w^2}.\]
None of the mutually orthogonal terms is in $L^2(\Omega\cap P)$, thanks to the computation of $\norm{w^{-\nu}}_{L^2}$ above.
It follows that $f$ reduces to a constant and we have (3).

\end{proof}

\end{document}